\newtheorem{theorem}{Theorem}[section]
\newtheorem{lemma}[theorem]{Lemma}
\newtheorem{proposition}[theorem]{Proposition}
\newtheorem{claim}[theorem]{Claim}
\newtheorem{maintheorem}{Theorem}
\theoremstyle{definition}
\newtheorem{definition}[theorem]{Definition}
\newtheorem{remark}[theorem]{Remark}
\newtheorem{example}[theorem]{Example}
\newtheorem{construction}[theorem]{Construction}
\numberwithin{equation}{section}
\renewcommand{\setminus}{\smallsetminus}
\newcommand{\epsi}{\varepsilon}
\newcommand{\phiv}{\varphi}
\newcommand{\into}{\hookrightarrow} 
\newcommand{\id}{\mathrm{id}} 
\newcommand{\GL}{\mathrm{GL}} 
\DeclareMathOperator{\embdim}{embdim}
\newcommand{\Gm}{\mathbb{G}_\mathrm{m}}
\newcommand{\al}{\alpha}
\DeclareMathOperator\Aut{Aut}
\DeclareMathOperator{\Hom}{Hom} 
\newcommand{\Ext}{\mathrm{Ext}}
\newcommand{\cExt}{\mathcal{E}xt}
\newcommand{\cHom}{\mathcal{H}om}
\DeclareMathOperator{\Spec}{Spec} 
\DeclareMathOperator{\Proj}{Proj} 
\def\pow#1{ \llbracket  #1 \rrbracket }
\newcommand{\stack}[1]{ \mathcal{M}^\mathrm{Kss}_{#1} } 
\newcommand{\modspace}[1]{M^\mathrm{Kps}_{#1}} 
\newcommand{\art}{(\mathrm{Art}_{\Bbbk})}
\newcommand{\comp}{(\mathrm{Comp}_{\Bbbk})}
\newcommand{\compC}{(\mathrm{Comp}_\mathbb{C})}
\newcommand{\set}{(\mathrm{Set})}
\DeclareMathOperator{\Spf}{Spf}
\newcommand{\Def}[1]{\mathrm{Def}_{#1}}
\newcommand{\grpd}{(\mathrm{Grpd})}
\newcommand{\gDef}[1]{\mathcal{D}ef_{#1}}
\newcommand{\DefqG}[1]{\mathrm{Def}^{\mathrm{qG}}_{#1}}
\newcommand{\TTqG}[2]{\mathbb{T}^{\mathrm{qG}, #1}_{#2}}
\newcommand{\cTqG}[2]{\mathscr{T}^{\mathrm{qG}, #1}_{#2}}
\def\conv#1{\mathrm{conv} \left\{ #1  \right\} } 
\newcommand\cM{\mathcal{M}}
\newcommand\cO{\mathcal{O}}
\newcommand\cU{\mathcal{U}}
\newcommand\cT{\mathscr{T}}
\newcommand\CC{\mathbb{C}}
\newcommand{\KK}{\Bbbk}
\newcommand{\LL}{\mathbb{L}}
\newcommand\NN{\mathbb{N}}
\newcommand\PP{\mathbb{P}}
\newcommand\QQ{\mathbb{Q}}
\newcommand\RR{\mathbb{R}}
\newcommand\TT{\mathbb{T}}
\newcommand\ZZ{\mathbb{Z}}
\newcommand\rH{\mathrm{H}}
\newcommand\rV{\mathrm{V}}
\newcommand{\frakm}{\mathfrak{m}}
\newcommand{\frakp}{\mathfrak{p}}
\newcommand{\frakX}{\mathfrak{X}}
\newcommand{\frakY}{\mathfrak{Y}}
\title[On deformation spaces of toric varieties]{On deformation spaces of toric singularities and on singularities of K-moduli of Fano varieties}
\author{Andrea Petracci}
\address{Dipartimento di Matematica, Universit\`a di Bologna, Piazza di Porta San Donato 5, 40126 Bologna, Italy}
\email{a.petracci@unibo.it}
\begin{document}

   \begin{abstract}
	Firstly, we see that the bases of the miniversal deformations of isolated $\QQ$-Gorenstein toric singularities are quite restricted.
	In particular, we classify the analytic germs of embedding dimension $\leq 2$ which are the bases of the miniversal deformations of isolated $\QQ$-Gorenstein toric singularities.
	
	Secondly, we show that the deformation spaces of isolated Gorenstein toric $3$-fold singularities appear, in a weak sense, as singularities of the K-moduli stack of K-semistable Fano varieties of every dimension $\geq 3$.
	As a consequence, we prove that the number of local branches of the K-moduli stack of K-semistable Fano varieties and of the K-moduli space of K-polystable Fano varieties is unbounded in each dimension $\geq 3$.
\end{abstract}

\maketitle

\section{Introduction}
\label{sec:introduction}

   The setting of this paper is twofold: isolated $\QQ$-Gorenstein toric singularities and K-polystable toric Fano varieties.
   The symbol $\CC$ denotes an algebraically closed field of characteristic $0$.

   \subsection{Deformations of isolated $\QQ$-Gorenstein toric singularities} \label{sec:intro_toric_singularities}

   Studying the bases of miniversal  deformations (aka Kuranishi families) is  important to understand the local properties of moduli spaces.
   Hypersurface singularities, or more generally local complete intersection singularities, have unobstructed deformations, i.e.\ the bases of their miniversal deformations are smooth.
   Isolated quotient singularities of dimension $\geq 3$ are rigid \cite{schlessinger}, so the base of their miniversal deformations is just a point.
   
   On the other hand, more complicated singularities may have almost arbitrarily complicated miniversal deformations: Vakil \cite{vakil} has shown that the formal spectrum of every noetherian complete local  ring which comes via completion and base change from a scheme of finite type over $\ZZ$ is, up to a smooth factor, the base of the miniversal deformation of some isolated normal Cohen--Macaulay $3$-fold singularity. This is the so called \emph{Murphy's law} for versal deformation spaces of isolated normal Cohen--Macaulay $3$-fold singularities.
   
   Toric affine varieties provide normal Cohen--Macaulay singularities which can be much more complicated than local complete intersections or quotient singularities. Nevertheless their combinatorial nature makes the problem of explicitly computing miniversal deformations tractable.
   Thanks to the seminal work by Altmann \cite{altmann_versal} we give a characterisation of noetherian complete local rings with embedding dimension $\leq 2$ which are the hulls of the infinitesimal deformation functor of isolated $\QQ$-Gorenstein toric singularities:

   \begin{maintheorem} \label{thm:main_toric_singularity}
   	Let $R$ be a noetherian complete local  $\CC$-algebra with residue field $\CC$ and with embedding dimension $\leq 2$.
Then the formal spectrum of $R$ is the base of the miniversal deformation of an isolated $\QQ$-Gorenstein toric singularity over $\CC$ if and only if $R$ is isomorphic to (exactly) one of the following $\CC$-algebras:
\begin{enumerate}
	\item[0)] $\CC $,
	\item[1.a)] $\CC [x] / (x^2)$,
	\item[1.b)] $\CC \pow{x}$,
	\item[2.a)] $\CC [x,y]/(x^2, y^2)$,
	\item[2.b)] $\CC [x,y] / (x^2, xy, y^3)$,
	\item[2.c)] $\CC \pow{x,y} / (x^2, xy)$,
	\item[2.d)] $\CC \pow{x,y}$.
\end{enumerate}
   \end{maintheorem}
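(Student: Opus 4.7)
The theorem has two directions: realizability of each listed $R$, and exhaustiveness of the list. The principal tool is Altmann's combinatorial description of the miniversal deformation of an isolated toric Gorenstein $3$-fold singularity in terms of Minkowski decompositions of the defining lattice polygon $Q \subset \RR^2$, combined with the reduction from $\QQ$-Gorenstein to Gorenstein deformations via the index-one cover. Under this reduction the embedding dimension of the miniversal base equals $\dim_\CC T^1_X$, which we can read off combinatorially from $Q$.

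For realizability I would exhibit, for each $R$ on the list, an explicit cone over a suitable lattice polygon. Case $0)$ is any rigid example, such as a smooth point or a cone over a triangle with no non-trivial Minkowski decomposition. The smooth bases $1.b)$ and $2.d)$ arise from cones over polygons (parallelograms, hexagons) admitting a one- or two-parameter family of unobstructed Minkowski decompositions. The reducible-reduced case $2.c)$ is realized by a polygon whose Minkowski fan contains two transverse one-parameter families. The non-reduced cases $1.a)$, $2.a)$, $2.b)$ are the most delicate: one chooses combinatorics so that certain Minkowski-type equations obstruct deformations at precisely the prescribed order, and verifies the resulting ideal using Altmann's explicit presentation of the hull.

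For exhaustiveness, the hypothesis $\embdim R \leq 2$ constrains us to $\dim T^1_X \leq 2$. After reducing to the Gorenstein case, Altmann presents $\Spf R$ as a closed subscheme of $\Spf \CC\pow{x_1,\dots,x_{\dim T^1}}$ cut out by an ideal of binomial Minkowski-type relations of the form $\prod_{i\in I} x_i = \prod_{j \in J} x_j$, indexed by compatibilities between Minkowski summands. For $\dim T^1 \leq 2$ one enumerates all admissible combinatorial configurations of such equations; this gives exactly the seven rings in the statement.

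The main obstacle is the last enumeration: ruling out \emph{a priori} plausible hulls such as $\CC\pow{x}/(x^n)$ for $n\geq 3$, $\CC\pow{x,y}/(xy)$, $\CC\pow{x,y}/(x^2+y^3)$, or $\CC[x,y]/(x^2,y^n)$ for $n\geq 4$ requires a careful case analysis of how binomial Minkowski equations can interact in embedding dimension $\leq 2$, together with a verification that the passage from Gorenstein to $\QQ$-Gorenstein deformations via the index-one cover (with its cyclic group action) does not introduce exotic hulls beyond those appearing in the Gorenstein classification.
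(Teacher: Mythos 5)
Your proposal has the right starting point (Altmann's theorem) but a wrong organizing principle, and it would miss part of the list.

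The paper does \emph{not} reduce $\QQ$-Gorenstein toric singularities to Gorenstein ones via the index-one cover. Instead, it treats the dimension cases separately, using a rigidity dichotomy (Remark~\ref{rem:klaus_rigidity}, a consequence of Altmann's earlier work): every isolated $\QQ$-Gorenstein toric singularity of dimension $\geq 4$ is rigid, and every isolated $\QQ$-Gorenstein \emph{non}-Gorenstein toric $3$-fold singularity is rigid. So the non-trivial contributions come from (a) isolated Gorenstein toric $3$-fold singularities (handled by Altmann's presentation via the polygon $F$), and (b) isolated toric \emph{surface} singularities, i.e.\ cyclic quotient singularities (handled by Riemenschneider's formula for $\dim T^1$). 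Your framework, which funnels everything through the Gorenstein $3$-fold case, omits (b). This is not a small omission: the surface case is the only source of $\CC\pow{x,y}$ (case 2.d). By Proposition~\ref{prop:consequences_thm_altmann}(1), a Gorenstein toric $3$-fold hull of embedding dimension $2$ must satisfy $\dim_\CC(\frakm^2/\frakm^3)=1$, whereas $\CC\pow{x,y}$ has $\dim_\CC(\frakm^2/\frakm^3)=3$; so $\CC\pow{x,y}$ cannot occur as such a hull. Your suggestion to realize $2.d)$ via a cone over a hexagon also fails quantitatively: for a hexagon $m=6$, hence embedding dimension $m-3=3$, not $2$.

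A second concrete issue is your description of the versal ideal as generated by binomial Minkowski-type relations $\prod_{i\in I}x_i = \prod_{j\in J}x_j$. Altmann's presentation (recorded here as Construction~\ref{constr:algebra_A_F}) uses generalized power-sum (Newton-type) generators $\sum_i a_i x_i^k$ and $\sum_i b_i x_i^k$ for $k\geq 1$; these are not binomial, and indeed the resulting algebra need not be a monomial algebra (Example~\ref{ex:esagono_sghembo}). The enumeration in the paper is thus not a combinatorial analysis of binomial configurations but an analysis of the graded algebra $A_F$: since the embedding dimension is $m-3$ (Theorem~\ref{thm:klaus_versal}(2)), the hypothesis $\embdim R\leq 2$ restricts to polygons with $m\leq 5$ edges, i.e.\ triangles, quadrilaterals and pentagons, which are classified directly (Propositions~\ref{prop:triangles_and_quadrilaterals} and~\ref{prop:pentagon}), with the pencil-of-quadrics Lemma~\ref{lem:two_quadrics_in_P^1} as the key algebraic input in the pentagon case. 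Your proposal lacks this reduction to $m\leq 5$, and without it the asserted case analysis of "admissible configurations" is not well posed.
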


Moreover, for arbitrary embedding dimension (not necessarily $\leq 2$) we  find strict necessary conditions that a ring must satisfy in order to be the hull of the deformation functor of an isolated Gorenstein toric $3$-fold singularity (see Proposition~\ref{prop:consequences_thm_altmann}). A consequence is the following:

\begin{maintheorem} \label{thm:main_no_murphys_law_for_toric_singularities}
	For every $n \geq 1$, the ring $\CC \pow{x_1, \dots, x_n} / (x_1^3)$ is not the hull of any isolated $\QQ$-Gorenstein toric singularity over $\CC$.
	Therefore Murphy's law in the sense of Vakil~\cite{vakil} does not hold for isolated $\QQ$-Gorenstein toric singularities.	
\end{maintheorem}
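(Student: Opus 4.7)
The plan is to split on the embedding dimension of $R := \CC\pow{x_1,\ldots,x_n}/(x_1^3)$. Since $R \cong (\CC[x_1]/(x_1^3)) \otimes_\CC \CC\pow{x_2,\ldots,x_n}$, ruling out $R$ as a hull for every $n \geq 1$ is precisely what refutes Vakil's ``up to smooth factor'' formulation of Murphy's law applied to $\CC[x]/(x^3)$, so the second assertion of Theorem~\ref{thm:main_no_murphys_law_for_toric_singularities} follows formally from the first.

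For $n \in \{1, 2\}$ the embedding dimension of $R$ is $n$, and I would appeal directly to Theorem~\ref{thm:main_toric_singularity}. When $n=1$, the ring $\CC[x]/(x^3)$ is distinguished from $\CC[x]/(x^2)$ by its $\CC$-vector space length and from $\CC\pow{x}$ by failure to be a domain. When $n=2$, the ring is infinite-dimensional and not regular, which rules out items $2.a), 2.b), 2.d)$; item $2.c)$ is ruled out by comparing, for instance, $\dim_\CC \frakm^2/\frakm^3$, which equals $3$ for $R$ but only $1$ for $\CC\pow{x,y}/(x^2, xy)$.

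For $n \geq 3$ I would invoke Proposition~\ref{prop:consequences_thm_altmann}, which translates Altmann's combinatorial description of the versal deformations of isolated Gorenstein toric $3$-folds into explicit necessary conditions on a candidate hull. The defining ideal $(x_1^3)$, with a single generator in $\frakm^3$ and no companion in $\frakm^2$, is precisely the wrong shape: Altmann's Minkowski-summand picture of the base produces quadratic obstructions whenever it produces any nontrivial relations, so a ``pure cubic'' obstruction ideal never arises from an isolated Gorenstein toric $3$-fold. To bridge the gap between the $\QQ$-Gorenstein, arbitrary-dimension setting of Theorem~\ref{thm:main_no_murphys_law_for_toric_singularities} and the Gorenstein $3$-fold setting of the Proposition, I would use that isolated $\QQ$-Gorenstein toric singularities of dimension $\geq 4$ are rigid, that the cyclic quotient singularities in dimension $2$ have classically known $\QQ$-Gorenstein deformation hulls none of which match $R$, and that the non-Gorenstein $3$-fold case can be reduced to the Gorenstein one by an index-cover argument.

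The main obstacle I anticipate is the bookkeeping required to extract the concrete ``no pure-cubic obstruction ideal'' criterion from Proposition~\ref{prop:consequences_thm_altmann}; this will hinge on a close reading of Altmann's explicit equations for the versal base. Once that is in hand, the small-$n$ analysis via Theorem~\ref{thm:main_toric_singularity} closes out the remaining cases, and the Murphy's-law consequence follows formally from the family $\{R\}_{n \geq 1}$ exhausting all smooth thickenings of $\CC[x]/(x^3)$.
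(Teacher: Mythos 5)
Your plan is correct in outline for the easy cases but has two genuine gaps, and the case split you choose (on the embedding dimension $n$ of $R$) is less convenient than the paper's (on $\dim X$).

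The first gap is the dimension-$2$ case when $n \geq 3$. You wave at ``classically known $\QQ$-Gorenstein deformation hulls none of which match $R$,'' but the versal bases of cyclic quotient surface singularities are quite elaborate (Christophersen, Stevens) and ``none of which match $R$'' is exactly the thing one has to prove. The paper closes this case with a short structural observation that you are missing: every irreducible component of the versal base of an isolated toric surface singularity is a smoothing component, hence (by openness of versality) generically reduced; but $\Spf\CC\pow{x_1,\dots,x_n}/(x_1^3)$ has a unique irreducible component and it has multiplicity $3$. This argument works uniformly in $n$ and requires no classification. Also note that the theorem concerns the hull of $\Def{X}$, not of $\DefqG{X}$, so the phrase ``$\QQ$-Gorenstein deformation hulls'' is a terminological slip.

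The second gap is the non-Gorenstein $3$-fold case. You propose an ``index-cover argument'' to reduce it to the Gorenstein one, but $\Def{X}$ is governed by $\bmu_r$-equivariant deformations of the canonical cover, and the constraint of Proposition~\ref{prop:consequences_thm_altmann}(1) on hulls of Gorenstein $3$-folds does not transfer to the equivariant subfunctor in any evident way; this route is both harder and unnecessary. The paper instead invokes Remark~\ref{rem:klaus_rigidity}(2), a consequence of Altmann's work: every isolated $\QQ$-Gorenstein \emph{non-}Gorenstein toric $3$-fold singularity is rigid, so its hull is $\CC$, which is never isomorphic to $R$. Once both gaps are filled this way, the remaining Gorenstein $3$-fold case is indeed handled by Proposition~\ref{prop:consequences_thm_altmann}(1) -- and more cleanly than you anticipate: for $d \geq 2$ the proposition forces $\dim_\CC \frakm^2/\frakm^3 = (d^2+d-4)/2$, whereas for $R$ with $d = n \geq 2$ one has $\dim_\CC \frakm^2/\frakm^3 = (n^2+n)/2$ since the generator $x_1^3$ lives in degree $3$; no further bookkeeping is needed. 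Your handling of $n \leq 2$ via Theorem~\ref{thm:main_toric_singularity} is correct, and your deduction of the Murphy's-law consequence from ruling out $R$ for every $n$ is also correct.
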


The proofs of the two theorems above use Altmann's results \cite{altmann_versal} on the versal deformation of isolated Gorenstein toric $3$-fold singularities (see \S\ref{sec:altmann}).
Altmann's results are partially extended to more general toric singularities in \cite{altmann_kastner, klaus_alex_matej_1, klaus_alex_matej_2, matej, filip_gerstenhaber}.
A conjectural description of the smoothing components of a (not necessarily isolated) Gorenstein toric $3$-fold singularity is given in \cite{cofipe}.

   \subsection{Deformations of K-polystable toric Fano varieties} \label{sec:intro_Fano}
   
   A \emph{Fano variety} is a normal projective variety $X$ over $\CC$ such that its anticanonical divisor $-K_X$ is $\QQ$-Cartier and ample.
   A \emph{del Pezzo surface} is a Fano variety of dimension $2$.
   All Fano varieties considered in this article have log terminal singularities.
   When we write \emph{Fano $n$-fold} we mean a Fano variety of dimension $n$; in particular, a Fano $n$-fold can be singular.
   If $X$ is a Fano $n$-fold, then its \emph{anticanonical volume} is the intersection number $(-K_X)^n$ and is a positive rational number.

   Recently there has been a lot of significant progress on constructing moduli spaces of Fano varieties using K-stability \cite{ABHLX, xu_minimizing, BLX, jiang_boundedness, blum_xu_uniqueness, xu_zhuang, properness_K_moduli, projectivity_K_moduli_final, lwx}: it is known that, for each integer $n \geq 1$ and for every rational number $v>0$, there exists an Artin stack $\cM^\mathrm{Kss}_{n,v} $ which is  of finite type over $\CC$ and which parametrises K-semistable Fano $n$-folds with anticanonical volume $v$; moreover this stack admits a good moduli space $M^\mathrm{Kps}_{n,v} $, which is a projective scheme over $\CC$. The closed points of $M^\mathrm{Kps}_{n,v} $ are in one-to-one correspondence with the K-polystable Fano $n$-folds with anticanonical volume $v$.
   We refer the reader to \cite{xu_survey} for a survey on these topics.

For every integer $n \geq 1$, we consider the following countable disjoint unions:
\begin{equation*}
	\stack{n} := \coprod_{v \in \QQ_{>0}} \cM^\mathrm{Kss}_{n,v} \qquad \text{and} \qquad 
	\modspace{n} := \coprod_{v \in \QQ_{>0}} M^\mathrm{Kps}_{n,v}.
\end{equation*}
In this way we get an Artin stack (resp.\ a scheme) which is locally of finite type over $\CC$ and which parametrises all K-semistable (resp.\ K-polystable) Fano $n$-folds, because the anticanonical volume is locally constant on a $\QQ$-Gorenstein family of Fano varieties. The stack $\stack{n}$ is called K-moduli stack and the scheme $\modspace{n}$ is called K-moduli space.
It is interesting to study the geometry (of the connected components) of $\stack{n}$ and of $\modspace{n}$; in this article we focus on some local properties.

If $n=2$, then the K-moduli stack $\stack{2}$, which parametrises K-semistable del Pezzo surfaces, is smooth because $\QQ$-Gorenstein deformations of del Pezzo surfaces are unobstructed \cite{hacking_prokhorov, procams}.  Thus the K-moduli space $\modspace{2}$ is a disjoint union of normal projective varieties with rational singularities (see \cite[Proposition~2.3]{ask_petracci}).

An analogous result holds when considering \emph{smooth} Fano varieties, because deformations of smooth Fano varieties are unobstructed.
Therefore $\stack{n}$ (resp.\ $\modspace{n}$) is smooth (resp.\ normal) in a neighbourhood of a smooth K-semistable (resp.\ K-polystable) Fano $n$-fold.
The same holds for terminal Fano $3$-folds, because deformations of terminal Fano $3$-folds are unobstructed by \cite{sano}.

Roughly speaking, we can say that deformations of Fano varieties which are mildly singular or have low dimension are unobstructed. Therefore in a neighbourhood of such Fano varieties the K-moduli stack (resp.\ K-moduli space) is smooth (resp.\ normal). Moreover, by \cite{reductive_quotients_klt} the K-moduli space is of klt type in a neighbourhood of every K-polystable Fano variety whose $\QQ$-Gorenstein deformations are unobstructed.

         In joint work with Kaloghiros \cite{ask_petracci} we exhibited the first examples of singular points on $\stack{n}$, for each $n \geq 3$.
  More precisely, we constructed K-polystable toric Fano varieties with obstructed $\QQ$-Gorenstein deformations. In this way we showed that, if $n \geq 3$, $\stack{n}$ and $\modspace{n}$ can be locally reducible or non-reduced (or both).
  
  Even if one considers only the irreducible components of $\modspace{n}$, for $n \geq 3$, which generically parametrises smooth K-polystable Fano $n$-folds, one can get non-normal singularities on the K-moduli space at the points which correspond to singular Fano varieties. Indeed, for each $n \geq 3$, the example in \cite[Theorem~1.1]{ask_petracci} is a singular K-polystable toric Fano $n$-fold with Gorenstein canonical singularities which lies at the intersection of  three distinct irreducible components of $\modspace{n}$, each of which generically parametrises smooth Fano $n$-folds.

      Here we prove a much more general statement concerning versal $\QQ$-Gorenstein deformations of K-polystable toric Fano varieties, and consequently concerning the singularities that appear on K-moduli of Fano varieties:
      
         \begin{maintheorem} \label{thm:main_hull_K-moduli_stack}
      	Let $R$ be a noetherian complete local $\CC$-algebra with residue field $\CC$.
      	Assume that $R$ is the hull of the deformation functor of an isolated Gorenstein toric $3$-fold singularity over $\CC$.
      	
      	Then, for every integer $n \geq 3$, there exists a K-polystable toric Fano $n$-fold $X$ over $\CC$ such that
      	the hull of the $\QQ$-Gorenstein deformation functor of $X$ is isomorphic to $(R \widehat{\otimes}_\CC R) \pow{t_1, \dots, t_m} / I$, for some non-negative integer $m$ and for some ideal $I$ which contains no non-zero constants.
      \end{maintheorem}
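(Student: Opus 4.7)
The plan is to construct, for each $n \geq 3$, a centrally symmetric reflexive polytope $\Delta_n \subset \RR^n$ such that the associated toric Fano variety $X_n := X_{\Delta_n}$ has exactly two ``obstructed'' torus-fixed points $p_+$ and $p_-$, swapped by the involution $-\id$, both with formal neighbourhood isomorphic to that of $U_\sigma \times \AA^{n-3}$ at the origin; here $U_\sigma$ is the given isolated Gorenstein toric $3$-fold singularity having hull $R$. All other torus-fixed points of $X_n$ should be isolated cyclic quotient singularities of dimension $\geq 3$, hence rigid by Schlessinger \cite{schlessinger}.

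The key step is the combinatorial construction of $\Delta_n$. Writing $\sigma = \cone{(q,1) : q \in \mathrm{vert}(P)} \subset \RR^3$ for a lattice polygon $P \subset \RR^2$, I would take $F_+$ to be the $(n-1)$-dimensional lattice polytope in the affine hyperplane $\{x_3 + \dots + x_n = 1\} \subset \RR^n$ with vertex set $\{(q, 1, 0, \ldots, 0) : q \in \mathrm{vert}(P)\} \cup \{e_4, \ldots, e_n\}$, so that the cone over $F_+$ with apex $0$ is $\sigma \times \RR^{n-3}_{\geq 0}$ and the toric germ at the corresponding fixed point is that of $U_\sigma \times \AA^{n-3}$. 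I then define $\Delta_n$ as a centrally symmetric reflexive lattice polytope having $F_\pm := \pm F_+$ as opposite facets and all other facets being lattice simplices, via a bipyramidal construction in the spirit of \cite{ask_petracci}.

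Given such $\Delta_n$, the K-polystability of $X_n$ follows automatically from its central symmetry by the barycenter criterion for toric Fano varieties. The local hull of $\DefqG{X_n, p_\pm}$ equals $R$, because $\QQ$-Gorenstein deformations are insensitive to multiplication by the smooth factor $\AA^{n-3}$, while all other torus-fixed points contribute trivially. Via the local-to-global exact sequence for $\QQ$-Gorenstein deformations, the two local contributions from $p_\pm$ jointly yield a surjection from the hull of $\DefqG{X_n}$ onto $R \widehat{\otimes}_\CC R$; classes in $H^1(X_n, \cTqG{0}{X_n})$ contribute $m$ smooth formal variables $t_1, \ldots, t_m$ in the kernel, and obstructions in $H^2(X_n, \cTqG{0}{X_n})$ produce the ideal $I \subseteq \frakm$ of relations, yielding the required presentation $(R \widehat{\otimes}_\CC R) \pow{t_1, \ldots, t_m} / I$.

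The main obstacle I foresee is the combinatorial step: simultaneously arranging the reflexivity of $\Delta_n$, the prescribed toric germs at $p_\pm$, and the simpliciality of the remaining facets requires a delicate bipyramidal construction. Once that is carried out, the K-polystability verification and the deformation-theoretic assembly are essentially formal consequences of the local theory of $\QQ$-Gorenstein toric deformations.
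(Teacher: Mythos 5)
Your overall skeleton — a centrally symmetric polytope with two opposite facets whose cones reproduce the given toric singularity, K-polystability from central symmetry, and a surjection-with-section from the global $\QQ$-Gorenstein deformation functor onto the product of the two local ones — is exactly the strategy of the paper in dimension $3$. However, your proposal contains two genuine problems, and one place where you are working substantially harder than needed.

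First, the combinatorial step you flag as the ``main obstacle'' is indeed unresolved, but it is also unnecessary. You aim to make \emph{all other facets} of $\Delta_n$ lattice simplices so that the remaining torus-fixed points are rigid cyclic quotient singularities. The paper does not impose any such condition: the restriction result (Proposition~\ref{prop:section_deformation_qG}) only requires the chosen points $x_1,\dots,x_t$ to be isolated in the non-smooth locus, while $X\setminus\{x_1,\dots,x_t\}$ is allowed to have arbitrary (even non-isolated) singularities, and the only global input is $H^1(X,\cT^0_X)=H^2(X,\cT^0_X)=0$, which holds for \emph{every} toric Fano variety by Totaro. So in dimension $3$ the paper simply takes $P_F=\conv{(F\times\{1\})\cup((-F)\times\{-1\})}$ and uses the two facets $F\times\{1\}$ and $(-F)\times\{-1\}$, with no control whatsoever over the side facets.

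Second, and more seriously, your direct $n$-dimensional construction would not let you run the restriction argument for $n>3$. If the cone over $F_+$ is $\sigma\times\RR_{\geq0}^{\,n-3}$, then the local chart is $U_\sigma\times\AA^{n-3}$, whose non-smooth locus is $\{\text{pt}\}\times\AA^{n-3}$, a positive-dimensional set. Hence the torus-fixed points $p_\pm$ are not isolated points of the non-smooth locus for $n>3$, and Proposition~\ref{prop:section_deformation_qG} (or any local-to-global statement of that shape with an ``isolated'' hypothesis) does not apply at $p_\pm$. The paper avoids this entirely: it proves the theorem for $n=3$ with the $3$-dimensional polytope $P_F$, where the two singular points are genuinely isolated, and then promotes to $n>3$ by taking the product $X\times\PP^{n-3}$ and invoking \cite[Proposition~4.1]{ask_petracci}, which compares $\QQ$-Gorenstein deformations of $X$ with those of $X\times\PP^{n-3}$ directly. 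You would need either to reproduce that comparison lemma or to reformulate the restriction result so that it applies to higher-dimensional loci fibered over a smooth factor; as written, your assembly step has a hole for $n>3$.

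So: correct skeleton for $n=3$ (modulo the unnecessary simpliciality constraint), but an unrecognized obstruction for $n>3$, in addition to the combinatorial step you yourself identified as incomplete.
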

  
  	Some remarks are in order.
  	\begin{itemize}
  		\item Let $R$ be a noetherian complete local $\CC$-algebra with residue field $\CC$ and with maximal ideal $\frakm$.
  		The symbol $R \widehat{\otimes}_\CC R$ used in Theorem~\ref{thm:main_hull_K-moduli_stack} denotes the completed tensor product of $R$ with itself; this is the inverse limit of $R/\frakm^{n+1} \otimes_\CC R/\frakm^{n+1}$ as $n\geq 0$, where $\otimes_\CC$ denotes the ordinary tensor product.
  		In this way, if $R$ is a power series ring over $\CC$ in $r$ variables, then $R \widehat{\otimes}_\CC R$ is a power series ring over $\CC$ in $2r$ variables.

  		\item Let $R$ and $R'$ be two noetherian complete local $\CC$-algebras with residue field $\CC$. Then $R'$ is isomorphic to $(R \widehat{\otimes} R) \pow{t_1, \dots, t_m} / I$, for some non-negative integer $m$ and for some ideal $I$ with $I \cap (R \widehat{\otimes} R) =  0 $, if and only if there exist two local $\CC$-algebra homomorphisms $f \colon R \widehat{\otimes}_\CC R \to R'$
  		and $g \colon R' \to R \widehat{\otimes}_\CC R$ such that $g \circ f = \id_{R \widehat{\otimes}_\CC R}$, i.e.\ if there exists a surjection
  		\[
  		\Spf R' \longrightarrow \Spf R \widehat{\otimes}_\CC R = \Spf R \times_{\Spec \CC} \Spf R
  		\]
  		admitting a section.
  		If this is the case, then $R \widehat{\otimes}_\CC R$ is a subring of $R'$.
  		
  		\item Thanks to what we have just seen, Theorem~\ref{thm:main_hull_K-moduli_stack} can be rephrased as follows.       If a complex analytic germ $(0 \in S)$ is the base of the miniversal deformation of an isolated Gorenstein toric $3$-fold singularity, then for each $n \geq 3$ it is possible to find a K-polystable toric Fano $n$-fold $X$ such that its miniversal $\QQ$-Gorenstein deformation space $\DefqG{X}$ satisfies the following: there exists a surjection of analytic germs
  		\[
  		\DefqG{X} \longrightarrow S \times S
  		\]
  		which admits a section. (A surjection of germs which admits a section is called \emph{retraction} by Jelisiejew~\cite{joachim}.)
  		
  		\item Since the germ $\DefqG{X}$ is the local structure of the stack $\stack{n}$ near the closed point to $[X]$, Theorem~\ref{thm:main_hull_K-moduli_stack} implies that the germ $S \times S$ appears, up to retraction, as a singularity on $\stack{n}$, for every $n \geq 3$.
  		This is what we meant in the abstract when we wrote that deformation spaces of isolated Gorenstein toric $3$-fold singularities appear \emph{in a weak sense} as singularities of the K-moduli stack of K-semistable Fano varieties of dimension $\geq 3$.
  		
  		\item The proof of Theorem~\ref{thm:main_hull_K-moduli_stack} will show that the ring $(R \widehat{\otimes}_\CC R) \pow{t_1, \dots, t_m} / I$ can be chosen independently from the dimension $n$.
  	\end{itemize}


  Examples of hulls of isolated Gorenstein toric $3$-fold singularities are given in \S\ref{sec:toric_singularities}.
  Since such hulls are quite restricted, we might expect that toric geometry does not help to prove that K-moduli of Fano varieties satisfy Murphy's law in the sense of Vakil~\cite{vakil} or Murphy's law up to retraction in the sense of Jelisiejew~\cite{joachim}.
Actually, it is not clear to us whether Murphy's law should hold for K-moduli of Fano varieties at all.

Nonetheless, hulls of toric singularities are enough to show that K-moduli of Fano varieties can acquire an arbitrarily large number of local branches:

\begin{maintheorem} \label{thm:main_many_branches}
For every integer $n \geq 3$ and for every integer $m \geq 1$, there exists a K-polystable toric Fano $n$-fold $X$ over $\CC$ such that $\stack{n}$ and $\modspace{n}$ have at least $m$ local branches at the point $[X]$.
\end{maintheorem}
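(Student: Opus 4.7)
The plan is to derive this theorem from Theorem~\ref{thm:main_hull_K-moduli_stack}, using that isolated Gorenstein toric $3$-fold singularities can have versal deformation spaces with arbitrarily many irreducible components.

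By Altmann's combinatorial description \cite{altmann_versal} of such versal deformations via maximal Minkowski decompositions of the defining lattice polygon, for every $k \geq 1$ there exists an isolated Gorenstein toric $3$-fold singularity whose hull $R_k$ has at least $k$ minimal primes. Concretely, cones over cyclic polygons produce a Catalan number of components, which is unbounded as the number of vertices grows.

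Given $m$ and $n$ as in the statement, pick $k$ sufficiently large (to be quantified below) and set $R = R_k$. Applying Theorem~\ref{thm:main_hull_K-moduli_stack} yields a K-polystable toric Fano $n$-fold $X$ whose $\QQ$-Gorenstein deformation hull $R'$ retracts onto $R \widehat{\otimes}_\CC R$. Since $R$ has at least $k$ minimal primes, $\Spf(R \widehat{\otimes}_\CC R) = \Spf R \times_{\Spec \CC} \Spf R$ has at least $k^2$ irreducible components, which the section of the retraction pulls back to $\geq k^2$ distinct irreducible components of $\Spf R'$.

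Finally, étale-locally at $[X]$ the stack $\stack{n}$ is $[\Spf R' / \Aut(X)]$ and the space $\modspace{n}$ is $\Spf R' /\!/ \Aut(X)$; in either case the number of local branches at $[X]$ equals the number of $\pi_0(\Aut(X))$-orbits on the set of branches of $\Spf R'$, because the identity component $\Aut(X)^0$, which contains the dense torus of $X$, acts trivially on this finite discrete set. The main obstacle is therefore to bound $|\pi_0(\Aut(X))|$, which depends on $X$ and hence a priori on the choice of $R$; this obstacle is handled by observing that $|\pi_0(\Aut(X))|$ is bounded by the combinatorial symmetry group of the defining polytope of $X$, so by choosing $k$ large enough that $k^2 \geq m \cdot |\pi_0(\Aut(X))|$ we obtain at least $m$ orbits, concluding the proof.
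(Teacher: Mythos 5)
Your outline follows the same overall strategy as the paper (Altmann singularities with many smoothing components, Construction~\ref{constr:P_F}, the retraction from Theorem~\ref{thm:main_hull_K-moduli_stack}, then descend to $\modspace{n}$ via the Luna slice). However, there is a genuine gap in the final step for the K-moduli \emph{space}.

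You claim that the number of local branches of $\modspace{n}$ at $[X]$ equals the number of $\pi_0(\Aut(X))$-orbits on the branches of $\Spf R'$, ``because the identity component $\Aut(X)^0$ acts trivially on this finite discrete set.'' But passing from $R'$ to the invariant ring $(R')^{\Aut(X)^0}$ is a GIT quotient of the whole formal scheme, not a quotient of the finite set of components, and it can drop the number of minimal primes even when the group acts trivially on that set. A concrete example: let $R' = \CC[x,y]/(xy)$ with $T=\Gm$ acting with weight $+1$ on $x$ and $-1$ on $y$; then $\Spf R'$ has two branches, both $T$-invariant, yet $(R')^T = \CC$ has a single minimal prime. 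This is exactly the situation at hand, since the coordinates on $\TT^1_{Y_+}$ and on $\TT^1_{Y_-}$ carry opposite torus weights, so the merging phenomenon is a real threat rather than a hypothetical one. Your argument therefore proves nothing about $\modspace{n}$.

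The paper handles this by a different mechanism. Instead of counting $\pi_0(G)$-orbits, it identifies $(R \widehat{\otimes}_\CC R)^T$ with the completion of the Segre product $A_F \# A_F$ (Remark~\ref{rmk:segre_product_and_torus_actions}), and then invokes Lemma~\ref{lem:irr_comp_segre_product} to see that $A_F \# A_F$ has the square of the number of minimal primes of $A_F$. The $T$-equivariance of the retraction \eqref{eq:restriction_map_toric} then gives that $(R')^T$ has at least that many minimal primes, and only after that does it divide by the \emph{finite} index $[G:T]$ (computed explicitly as $4$ using that $F$ is centrally symmetric so $P_F$ is a prism and $\Aut(P_F)\simeq C_2\times C_2$). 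Your argument has no analogue of the Segre-product step, and without it the lower bound on branches of $(R')^G$ does not follow. Note also that your bound on $|\pi_0(\Aut(X))|$ by the symmetry group of the polytope implicitly uses that $\Aut(X)^0 = T$, which requires verifying the hypothesis of \cite[Proposition~2.8]{ask_petracci} (every facet of the polar polytope has no interior lattice point); the paper's explicit construction guarantees this, whereas an arbitrary ``cyclic polygon'' would need checking.
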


	In this paper we do not consider deformations of toric varieties which are non-affine and non-Fano.
	Concerning deformations of smooth projective toric varieties, the following results are known:
	smooth projective toric surfaces are unobstructed \cite{ilten_surfaces},
	there exist smooth projective toric $3$-folds which are obstructed \cite{ilten_turo} (see also \cite{petracci_felten_robins}),
	the deformation space of a smooth projective toric variety cannot be a non-reduced point \cite{ilten_vollmert}.

\subsection*{Outline}
The setting of isolated $\QQ$-Gorenstein toric singularities (as in \S\ref{sec:intro_toric_singularities}) is studied in \S\ref{sec:toric_singularities}.
The proof of Theorem~\ref{thm:main_toric_singularity} and
the proof of Theorem~\ref{thm:main_no_murphys_law_for_toric_singularities} is given in \S\ref{sec:subsection_toric_singularities_final}.

The setting of K-polystable toric Fano varieties (as in \S\ref{sec:intro_Fano}) is studied in \S\ref{sec:toric_Fano}.
The proof of Theorem~\ref{thm:main_hull_K-moduli_stack} is given in \S\ref{sec:subsection_Fano}.
The proof of Theorem~\ref{thm:main_many_branches} is given in \S\ref{sec:arbitrarily_many_branches}.

In \S\ref{sec:deformations} we give some preliminaries on deformation theory.
In \S\ref{sec:classical_deformation} we present some basics on deformation theory and we fix the terminology.
In \S\ref{sec:restriction_maps} and \S\ref{sec:qG} we study restriction maps between deformation functors and we consider $\QQ$-Gorenstein deformations, respectively -- both sections will be used only with Fano varieties, i.e.\ in \S\ref{sec:toric_Fano} and not in the proof of Theorems~\ref{thm:main_toric_singularity} and \ref{thm:main_no_murphys_law_for_toric_singularities}.

In \S\ref{sec:algebra} we give some tools in commutative algebra.
In \S\ref{sec:subsection_standard_graded} we recall what a standard graded algebra is and we state same properties relating a standard graded algebra to its completion at its homogeneous maximal ideal.
In \S\ref{sec:segre_products} we consider Segre products of standard graded algebras: this notion and its properties will be used only in the proof of Theorem~\ref{thm:main_many_branches} in \S\ref{sec:arbitrarily_many_branches}.
In \S\ref{sec:newton} we study generalised Newton identities which are crucial in order to study the standard graded algebras associated to polygons \`a la Altmann in \S\ref{sec:toric_singularities}.

\subsection*{Notation and conventions}
The set of non-negative integers is denoted by $\NN$.
The symbol $\CC$ stands for an algebraically closed field of characteristic $0$, whereas $\Bbbk$ denotes an arbitrary field.
Every ring is commutative with identity.
Every toric variety or toric singularity is assumed to be normal.
A polygon is by definition a polytope of dimension $2$.

\subsection*{Acknowledgements}
I wish to thank Klaus Altmann, Alexandru Constantinescu, Matej Filip, Anne-Sophie Kaloghiros, and Alessandro Oneto for fruitful conversations.
I am very grateful to Jan Christophersen, Marco Manetti, Jan Stevens, Duco van Straten, Nikolaos Tziolas for useful comments and observations about deformation theory.
Finally I thank Matteo Tanzi for help with the proof of Lemma~\ref{lem:infinite_iterates}.


\section{Deformations}
\label{sec:deformations}
\subsection{Quick recap of deformation theory}
\label{sec:classical_deformation}

Here we briefly recall some rudiments on deformation theory, just to fix the terminology.
Details and proofs can be found in any reference about deformation theory, e.g.\ \cite{schlessinger_artin_rings, sernesi, manetti_seattle, talpo_vistoli}.

Let $\set$ be the category of sets.
Let $\KK$ be a field and let $\art$ be the category of artinian local $\KK$-algebras with residue field $\KK$.
A \emph{deformation functor} is a covariant functor $F \colon \art \to \set$ such that $F(\KK)$ is a singleton  and $F$ satisfies Schlessinger's axioms (H1) and (H2).
One can define the notions of tangent space and of obstruction space of a deformation functor. 
One can define the notion of a map between deformation functors  and define when such a map is smooth.

Let $\comp$ denote the category of noetherian complete local $\KK$-algebras with residue field $\KK$.
Every ring $R$ in $\comp$ induces a deformation functor $h_R = \Hom_{\comp}(\cdot, R)$, which is called the functor prorepresented by $R$.
A ring $R$ in $\comp$ is called a \emph{hull} of a deformation functor $F$ if there exists a smooth map $h_R \to F$ which induces a bijection on the tangent spaces. A hull is unique, if it exists.

\medskip

For a scheme $X$ of finite type over $\KK$, let $\Def{X} \colon \art \to \set$ be the functor which maps each $A \in \art$ to the set of isomorphism classes of flat deformations of $X$ over $\Spec A$; it is a deformation functor.

If $R$ is a hull of $\Def{X}$, we say that the formal spectrum of $R$, denoted by $\Spf R$, is the \emph{base of the miniversal deformation} of $X$. If $\KK = \CC$ one could work in the analytic category and $R$ would be the completion of an analytic germ, which is called the base of the Kuranishi--Grauert deformation (or simply the Kuranishi space) of the analytification of $X$.
Both in the algebraic/formal category and in the analytic category we  say that $\Spf R$ or the corresponding analytic germ is the  \emph{(versal) deformation space} of $X$.
We say that $X$ is \emph{rigid} if its deformation space is a point, i.e.\ if the hull of its deformation functor is $\KK$.

If $\LL_X$ denotes the cotangent complex of $X$ over $\KK$, then for every $i \geq 0$ we consider the $\KK$-vector space $\TT^i_X = \Ext^i(\LL_X, \cO_X)$ and the coherent $\cO_X$-module $\cT^i_X = \cExt^i(\LL_X, \cO_X)$. The tangent space of $\Def{X}$ is $\Ext^1_X(\LL_X, \cO_X)$ and $\Ext^2_X(\LL_X, \cO_X)$ is an obstruction space of $\Def{X}$. Under certain hypotheses one can replace $\LL_X$ with $\tau_{\geq 0} \LL_X = \Omega_X$, which is the $\cO_X$-module of K\"ahler differentials of $X$ over $\KK$; for example:
\begin{itemize}
\item $\TT^0_X = \Hom(\Omega_X, \cO_X)$ and $\cT^0_X = \cHom (\Omega_X, \cO_X)$;
\item if $X$ is reduced and generically smooth over $\KK$ (e.g.\ if $\KK$ is perfect and $X$ is reduced), then $\TT^1_X = \Ext^1(\Omega_X, \cO_X)$ and $\cT^1_X = \cExt^1(\Omega_X, \cO_X)$;
\item if $\KK$ is perfect and ($X$ is normal or ($X$ is reduced and lci over $\KK$)), then $\TT^2_X = \Ext^2(\Omega_X, \cO_X)$ and $\cT^2_X = \cExt^2(\Omega_X, \cO_X)$.
\end{itemize}

\subsection{Restriction maps}
\label{sec:restriction_maps}
This section can be omitted if one is interested in the proof of Theorems~\ref{thm:main_toric_singularity} and \ref{thm:main_no_murphys_law_for_toric_singularities} only; it will be applied only in \S\ref{sec:qG}.


When $U$ is an open subscheme of a scheme $X$, then one can restrict a deformation of $X$ to $U$ and get a deformation of $U$. This produces a natural transformation of functors $\Def{X} \to \Def{U}$.

\begin{lemma} \label{lem:injective_product_restrictions}
Let $X$ be a separated scheme of finite type over $\KK$ and let $\cU = \{ U_i \}_{i \in I}$ be a finite affine open cover of $X$.
If $\rH^1(X, \cT^0_X) = 0$, then the product of restriction maps
\[
\Def{X} \longrightarrow \prod_{i \in I} \Def{U_i}
\]
is injective.
\end{lemma}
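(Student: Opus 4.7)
The plan is to show, by induction on the length of $A \in \art$, that two flat deformations $\cX, \cX'$ of $X$ over $\Spec A$ whose restrictions to each $U_i$ are isomorphic (as deformations of $U_i$) are in fact isomorphic as deformations of $X$. The base case $A = \KK$ is trivial since both deformations are $X$ itself.

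For the inductive step, choose a small extension $0 \to (t) \to A \to A_0 \to 0$ with $t \cdot \frakm_A = 0$. Let $\phi_i \colon \cX|_{U_i} \xrightarrow{\sim} \cX'|_{U_i}$ be isomorphisms of deformations of $U_i$ over $A$. By the inductive hypothesis applied to $\cX \otimes_A A_0$ and $\cX' \otimes_A A_0$ (together with the tangent lifting of $\bar\phi_i = \phi_i \otimes_A A_0$), we may arrange that there is a global isomorphism $\bar\phi \colon \cX \otimes_A A_0 \xrightarrow{\sim} \cX' \otimes_A A_0$ and, after composing each $\phi_i$ with an automorphism of $\cX'|_{U_i}$ lifting the identity, that $\phi_i \otimes_A A_0 = \bar\phi|_{U_i}$ for every $i$.

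With this normalisation, on each double intersection $U_{ij} := U_i \cap U_j$ the composition $\psi_{ij} := \phi_i \circ \phi_j^{-1}$ is an automorphism of $\cX'|_{U_{ij}}$ over $A$ restricting to the identity modulo $t$. Standard deformation theory identifies the group of such automorphisms with $\rH^0(U_{ij}, \cT^0_X) \otimes_\KK (t) = \rH^0(U_{ij}, \cT^0_X)$: an automorphism fixing the reduction $U_{ij}$ and trivial modulo $t$ is of the form $\id + t\, D$ for a unique $\KK$-derivation $D \in \cT^0_X(U_{ij})$. The collection $\{\psi_{ij}\}$ is then a \v{C}ech $1$-cocycle in $Z^1(\cU, \cT^0_X)$ for the affine cover $\cU$ (the cocycle relation $\psi_{ij}\psi_{jk} = \psi_{ik}$ translates to additivity of the corresponding derivations because $t \cdot \frakm_A = 0$).

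Since the $U_i$ are affine and $\cT^0_X$ is coherent, $\rH^1(\cU, \cT^0_X) = \rH^1(X, \cT^0_X) = 0$ by the Leray comparison theorem and the vanishing hypothesis. Hence $\{\psi_{ij}\}$ is a coboundary: there exist $D_i \in \cT^0_X(U_i)$ with $\psi_{ij} = (\id + t D_i)(\id + t D_j)^{-1}$ on $U_{ij}$. Replacing $\phi_i$ by $(\id + t D_i)^{-1} \circ \phi_i$, the new local isomorphisms agree on all overlaps and, since $X$ is separated, glue to a global isomorphism $\cX \xrightarrow{\sim} \cX'$ of deformations. The main subtlety is the bookkeeping in the inductive step — in particular, the fact that infinitesimal automorphisms fixing the central fibre and trivial modulo $t$ are naturally sections of $\cT^0_X$, which is precisely why the vanishing of $\rH^1(X, \cT^0_X)$ is exactly the right hypothesis to kill the obstruction to globalising the local isomorphisms.
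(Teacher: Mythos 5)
Your overall strategy---induction along a small extension $0 \to (t) \to A \to A_0 \to 0$ using the hypothesis $\rH^1(X, \cT^0_X) = 0$---matches the paper's, but the inductive normalization step contains a genuine gap. You propose to compose each $\phi_i$ with an automorphism of $\cX'|_{U_i}$ over $A$ lifting the identity so as to arrange $\phi_i \otimes_A A_0 = \bar\phi|_{U_i}$; this amounts to lifting the automorphism $\bar\phi|_{U_i} \circ (\phi_i \otimes_A A_0)^{-1}$ of $\cX'|_{U_i} \otimes_A A_0$ across the small extension $A \to A_0$. Such liftings are obstructed even over affine opens. For example, take $U = \Spec \KK[x]/(x^2)$, $A = \KK[t]/(t^3)$, $A_0 = \KK[t]/(t^2)$, and the deformation $\cY = \Spec A[x]/(x^2-t)$. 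The assignment $x \mapsto x(1+t)$ is an $A_0$-algebra automorphism of $\cY \otimes_A A_0$ reducing to $\id_U$; a putative lift must send $x$ to $t^2 a' + (1 + t + t^2 b')x$ for some $a', b' \in \KK$, and one computes directly that the image of $x^2 - t$ is $2t^2(1 + a'x) \neq 0$ in $A[x]/(x^2-t)$ for all $a', b'$, so no lift exists. Without the normalization, the maps $\psi_{ij} = \phi_i\circ\phi_j^{-1}$ need not reduce to the identity modulo $t$, so they do not lie in $\cT^0_X(U_{ij})$ and the \v{C}ech-vanishing step does not apply.

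The paper avoids ever choosing cocycle representatives of automorphisms: it compares the two liftings $\xi'$ and $\eta'$ of their common pushforward $\xi$ via the action of $J \otimes_\KK \TT^1_X$ on liftings, obtaining a class $g \in J \otimes_\KK \TT^1_X$; the local agreement shows that $g$ dies in each $J \otimes_\KK \TT^1_{U_i}$, hence in $J \otimes_\KK \rH^0(X, \cT^1_X)$, and the local-to-global exact sequence $\rH^1(X, \cT^0_X) \to \TT^1_X \to \rH^0(X, \cT^1_X)$ together with $\rH^1(X, \cT^0_X) = 0$ forces $g = 0$. Repairing your argument in effect amounts to replacing the cocycle bookkeeping by this class-level comparison.
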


\begin{proof}
We need to prove that, for every $A \in \art$, if  $\xi$ and $\eta$ are two deformations of $X$ over $A$ such that $\xi \vert_{U_i}$ is isomorphic to $\eta \vert_{U_i}$ for every $i \in I$, then
$\xi$ and $\eta$ are isomorphic.
We prove this by induction on $\dim_\KK \frakm_A$, where $\frakm_A$ is the maximal ideal of $A$.
The case $\dim_\KK \frakm_A = 0$ (i.e.\ $A = \KK$) is obvious.

For the inductive step we consider a small extension $f \colon A' \to A$ in $\art$, with kernel $J$, and we assume that the thesis holds for $A$.
Let $\xi'$ and $\eta'$ be two deformations of $X$ over $A'$ which are locally isomorphic with respect to $\cU$, i.e.\ $\xi' \vert_{U_i} \simeq \eta' \vert_{U_i}$ for every $i \in I$.
We want to show that $\xi'$ and $\eta'$ are isomorphic.
We consider the pushforward $\xi = f_* (\xi')$ and $\eta = f_* (\eta')$ in $\Def{X}(A)$.
It is clear that $\xi$ and $\eta$ are locally isomorphic with respect to $\cU$.
Therefore, by inductive hypothesis, we have that $\xi$ and $\eta$ are isomorphic. From now on we identify $\eta$ with $\xi$.

Since the set of liftings of $\xi$ to $A'$ is a pseudo-torsor under the action of the abelian group $J \otimes_\KK \TT^1_X$, the two deformations $\xi'$ and $\eta'$  differ by the action of a unique element of $J \otimes_\KK \TT^1_X$, i.e.\ there exists a unique element $g \in J \otimes_\KK \TT^1_X$ such that $g + \xi' = \eta'$.
Since the set of liftings of $\xi \vert_{U_i}$ to $A'$ is a pseudo-torsor under $J \otimes_\KK \TT^1_{U_i}$ and $\xi' \vert_{U_i}$ and $\eta' \vert_{U_i}$ are isomorphic, we have $g \vert_{U_i} = 0$ in $J \otimes_\KK \TT^1_{U_i}$, for every $i \in I$.
This implies that $g$ lies in the kernel of the natural homomorphism $J \otimes_\KK \TT^1_X \to J \otimes_\KK \rH^0(X, \cT^1_X)$.
Now consider the short exact sequence
\[
0 \to \rH^1(X, \cT^0_X) \to \TT^1_X \to \rH^0(X, \cT^1_X) \to \rH^2(X, \cT^0_X)
\]
which comes from the local-to-global spectral sequence whose second page is $E_2^{p,q} = \rH^p(X, \cT^q_X)$ and which converges to $\TT^{p+q}_X$.
By tensoring the short exact sequence above with $J$ and by using the hypothesis that
 $\rH^1(X, \cT^0_X)$ vanishes, we deduce that $g = 0$ in $J \otimes_\KK \TT^1_X$.
Hence $\xi'$ and $\eta'$ are isomorphic.
This concludes the proof of the inductive step.
\end{proof}

\begin{proposition} \label{prop:section_deformation}
	Let $X$ be a separated scheme of finite type over $\KK$, let $x$ be an isolated point of the non-smooth locus of $X$ and let $Y$ be an affine open neighbourhood of $x$ in $X$ such that $Y \setminus \{ x \}$ is smooth.
	If $\rH^1(X, \cT^0_X) = 0$ and  $\rH^2(X, \cT^0_X) = 0$, then the restriction map
	\begin{equation} \label{eq:restriction_map}
	\Def{X} \longrightarrow \Def{Y}
	\end{equation}
	is surjective and admits a section.
\end{proposition}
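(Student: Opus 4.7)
The plan is to construct a natural transformation $s \colon \Def{Y} \to \Def{X}$ satisfying $(\text{restriction}) \circ s = \id_{\Def{Y}}$; this simultaneously yields the section and shows that the restriction map is surjective on each $A \in \art$. Pick a finite affine open cover $\cU = \{Y, V_1, \dots, V_r\}$ of $X$ with each $V_i \subseteq X \setminus \{x\}$. By separatedness of $X$, every intersection $Y \cap V_i$ is affine, and because it is contained in $Y \setminus \{x\}$ it is smooth, hence $\TT^1_{Y \cap V_i} = \rH^1(Y \cap V_i, \cT^0_{Y \cap V_i}) = 0$. Consequently $\Def{Y \cap V_i}$ is the trivial functor: every deformation of $Y \cap V_i$ is isomorphic to the trivial one.

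Given $\eta \in \Def{Y}(A)$, I define $s(\eta) \in \Def{X}(A)$ by gluing $\eta$ on $Y$ to the trivial deformation $V_i \times \Spec A$ on each $V_i$: on each overlap $Y \cap V_i$ pick an isomorphism $\phi_{0i}$ from $\eta|_{Y \cap V_i}$ to the trivial deformation (available by the rigidity just noted), and on $V_i \cap V_j$ take the identity. I proceed inductively along each small extension $A' \twoheadrightarrow A$ in $\art$ with kernel $J$: the obstruction to choosing the $\phi_{0i}$'s (lifted from stage $A$) so that the cocycle condition on triple overlaps of $\cU$ is met is a class in $\check{\rH}^2(\cU, \cT^0_X \otimes_\CC J)$, since the infinitesimal automorphism sheaf of the trivial deformation over $A'$ that reduces to the identity over $A$ is $\cT^0_X \otimes_\CC J$. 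Because $\cU$ is an affine cover and $\cT^0_X \otimes_\CC J$ is quasi-coherent, this \v{C}ech cohomology coincides with $\rH^2(X, \cT^0_X) \otimes_\CC J$, which vanishes by hypothesis; hence the cocycle condition can be arranged and the local data glue to $s(\eta) \in \Def{X}(A)$ with $s(\eta)|_Y = \eta$.

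Lemma~\ref{lem:injective_product_restrictions} (which uses $\rH^1(X, \cT^0_X) = 0$) says that an element of $\Def{X}(A)$ is determined by its restrictions to the members of $\cU$, so $s(\eta)$ is well defined as an isomorphism class independently of the gluing choices. Naturality of $s$ in $A$ is clear from base-change compatibility of the construction. The principal technical obstacle is the cocycle-vanishing step: one must view the gluing obstruction as a \v{C}ech cohomology class valued in $\cT^0_X \otimes_\CC J$, use affineness of $\cU$ and coherence of $\cT^0_X$ to identify \v{C}ech with derived cohomology, and apply $\rH^2(X, \cT^0_X) = 0$; the analogous \v{C}ech $\rH^1$-vanishing, which handles the indeterminacy in the $\phi_{0i}$, is precisely Lemma~\ref{lem:injective_product_restrictions}.
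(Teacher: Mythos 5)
Your proof is correct and takes essentially the same route as the paper's: cover $X$ by $Y$ and affines away from $x$, glue $\eta$ on $Y$ to trivial deformations elsewhere, identify the gluing obstruction as a class in $\rH^2(X,\cT^0_X)\otimes J$ which vanishes by hypothesis, and invoke Lemma~\ref{lem:injective_product_restrictions} for well-definedness and naturality. The only cosmetic difference is that the paper packages the obstruction step via a citation (Osserman, Prop.~3.12) whereas you unwind the \v{C}ech argument directly.
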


\begin{remark} \label{rmk:restriction_map_smooth?}
	We do not claim that the restriction map~\eqref{eq:restriction_map} is smooth.
	Using the local-to-global spectral sequence of Ext, it is easy to show that, under the hypotheses of Proposition~\ref{prop:section_deformation}, if $X \setminus \{x\}$ is lci and $\rH^1(X, \cT^1_X) = 0$, then the restriction map~\eqref{eq:restriction_map} is smooth.
\end{remark}

\begin{proof}[Proof of Proposition~\ref{prop:section_deformation}]
	Let $\cU = \{ U_i \}_{i \in I}$ be an affine open cover of $X \setminus \{ x \}$,
	where the index set $I$ is finite and equipped with a total order.
	Hence $\{ Y \} \cup \cU$ is a finite affine open cover of $X$.
	We consider the product of the restriction maps
	\begin{equation} \label{eq:product_restriction_map}
	\Def{X} \longrightarrow \Def{Y} \times \prod_{i \in I} \Def{U_i}
	\end{equation}
which is injective by Lemma~\ref{lem:injective_product_restrictions}.

\begin{claim} \label{claim:image}
Let $A \in \art$, let $\xi \in \Def{Y}(A)$ be an arbitrary deformation of $Y$ over $A$ and, for each $i \in I$, let $\xi_i \in \Def{U_i}(A)$ denote the trivial deformation of $U_i$ over $A$.
Then the tuple $(\xi, (\xi_i)_{i \in I}   )$ lies in the image of the map~\eqref{eq:product_restriction_map}.
\end{claim}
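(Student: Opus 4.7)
The plan is to induct on $\dim_\KK \frakm_A$. The base case $\dim_\KK \frakm_A = 0$ (i.e., $A = \KK$) is handled by taking $\zeta = X$, the trivial deformation. For the inductive step, I would fix a small extension $\pi \colon A \twoheadrightarrow \bar A$ in $\art$ with kernel $J \cong \KK$, set $\bar\xi := \pi_* \xi$ and $\bar\xi_i := \pi_* \xi_i$ (still the trivial deformation), and apply the inductive hypothesis to obtain $\bar\zeta \in \Def{X}(\bar A)$ with $\bar\zeta|_Y \cong \bar\xi$ and $\bar\zeta|_{U_i} \cong \bar\xi_i$ for each $i \in I$. The remaining task is then to glue $\xi$ on $Y$ with the trivial deformations $\xi_i$ on the $U_i$'s into a global $\zeta \in \Def{X}(A)$ lifting $\bar\zeta$, using transition isomorphisms over the finite affine open cover $\mathcal{W} := \{Y\} \cup \{U_i\}_{i \in I}$ of $X$ that reduce to the transition data of $\bar\zeta$.

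Transition isomorphisms on the double overlaps exist: on $Y \cap U_i$, which is smooth (because $Y \cap U_i \subseteq Y \setminus \{x\}$) and affine (since $X$ is separated), the deformation functor $\Def{Y \cap U_i}$ is trivial since both $\TT^1$ and $\TT^2$ vanish for a smooth affine scheme. Hence $\xi|_{Y \cap U_i}$ and $\xi_i|_{Y \cap U_i}$ are both trivial and admit an isomorphism $\phi_i$ between them lifting the corresponding transition in $\bar\zeta$; on $U_i \cap U_j$ both restrictions coincide with the trivial deformation of $U_i \cap U_j$ over $A$, and we take the identity transition.

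The main obstacle is to achieve the cocycle condition on triple overlaps. With the transitions chosen as above, the triple-overlap defects form a \v{C}ech $2$-cocycle $o$ with values in $\cT^0_X \otimes_\KK J$, because each defect restricts to the identity modulo $J$ and the automorphism group of any deformation fixing its reduction modulo $J$ is canonically identified with $\Gamma(-, \cT^0_X) \otimes_\KK J$. Different choices of lifts of the transitions differ by an element of $C^1(\mathcal{W}, \cT^0_X) \otimes_\KK J$, which modifies $o$ by a \v{C}ech coboundary, so only the class $[o] \in \check{\rH}^2(\mathcal{W}, \cT^0_X \otimes_\KK J)$ is intrinsic. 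Since $\mathcal{W}$ is a finite affine open cover of the separated scheme $X$ and $\cT^0_X$ is quasi-coherent, \v{C}ech and sheaf cohomology agree, so $[o]$ lives in $\rH^2(X, \cT^0_X) \otimes_\KK J = 0$ by hypothesis. Writing $o = \delta \beta$ for some $\beta \in C^1(\mathcal{W}, \cT^0_X) \otimes_\KK J$ and adjusting the transitions by $-\beta$ produces an honest cocycle; gluing along these adjusted transitions yields the required $\zeta \in \Def{X}(A)$, which by construction of the gluing satisfies $\zeta|_Y = \xi$ and $\zeta|_{U_i} = \xi_i$, completing the induction.
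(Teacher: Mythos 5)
Your proof is correct and follows the same strategy as the paper's: induction on $\dim_\KK \frakm_A$, reduction to a small extension, gluing of local deformations over the cover $\{Y\}\cup\{U_i\}$ using the vanishing of $\rH^2(X,\cT^0_X)$, with the triviality of deformations of the smooth affine overlaps $Y\cap U_i$ doing the heavy lifting. The only difference is presentational: where the paper cites \cite[Proposition~3.12]{osserman} for the statement that the datum of compatible local deformations has its gluing obstruction in $J\otimes_\KK\rH^2(X,\cT^0_X)$, you unpack that reference into an explicit \v{C}ech $2$-cocycle argument.
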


In order to prove this claim, we need to construct a deformation $\eta \in \Def{X}(A)$ such that the restriction $\eta \vert_Y$ is isomorphic to $\xi$ and the restriction $\eta \vert_{U_i}$ is trivial for each $i \in I$.
We proceed by induction on $\dim_\KK \frakm_A$, where $\frakm_A$ is the maximal ideal of $A$.
The case $\dim_\KK \frakm_A = 0$ is obvious.

For the inductive step we consider a small extension $f \colon A' \to A$ in $\art$, with kernel $J$, and we assume that the claim holds for $A$.
We have an arbitrary deformation $\xi' \in \Def{Y}(A')$ and trivial deformations $\xi_i' \in \Def{U_i}(A')$ and we want to show that there exists a deformation $\eta' \in \Def{X}(A')$ such that $\eta' \vert_Y \simeq \xi'$ and $\eta' \vert_{U_i}$ is trivial for each $i$.
By inductive hypothesis there exists a deformation $\eta \in \Def{X}(A)$ such that $\eta \vert_Y \simeq f_*(\xi')$ and $\eta \vert_{U_i}$ is trivial for each $i$.

	We consider the datum of the trivial deformations of $U_i$ over $A'$ and of the deformation $\xi'$; these are deformations, over $A'$, of the elements of the affine open cover $\cU \cup \{Y\}$ of $X$ and they are pairwise compatible, i.e.\ their restrictions to the double intersections $U_i \cap U_j$ and $U_i \cap Y$ are pairwise isomorphic.
This is true because the $U_i \cap Y$ are smooth and affine, therefore each deformation of $U_i \cap Y$ is trivial.
By \cite[Proposition~3.12]{osserman} this datum induces an obstruction class in $J \otimes_\KK \rH^2(X, \cT^0_X)$.
The vanishing of $\rH^2(X, \cT^0_X)$ implies that our datum is obtained from a deformation $\eta'$ of $X$ over $A'$.
This concludes the proof of Claim~\ref{claim:image}.

Combining the injectivity of \eqref{eq:product_restriction_map} and Claim~\ref{claim:image} we have that the assignment
\[
\xi \mapsto \left( \xi, (\text{trivial deformation of } U_i)_{i \in I} \right)
\]
for all deformations $\xi$ of $Y$ produces a section of the restriction map~\eqref{eq:restriction_map}.
	This concludes the proof of Proposition~\ref{prop:section_deformation}.
\end{proof}

\begin{remark} \label{rmk:section_proposition_finitely_many}
	It is clear that we can generalise Proposition~\ref{prop:section_deformation} to  finitely many isolated points of the non-smooth locus of $X$.
\end{remark}


\subsection{$\QQ$-Gorenstein deformations} \label{sec:qG}
This section can be omitted if one is interested in the proof of Theorems~\ref{thm:main_toric_singularity} and \ref{thm:main_no_murphys_law_for_toric_singularities} only; it will be applied only in \S\ref{sec:toric_Fano}.
	
Fix a field $\KK$ of characteristic $0$ and fix a $\QQ$-Gorenstein normal variety $X$ over $\KK$.
The \emph{canonical cover stack} of $X$ is a certain Deligne--Mumford stack $\frakX$ which has $X$ as a coarse moduli space and which is   defined in \cite{abramovich_hassett_stable_varieties}.
Its fundamental properties are: the construction of $\frakX$ is local on $X$,
the stack $\frakX$ is normal and Gorenstein,
and the structure morphism $\epsi \colon \frakX \to X$ is an isomorphism on the Gorenstein locus of $X$.
Notice that $\epsi$ is proper and cohomologically affine.

By definition \cite{kollar_shepherd_barron}, $\QQ$-Gorenstein deformations of $X$ are deformations of the canonical cover stack $\frakX$ of $X$:
\[
\DefqG{X} = \Def{\frakX}.
\]
Therefore $\QQ$-Gorenstein deformations of $X$ are controlled by the $\KK$-vector spaces $\TTqG{i}{X} = \Ext^i(\Omega_\frakX, \cO_\frakX)$ and the coherent $\cO_X$-modules $\cTqG{i}{X} = \epsi_\star \cExt^i(\Omega_\frakX, \cO_\frakX)$, for $i=0,1,2$.
It is always true that $\TTqG{0}{X} = \TT^0_X$ and $\cTqG{0}{X} = \cT^0_X$.

If $X$ is Gorenstein, then the canonical cover stack is isomorphic to $X$, hence every deformation of $X$ is $\QQ$-Gorenstein, i.e.\ $\DefqG{X} = \Def{X}$.

Now, in the context of $\QQ$-Gorenstein deformations, we consider restriction maps similarly to \S\ref{sec:restriction_maps}.
	
\begin{proposition} \label{prop:section_deformation_qG}
Let $\KK$ be a field of characteristic $0$ and let $X$ be a $\QQ$-Gorenstein normal variety over $\KK$.
Let $x_1, \dots, x_t$ be finitely many isolated points of the non-smooth locus of $X$. For each $i=1,\dots, t$, let $Y_i$ be an affine open neighbourhood of $x_i$ in $X$ such that $Y_i \setminus \{ x_i \}$ is smooth.
If $\rH^1(X, \cT^0_X) = 0$ and  $\rH^2(X, \cT^0_X) = 0$, then the product of the restriction maps
\begin{equation*} 
	\DefqG{X} \longrightarrow \prod_{i=1}^t \DefqG{Y_i}
\end{equation*}
is surjective and admits a section.
\end{proposition}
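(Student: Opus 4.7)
The plan is to run the argument of Proposition~\ref{prop:section_deformation} on the canonical cover stack $\frakX$ rather than on $X$. By definition $\DefqG{X} = \Def{\frakX}$, and since the canonical cover stack is constructed locally on $X$, the open substacks $\frakY_i := \epsi^{-1}(Y_i)$ are the canonical cover stacks of the $Y_i$; hence $\DefqG{Y_i} = \Def{\frakY_i}$, and the restriction map of $\QQ$-Gorenstein deformations coincides with the restriction of ordinary deformations of the stacks. Because $\epsi$ is cohomologically affine and $\epsi_\star \cT^0_\frakX = \cTqG{0}{X} = \cT^0_X$, the Leray spectral sequence collapses and the hypothesis $\rH^j(X, \cT^0_X) = 0$ for $j = 1, 2$ translates into $\rH^j(\frakX, \cT^0_\frakX) = 0$ for $j = 1, 2$.

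Next I would prove stacky analogues of Lemma~\ref{lem:injective_product_restrictions} and of Claim~\ref{claim:image}. Pick a finite affine open cover $\cU = \{U_k\}_{k \in K}$ of the smooth locus $X \setminus \{x_1, \ldots, x_t\}$; each $U_k$ is simultaneously an open substack of $\frakX$, since the canonical cover stack agrees with $X$ on the smooth (hence Gorenstein) locus. After shrinking the $Y_i$ so that they are pairwise disjoint, the collection $\{\frakY_1, \ldots, \frakY_t\} \cup \cU$ is a finite open cover of $\frakX$ whose pairwise intersections $\frakY_i \cap U_k$ and $U_k \cap U_\ell$ lie in the smooth scheme $X \setminus \{x_1, \ldots, x_t\}$. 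Repeating the inductive argument of Lemma~\ref{lem:injective_product_restrictions} with $\frakX$ in place of $X$ (the pseudo-torsor structure on liftings under $J \otimes_\KK \TT^1_\frakX$ and the local-to-global Ext spectral sequence are available for DM stacks) yields injectivity of
\[
\Def{\frakX} \longrightarrow \prod_{i=1}^t \Def{\frakY_i} \times \prod_{k \in K} \Def{U_k}.
\]

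To produce a section, I would adapt Claim~\ref{claim:image}: for arbitrary deformations $\xi_i \in \Def{\frakY_i}(A)$ and the trivial deformation of each $U_k$ over $A$, construct by induction on $\dim_\KK \frakm_A$ a deformation $\eta \in \Def{\frakX}(A)$ restricting to all of them. At the inductive step, the restrictions of the $\xi_i$ and of the trivial deformations to the double intersections are automatically isomorphic (those intersections are smooth and affine, and the $\frakY_i$ have been arranged to be pairwise disjoint); the stacky analogue of the gluing obstruction used in the proof of Proposition~\ref{prop:section_deformation} attaches to this compatible local datum an obstruction class in $J \otimes_\KK \rH^2(\frakX, \cT^0_\frakX) = 0$, so the datum is globalised to a deformation of $\frakX$ over $A'$. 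Injectivity then makes the assignment $(\xi_1, \ldots, \xi_t) \mapsto \eta$ well-defined; projecting away the trivial factors on the $U_k$ gives the required section of $\DefqG{X} \to \prod_i \DefqG{Y_i}$, and the existence of a section implies surjectivity.

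The main obstacle I anticipate is verifying that the obstruction-theoretic ingredients used in \S\ref{sec:restriction_maps} — the pseudo-torsor structure on the set of liftings, the local-to-global Ext spectral sequence, and the cocycle-to-obstruction correspondence — carry over verbatim from separated schemes of finite type to the Deligne--Mumford stack $\frakX$. This is standard but requires a careful statement of infinitesimal deformation theory for the canonical cover stack. A minor additional issue is to justify shrinking the $Y_i$ so that they are pairwise disjoint, which is harmless because the $x_i$ are finitely many distinct points and the statement depends only on pointed affine neighbourhoods of them.
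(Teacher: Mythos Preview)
Your overall strategy is exactly the paper's: pass to the canonical cover stack $\frakX$, transport the vanishing of $\rH^1$ and $\rH^2$ of $\cT^0$ via cohomological affineness of $\epsi$, and rerun Lemma~\ref{lem:injective_product_restrictions} and Proposition~\ref{prop:section_deformation} on $\frakX$.

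There is, however, one genuine slip. You call $X \setminus \{x_1, \dots, x_t\}$ ``the smooth locus'' and use this to assert that each affine $U_k \subset X$ is already an open substack of $\frakX$. The proposition does \emph{not} assume that the $x_i$ exhaust $\Sing(X)$; the paper explicitly remarks that $X \setminus \{x_1, \dots, x_t\}$ can be singular (even non-isolated), and in the main application to the toric Fano $3$-folds coming from $P_F$ the lateral facets do produce further singularities. Hence $\epsi^{-1}(U_k)$ need not be a scheme, let alone affine, and likewise the $\frakY_i$ are genuine Deligne--Mumford stacks. Your cover of $\frakX$ is therefore not an affine cover, and the \v{C}ech/obstruction arguments do not apply to it as stated.

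The paper's remedy, which is precisely the point you are missing, is that each $\epsi^{-1}(U_k)$ and each $\frakY_i$ is a separated Deligne--Mumford stack whose coarse moduli space is affine; since $\epsi$ is cohomologically affine, quasi-coherent sheaves on these open substacks have no higher cohomology, so \v{C}ech cohomology with respect to this cover still computes $\rH^\bullet(\frakX, -)$. With that one observation in place, the pseudo-torsor structure on liftings, the local-to-global spectral sequence, and the gluing obstruction in $J \otimes_\KK \rH^2(\frakX, \cT^0_\frakX)$ all go through, and the rest of your argument is fine. Your side remark about shrinking the $Y_i$ to be pairwise disjoint is harmless, for the reason you give.
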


We remark that the points $x_1, \dots, x_t$ need not be all the singular points of $X$. In other words, $X \setminus \{ x_1, \dots, x_t \}$ can be singular and can have non-isolated singularities.

\begin{proof}[Proof of Proposition~\ref{prop:section_deformation_qG}]
Consider the canonical cover stack $\epsi \colon \frakX \to X$.
We have that $\frakX$ is a separated Deligne--Mumford stack of finite type over $\KK$.
Since $\epsi$ is cohomologically affine and $\epsi_\star \cT^0_\frakX = \cT^0_X$, the hypothesis of the vanishing of $\rH^1(X, \cT^0_X)$ and of  $\rH^2(X, \cT^0_X)$
implies the vanishing of $\rH^1(\frakX, \cT^0_\frakX)$ and of $\rH^2(\frakX, \cT^0_\frakX)$.

For each $i$, take the preimage $\frakY_i = \epsi^{-1} (Y_i) \subseteq \frakX$.
Obviously $\frakY_i \setminus \{ x_i \} \simeq Y_i \setminus \{ x_i \}$ is smooth.
If we were able to apply Proposition~\ref{prop:section_deformation} (and Remark~\ref{rmk:section_proposition_finitely_many}) verbatim to $\frakX$, we would get
that the product of the restriction maps
\begin{equation*} 
	\Def{\frakX} \longrightarrow \prod_{i=1}^t \Def{\frakY_i}
\end{equation*}
is surjective and admits a section -- this would be a reformulation of the thesis.

Therefore we need to convince ourselves that, with appropriate (but small) modifications, we can apply Lemma~\ref{lem:injective_product_restrictions} and Proposition~\ref{prop:section_deformation} to separated Deligne--Mumford stacks. So now we consider a separated Deligne--Mumford stack $\frakX$ of finite type over a characteristic zero field $\KK$ and we consider its coarse moduli space $\epsi \colon \frakX \to X$.

If we take an affine open cover $\cU_X = \{ U_i \}_{i}$ of $X$, by taking the preimages we obtain a Zariski open cover $\cU_\frakX = \{ \epsi^{-1}(U_i) \}_{i}$ of $\frakX$. If $\frakX$ is not a scheme, there exists an element in the open cover $\cU_\frakX$ which is not an affine scheme.
However, each element of $\cU_\frakX$ is a separated Deligne--Mumford stack whose coarse moduli space is an affine scheme, namely one of the $U_i$'s. Since $\epsi^{-1}(U_i) \to U_i$ is cohomologically affine, we have that every quasi-coherent sheaf on $\epsi^{-1}(U_i)$  does not have cohomology in positive degree.
Therefore, in order to compute the cohomology of quasi-coherent sheaves of $\frakX$, we can use the \v{C}ech cohomology with respect to $\cU_\frakX$.
This is the only observation needed to make Lemma~\ref{lem:injective_product_restrictions} and Proposition~\ref{prop:section_deformation} work for separated Deligne--Mumford stacks of finite type over a field of characteristic $0$.
\end{proof}

\section{Algebraic intermezzo}
\label{sec:algebra}
\subsection{Standard graded algebras} \label{sec:subsection_standard_graded}

Fix an arbitrary field $\KK$.

\begin{definition}
A \emph{standard graded $\KK$-algebra} is an $\NN$-graded $\KK$-algebra $A = \bigoplus_{n \geq 0} A_n$  of finite type over $\KK$, generated in degree $1$, and with $A_0 = \KK$.
\end{definition}

\begin{lemma} \label{lem:minimal_primes_completion}
	Let $A$ be a standard graded $\KK$-algebra.
	Let $(R, \frakm)$ be the completion of $A$ at the maximal ideal of $A$ generated by the homogeneous elements with positive degree. Then the following statements hold.
	\begin{enumerate}
		\item The associated graded ring $\mathrm{gr}_\frakm R$ is isomorphic to $A$ as an $\NN$-graded $\KK$-algebra.
		\item $A$ is a domain if and only if $R$ is a domain.
		\item If $P$ is a homogeneous prime ideal of $A$, then the extension $PR$ is a prime ideal of $R$ and $PR \cap A = P$.
		\item Extension and contraction of ideals give a $1$-to-$1$ correspondence between minimal primes of $A$ and minimal primes of $R$.
	\end{enumerate}
\end{lemma}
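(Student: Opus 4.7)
The plan is to set $\mathfrak{M}=\bigoplus_{n\geq 1} A_n$, so that $R=\widehat{A_{\mathfrak{M}}}$ and $\frakm=\mathfrak{M}R$, and to exploit the elementary identity $\mathfrak{M}^k=\bigoplus_{n\geq k} A_n$, which holds because $A$ is generated in degree one. This identity gives $\mathfrak{M}^k/\mathfrak{M}^{k+1}\cong A_k$, hence $\mathrm{gr}_{\mathfrak{M}}A\cong A$ as $\NN$-graded $\KK$-algebras; since associated graded rings are preserved under $\frakm$-adic completion, this yields (1). The same bottom-degree comparison shows $A\hookrightarrow A_{\mathfrak{M}}$: if $s\in A\setminus\mathfrak{M}$ and $sa=0$ for a nonzero $a$ with bottom homogeneous piece $a_k$, then $s_0 a_k=0$ with $s_0\in\KK^\times$, a contradiction. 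Combined with $A_{\mathfrak{M}}\hookrightarrow R$ (Krull's intersection theorem), this gives $A\hookrightarrow R$.

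For (2), the direction ``$R$ domain $\Rightarrow A$ domain'' is then immediate. For the converse, (1) gives $\mathrm{gr}_{\frakm}R$ a domain, and the classical leading-form argument (valid because $\bigcap_n \frakm^n=0$) shows $R$ itself is a domain. For (3), $A/P$ is again a standard graded $\KK$-algebra, now with homogeneous maximal ideal $\mathfrak{M}/P$; since completion commutes with quotients, $R/PR\cong \widehat{(A/P)_{\mathfrak{M}/P}}$ is the ring associated to $A/P$ in the same way $R$ is associated to $A$. Applying (2) to the graded domain $A/P$ gives $R/PR$ a domain, so $PR$ is prime, and the equality $PR\cap A=P$ follows from the injection $A/P\hookrightarrow R/PR$ obtained by the same bottom-degree argument used in (2).

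For (4), I would first recall that the nilradical of a homogeneous ideal is homogeneous, so the minimal primes $P_1,\ldots,P_r$ of $A$ are all homogeneous, and by (3) the extensions $P_i R$ are pairwise distinct primes of $R$ with contractions $P_i$. The map $A\to R$ is flat (a localisation followed by the completion of a noetherian local ring), so going-down forces every minimal prime $Q$ of $R$ to contract to some minimal $P_i$, giving $P_i R\subseteq Q$. It then suffices to prove that $P_i R$ itself is minimal in $R$. By the flat local dimension formula applied to $A_{P_i}\to R_{P_i R}$,
\[
\dim R_{P_i R} \;=\; \dim A_{P_i} \;+\; \dim\bigl(R_{P_i R}/P_i R_{P_i R}\bigr),
\]
and both summands vanish: the first by minimality of $P_i$, and the second because by (3) the ring $R/P_i R$ is a domain, so $P_i R_{P_i R}$ is the generic point of $\Spec R/P_i R$.

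The main obstacle is step (4): a noetherian local ring can in general acquire extra minimal primes upon completion, so the argument must really use the graded hypothesis. In the plan above the graded structure enters twice, once to ensure that the minimal primes of $A$ are homogeneous (so that (3) applies to them), and once through (3) itself, which is what forces each fibre of $\Spec R\to\Spec A_{\mathfrak{M}}$ over the $P_i$ to consist of a single prime.
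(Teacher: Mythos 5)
Your proof is correct, and for parts (1)--(3) it runs along essentially the same lines as the paper: the identity $\mathfrak{M}^k=\bigoplus_{n\geq k}A_n$ (generation in degree one) gives $\mathrm{gr}_{\mathfrak m}R\cong A$, the converse direction of (2) is the classical ``graded ring a domain $\Rightarrow$ ring a domain'' lemma, and (3) is obtained by passing to $A/P$ and re-using (2), plus a flatness argument for $PR\cap A=P$.

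Part (4) is where you diverge. The paper leaves (4) to the reader, asserting it is an easy consequence of (3) and homogeneity of minimal primes; the intended argument is elementary and prime-theoretic: if $Q$ is a minimal prime of $R$, then $Q\cap A$ contains some minimal prime $P_i$ of $A$, so the prime $P_iR\subseteq Q$ forces $Q=P_iR$ by minimality of $Q$; conversely each $P_iR$ contains a minimal prime $Q=P_jR$, and contracting yields $P_j\subseteq P_i$, hence $P_j=P_i$ and $P_iR$ is minimal; injectivity of extension is $P_iR\cap A=P_i$ from (3). Your version replaces this with going-down for the flat map $A\to R$ and then the dimension formula for the flat local homomorphism $A_{P_i}\to R_{P_iR}$ to conclude $\dim R_{P_iR}=0$. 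That argument is correct (both summands vanish, the fibre being the fraction field of the domain $R/P_iR$), but the machinery is more than is needed: once (3) guarantees that $P_iR$ is prime, neither going-down nor the dimension formula is required, since ``a prime contained in a minimal prime equals it'' already closes the loop. Your closing remark correctly identifies where the graded hypothesis intervenes (homogeneity of minimal primes and statement (3)); that is indeed the point, and it is what prevents the pathology of a local ring acquiring extra minimal primes on completion.
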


\begin{proof}
	Let $M \subset A$ denote the maximal ideal of $A$ generated by the homogeneous elements with positive degree.
	Let $A_M$ be the localisation of $A$ at $M$ and let $M A_M$ be the maximal ideal of $A_M$.
	Since we are working over the field $\KK \simeq A/M = A_0$ it is easy to show that the localisation homomorphism $A \to A_M$ is injective. 
	The completion homomorphism $A_M \to R$ is faithfully flat and injective.
	Therefore $A \to R$ is flat and injective.
	
	(1) For every $n \geq 0$, we have the following chain of isomorphisms:
\begin{equation*}
\frakm^n / \frakm^{n+1} \simeq (MA_M)^n / (M A_M)^{n+1} 
\simeq M^n / M^{n+1} \otimes_A A_M 
\simeq M^n / M^{n+1} 
\simeq A_n.
\end{equation*}
	By taking the direct sum for $n\geq 0$ and observing that these isomorphisms are compatible with the multiplicative structure we conclude. See \cite[Theorem~13.8(iii)]{matsumura}.
	
	(2) Since $A$ is a subring of $R$, if $R$ is a domain, then $A$ is a domain.
	In order to prove the opposite implication, let us assume that $A$ is a domain.
	By (1) we have that $(R, \frakm)$ is a local noetherian ring such that $\mathrm{gr}_\frakm R$ is a domain; by \cite[Lemma~11.23]{atiyah_macdonald} $R$ is a domain and we conclude.
	
	(3) Let $P$ be a homogeneous prime of $A$. Consider the short exact sequence
	\[
	0 \to P \to A \to A/P \to 0.
	\]
	Its $M$-adic completion is the short exact sequence
	\[
	0 \to PR \to R \to R/PR \to 0.
	\]
	Since $R/PR$ is the completion of the domain $A/P$ at its maximal homogeneous ideal, by (2) we have that $R/PR$ is a domain, hence $PR$ is a prime of $R$.
	It remains to prove that $PR \cap A = P$.
	
	Since $A_M \into R$ is faithfully flat, by \cite[Exercise~3.16]{atiyah_macdonald} we have $PR \cap A_M = (PA_M)R \cap A_M = P A_M$. By contracting this to $A$ we get $PR \cap A = P A_M \cap A = P$ and we conclude.
	
	(4) is an easy consequence of (3) and of the well known fact that says that the minimal primes of $A$ are homogeneous.
	We leave the proof to the reader.
\end{proof}

\subsection{Segre products} \label{sec:segre_products}
This section can be omitted if one is interested in the proof of Theorems~\ref{thm:main_toric_singularity}, \ref{thm:main_no_murphys_law_for_toric_singularities} and \ref{thm:main_hull_K-moduli_stack} only; it will be be used only in the proof of Theorem~\ref{thm:main_many_branches} in \S\ref{sec:arbitrarily_many_branches}.

Fix a field $\KK$. We consider the following construction dating back to \cite{chow_unmixedness}:

\begin{definition}
	If $A = \bigoplus_{n \geq 0} A_n$ and $B = \bigoplus_{n \geq 0} B_n$ are two standard graded $\KK$-algebras, then their \emph{Segre product} is the standard graded $\KK$-algebra $A \# B := \bigoplus_{n \geq 0} A_n \otimes_\KK B_n$.
\end{definition}

The Segre product is a direct summand of the tensor algebra $A \otimes_\KK B$.

\begin{lemma} \label{lem:irr_comp_segre_product}
	Let $\KK$ be an algebraically closed field.
	Let $A$ and $B$ be two standard graded $\KK$-algebras.
	If $A$ has $m$ minimal primes and $B$ has $n$ minimal primes, then the Segre product $A \# B$ has $mn$ minimal primes.
\end{lemma}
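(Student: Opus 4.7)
The plan is to exhibit the $mn$ minimal primes of $A\#B$ explicitly as contractions of the minimal primes of $A\otimes_\KK B$, whose structure is well controlled since $\KK$ is algebraically closed. First I would reduce to the case where $A$ and $B$ are reduced: as the minimal primes of a standard graded $\KK$-algebra are homogeneous, so is the nilradical, and hence $A_{\mathrm{red}}$ and $B_{\mathrm{red}}$ are still standard graded $\KK$-algebras. The induced graded surjection $A\#B\twoheadrightarrow A_{\mathrm{red}}\#B_{\mathrm{red}}$ has nilpotent kernel (a sufficiently high power of the nilradical of $A$ or of $B$ annihilates it), and therefore induces a bijection on minimal primes; so I may assume $A$ and $B$ are reduced.

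Let $P_1,\dots,P_m$ and $Q_1,\dots,Q_n$ be the minimal primes of $A$ and $B$ respectively. Since $\KK$ is algebraically closed, each of $A/P_i$ and $B/Q_j$ is a domain and so is the tensor product $(A/P_i)\otimes_\KK(B/Q_j)$; hence the ideals
\[
N_{ij}:=P_i\otimes B+A\otimes Q_j\subseteq A\otimes_\KK B
\]
are prime. Moreover $A\otimes_\KK B$ is itself reduced, so $\bigcap_{i,j}N_{ij}=0$, and the $N_{ij}$ are precisely the $mn$ minimal primes of $A\otimes_\KK B$. Contracting along $A\#B\hookrightarrow A\otimes_\KK B$, I set $R_{ij}:=N_{ij}\cap(A\#B)$; these are prime ideals of $A\#B$ with $\bigcap_{i,j}R_{ij}=0$, so every minimal prime of $A\#B$ appears as some $R_{ij}$.

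To conclude it suffices to prove that the $R_{ij}$ are pairwise incomparable. Suppose $R_{ij}\subseteq R_{kl}$ with $(i,j)\neq(k,l)$; by symmetry I may assume $i\neq k$. I choose a homogeneous element $a\in P_i\setminus P_k$ of some positive degree $d_1$ and a homogeneous $b\in B\setminus Q_l$ of some positive degree $d_2$; after replacing $a$ by $a^{d_2}$ and $b$ by $b^{d_1}$ I arrange $\deg a=\deg b$, so $a\otimes b\in A\#B$. On the one hand $a\otimes b\in N_{ij}$, and hence $a\otimes b\in R_{ij}\subseteq R_{kl}\subseteq N_{kl}$; on the other hand the image of $a\otimes b$ in the domain $(A/P_k)\otimes_\KK(B/Q_l)$ is the nonzero element $\overline{a}\otimes\overline{b}$, contradicting $a\otimes b\in N_{kl}$.

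I expect the main obstacle to lie in this last degree-matching step: it uses crucially that one can find positive-degree homogeneous elements of $P_i\setminus P_k$ and of $B\setminus Q_l$. The former is automatic once $i\neq k$, since distinct minimal primes are incomparable and every homogeneous element of a minimal prime has positive degree ($A_0=\KK$ is a field). The latter requires $Q_l$ to be strictly contained in the irrelevant ideal $B_+$, which holds whenever $B$ is not Artinian; this non-degeneracy is implicit in the applications of the lemma to come.
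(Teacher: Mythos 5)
Your proof is correct and takes a genuinely different route from the paper's. The paper argues geometrically: it identifies minimal primes of a standard graded algebra with irreducible components of its $\Proj$, uses the natural isomorphism $\Proj(A\#B)\simeq\Proj A\times_\KK\Proj B$ (glued from $(A\#B)_{(f\otimes g)}\simeq A_{(f)}\otimes_\KK B_{(g)}$), and invokes irreducibility of fibred products of irreducible finite-type schemes over an algebraically closed field. You instead stay entirely in the affine/algebraic world: you exhibit the candidate minimal primes of $A\#B$ as contractions $R_{ij}=N_{ij}\cap(A\#B)$ of the minimal primes $N_{ij}=P_i\otimes B+A\otimes Q_j$ of $A\otimes_\KK B$, and then prove pairwise incomparability directly by the nice degree-matching device $a\mapsto a^{d_2}$, $b\mapsto b^{d_1}$, which produces an element of $A\#B$ witnessing $R_{ij}\not\subseteq R_{kl}$. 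Both arguments rest on the same underlying fact (tensor products of finite-type domains over an algebraically closed field are domains), but yours is more explicit and self-contained, while the paper's is shorter by outsourcing the incomparability to the geometry of irreducible components. Two small remarks. First, the sentence ``$A\otimes_\KK B$ is reduced, so $\bigcap N_{ij}=0$'' is slightly backwards as written; the clean way is to note that $A\hookrightarrow\prod_i A/P_i$ and $B\hookrightarrow\prod_j B/Q_j$ give, by flatness over $\KK$, an embedding $A\otimes_\KK B\hookrightarrow\prod_{i,j}(A/P_i)\otimes_\KK(B/Q_j)$ whose kernel is $\bigcap N_{ij}$. Second, you correctly flag the degenerate case where (after passing to the reduction) one factor equals $\KK$: there the statement genuinely fails (e.g.\ $A=\KK[x,y]/(xy)$, $B=\KK$, $A\#B=\KK$ has one minimal prime, not two). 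The paper's proof has exactly the same blind spot, since the dictionary ``minimal primes $\leftrightarrow$ irreducible components of $\Proj$'' breaks when $\Proj$ is empty; so on this point you are at parity, and your explicit acknowledgement of the non-degeneracy hypothesis is a genuine improvement in transparency.
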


\begin{proof}
	The minimal primes of $A$ are $1$-to-$1$ correspondence with the irreducible components of $\Proj A$. The same holds for $B$ and $A \# B$.
	Since $\KK$ is algebraically closed, the fibred product over $\KK$ of two irreducible schemes of finite type over $\KK$ is irreducible. So we conclude with the following observation:
	there exists a natural isomorphism of $\KK$-schemes
	\[
	\Proj A \# B \simeq \Proj A \times_{\Spec \KK} \Proj B;
	\]
	this is obtained by gluing the isomorphisms of affine schemes induced by the $\KK$-algebra isomorphisms
	\[
	(A \# B)_{(f \otimes g)} \simeq A_{(f)} \otimes_\KK B_{(g)}
	\]
	for $f \in A_1$ and $g \in B_1$.
\end{proof}

\begin{remark} \label{rmk:segre_product_and_torus_actions}
Let $\KK$ be a field and let $A$ be a standard graded $\KK$-algebra.
The $\NN$-grading of $A$ gives an action of the torus $\Gm = \Spec \KK [t^{\pm}]$ on the $\KK$-scheme $\Spec A$.
Consider $\Spec A \times_{\Spec \KK} \Spec A = \Spec (A \otimes_\KK A)$ equipped with the $\Gm$-action given by the 
action above on the first factor and the inverse action on the second factor; in other words we consider the $\ZZ$-grading of $A \otimes_\KK A$ given by $\deg (a \otimes b) = \deg a - \deg b$, for $a$ and $b$ homogeneous elements of $A$.
The subalgebra of $A \otimes_\KK A$ made up of the elements of degree $0$ is exactly the Segre product $A \# A$.
This shows that $\Spec (A \# A)$ is the quotient of $\Spec (A \otimes_\KK A)$ under the action of $\Gm$.
\end{remark}

\subsection{Generalised Newton identities} \label{sec:newton}

In what follows, $n$ denotes a positive integer.

\begin{lemma} \label{lem:newton_1}
	Let $A$ be a ring. Fix $a_1, \dots, a_n$ and $x_1, \dots, x_n$ in $A$.
	For each integer $k \geq 1$, consider
	\[
	\alpha_k = \sum_{i=1}^n a_i x_i^k \in A.
	\]
	Then the ideals
	\begin{equation*}
	\left( \alpha_k \mid k \geq 1 \right) \qquad \text{and} \qquad \left( \alpha_k \mid 1 \leq k \leq n \right)
	\end{equation*}
	of $A$ coincide.
\end{lemma}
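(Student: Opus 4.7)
The inclusion $(\alpha_k \mid 1 \leq k \leq n) \subseteq (\alpha_k \mid k \geq 1)$ is obvious, so the content of the lemma is the reverse inclusion: for every $k \geq n+1$, we want $\alpha_k \in (\alpha_1, \dots, \alpha_n)$. The plan is to derive a linear recurrence on the $\alpha_k$ with coefficients in $A$ from the obvious identity satisfied by each $x_i$, and then induct on $k$.

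The key device is the \emph{universal characteristic polynomial} of $x_1, \dots, x_n$: writing $e_1, \dots, e_n \in A$ for the elementary symmetric polynomials in $x_1, \dots, x_n$, the product $\prod_{i=1}^n (T - x_i) \in A[T]$ equals $T^n - e_1 T^{n-1} + e_2 T^{n-2} - \cdots + (-1)^n e_n$. Substituting $T = x_i$ gives, for each $i \in \{1, \dots, n\}$, the identity
\[
x_i^n = e_1 x_i^{n-1} - e_2 x_i^{n-2} + \cdots + (-1)^{n-1} e_n
\]
in $A$. Multiplying this by $a_i x_i^{k-n}$ for an arbitrary integer $k \geq n$ and summing over $i$, I obtain the recurrence
\[
\alpha_k = e_1 \alpha_{k-1} - e_2 \alpha_{k-2} + \cdots + (-1)^{n-1} e_n \alpha_{k-n}
\]
valid in $A$ for every $k \geq n$.

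From this recurrence the conclusion is immediate by strong induction on $k \geq n+1$: the base case $k = n+1$ expresses $\alpha_{n+1}$ as an $A$-linear combination of $\alpha_1, \dots, \alpha_n$, and for $k \geq n+2$ each $\alpha_{k-j}$ with $1 \leq j \leq n$ either is one of $\alpha_1, \dots, \alpha_n$ or, by inductive hypothesis, already lies in $(\alpha_1, \dots, \alpha_n)$. Hence $\alpha_k \in (\alpha_1, \dots, \alpha_n)$ for all $k \geq 1$, which gives the desired equality of ideals.

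I do not anticipate any real obstacle here; the only mild subtlety is to take the recurrence starting from $k \geq n+1$ (rather than $k \geq n$), because for $k = n$ the recurrence involves $\alpha_0 = \sum_i a_i$, which is not among the generators we are allowed to use. Starting the induction at $k = n+1$ sidesteps this cleanly.
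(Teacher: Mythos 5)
Your proof is correct and is essentially the same as the paper's: both derive the recurrence $\alpha_k = e_1\alpha_{k-1} - e_2\alpha_{k-2} + \cdots + (-1)^{n-1}e_n\alpha_{k-n}$ by evaluating $\prod_i(T-x_i)$ at $T=x_i$, multiplying by $a_ix_i^{k-n}$, summing over $i$, and then inducting on $k>n$. The only differences are cosmetic (the paper writes $s_r=(-1)^r e_r$), and your care about starting the induction at $k=n+1$ to avoid $\alpha_0$ matches the paper's ``fix $k>n$.''
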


\begin{proof}
	We will prove a generalised version of Newton's identities of symmetric functions.
	Set
	\begin{equation*}
	s_r = (-1)^r \sum_{1 \leq i_1 < \dots < i_r \leq n} x_{i_1} \cdots x_{i_r}
	\end{equation*}	
	for each $r = 1, \dots, n$.
	Consider the polynomial
	\begin{equation*}
	f(t) = \prod_{i=1}^{n} (t-x_i) = t^n + s_1 t^{n-1} + \cdots + s_0 \in A[t].
	\end{equation*}
	Obviously we have the equalities
	\begin{gather*}
		x_1^n + s_1 x_1^{n-1} + \cdots + s_n = f(x_1) = 0, \\
		\vdots \\
		x_n^n + s_1 x_n^{n-1} + \cdots + s_n = f(x_n) = 0.
	\end{gather*}
	
	Now fix $k>n$. We multiply the $i$th equality above by $a_i x_i^{k-n}$ and get
	\begin{gather*}
		a_1 x_1^k + s_1 a_1 x_1^{k-1} + \cdots + s_n a_1 x_1^{k-n}  = 0, \\
		\vdots \\
		a_n x_n^k + s_1 a_n x_n^{k-1} + \cdots + s_n a_n x_n^{k-n}  = 0. 
	\end{gather*}
	Adding these equalities together, we obtain
	\begin{equation*}
	\alpha_k + s_1 \alpha_{k-1} + \cdots + s_n \alpha_{k-n} = 0.
	\end{equation*}	
	This implies that, for each $k>n$, $\alpha_k$ is in the ideal $(\alpha_{k-1}, \dots, \alpha_{k-n})$.
	With an obvious inductive argument, we have that $\alpha_k$ is in the ideal $(\alpha_n, \dots, \alpha_1)$, for each $k>n$.
\end{proof}

\begin{lemma} \label{lem:newton_2}
Let $\KK$ be a field and let $A$ be a $\KK$-algebra. Consider a $2 \times n$ matrix
\[
\begin{pmatrix}
a_1 & \cdots & a_n \\
b_1 & \cdots & b_n
\end{pmatrix}
\]
with entries in $\KK$ and of rank $2$.
Fix $x_1, \dots, x_n \in A$ and,
for each integer $k \geq 1$, consider
\[
\alpha_k = \sum_{i=1}^n a_i x_i^k \in A \quad \text{and} \quad \beta_k = \sum_{i=1}^n b_i x_i^k \in A.
\]
Then the ideals
\begin{equation*}
\left( \alpha_k, \beta_k \mid k \geq 1 \right) \qquad \text{and} \qquad \left( \alpha_k, \beta_k \mid 1 \leq k \leq n-1 \right)
\end{equation*}
of $A$ coincide.
\end{lemma}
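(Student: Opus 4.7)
The plan is to reduce the statement to Lemma \ref{lem:newton_1} by taking two suitable $\KK$-linear combinations of the sequences $(\alpha_k)$ and $(\beta_k)$ that, respectively, eliminate the contributions of $x_1^k$ and of $x_2^k$. Because the $2 \times n$ matrix has rank $2$, some $2 \times 2$ minor is invertible; permuting the columns does not change the sums $\sum_i a_i x_i^k$ and $\sum_i b_i x_i^k$, since these are symmetric in the index $i$, so I may assume $D := a_1 b_2 - a_2 b_1 \neq 0$.

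Concretely, I would set
\[
\gamma_k^{(1)} := b_1 \alpha_k - a_1 \beta_k = \sum_{i=2}^n (b_1 a_i - a_1 b_i)\, x_i^k,
\qquad
\gamma_k^{(2)} := b_2 \alpha_k - a_2 \beta_k = \sum_{\substack{1 \leq i \leq n \\ i \neq 2}} (b_2 a_i - a_2 b_i)\, x_i^k,
\]
in which $\gamma_k^{(1)}$ involves only the $n-1$ elements $x_2, \dots, x_n$ and $\gamma_k^{(2)}$ involves only the $n-1$ elements $x_1, x_3, \dots, x_n$. Applying Lemma \ref{lem:newton_1} to each sequence separately yields the two identities
\[
(\gamma_k^{(j)} \mid k \geq 1) = (\gamma_k^{(j)} \mid 1 \leq k \leq n-1), \qquad j = 1, 2,
\]
and summing these ideals gives $(\gamma_k^{(1)}, \gamma_k^{(2)} \mid k \geq 1) = (\gamma_k^{(1)}, \gamma_k^{(2)} \mid 1 \leq k \leq n-1)$.

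To transport this back to $\alpha_k, \beta_k$, observe that the map $(\alpha_k, \beta_k) \mapsto (\gamma_k^{(1)}, \gamma_k^{(2)})$ is given by the $\KK$-matrix $\begin{pmatrix} b_1 & -a_1 \\ b_2 & -a_2 \end{pmatrix}$, whose determinant equals $D \neq 0$. Hence the $\KK$-linear spans, and therefore the ideals generated, by $\{\alpha_k, \beta_k\}$ and by $\{\gamma_k^{(1)}, \gamma_k^{(2)}\}$ coincide, both when $k$ ranges over $\{k \geq 1\}$ and when $k$ ranges over $\{1, \dots, n-1\}$. Chaining the three equalities gives the desired $(\alpha_k, \beta_k \mid k \geq 1) = (\alpha_k, \beta_k \mid 1 \leq k \leq n-1)$.

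There is no serious obstacle: the result is really a quantitative refinement of Lemma \ref{lem:newton_1}, and the only delicate point is organizing the two linear combinations so that each kills a different one of the $x_i$'s, which is exactly what the rank $2$ hypothesis is good for (rank $1$ would allow at most one elimination, yielding only the bound $k \leq n-1$ already implied by Lemma \ref{lem:newton_1} applied to a single sequence). No genuinely new symmetric-function identity is needed.
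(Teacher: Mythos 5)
Your proof is correct and takes essentially the same approach as the paper's: both eliminate one $x_i$ at a time via a rank-$2$ linear combination $b_j\alpha_k - a_j\beta_k$ and then invoke Lemma~\ref{lem:newton_1} on the resulting $(n-1)$-variable sums. The organization differs slightly — the paper reduces to showing $\alpha_n, \beta_n \in I$ and solves a $2\times 2$ linear system for them at the end, whereas you package the same $2\times 2$ minor as a single invertible $\KK$-linear change of basis $(\alpha_k,\beta_k)\leftrightarrow(\gamma_k^{(1)},\gamma_k^{(2)})$ applied uniformly in $k$, which is a clean way to conclude.
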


\begin{proof}
Let $I$ be the ideal on the right.
By Lemma~\ref{lem:newton_1} it is enough to show that $\al_n$ and $\beta_n$ lie in $I$.
For each $k \geq 0$ consider
\[
b_n \al_k - a_n \beta_k = \sum_{i=1}^{n-1} (b_n a_i - a_n b_i) x_i^k.
\]
By Lemma~\ref{lem:newton_1} applied to $b_n a_1 - a_n b_1, \dots, b_n a_{n-1} - a_n b_{n-1}$ and to $x_1, \dots, x_{n-1}$, we have that
\[
b_n \al_n - a_n \beta_n \in \left( b_n \al_{1} - a_n \beta_1, \dots, b_n \al_{n-1} - a_n \beta_{n-1} \right) \subseteq I.
\]
In a completely analogous way, we prove that
$
b_j \al_n - a_j \beta_n \in I
$,
for every $j=1,\dots,n$.
Since the matrix in the statement has rank $2$, there exist two indices $j$ and $h$ such that $b_j a_h - a_j b_h \neq 0$.
From $
b_j \al_n - a_j \beta_n \in I
$
and
$
b_h \al_n - a_h \beta_n \in I
$ we deduce that $\al_n \in I$ and $\beta_n \in I$.
\end{proof}

\begin{remark} \label{rmk:newton_algebra}
Let $\KK$ is a field and let
\[
\begin{pmatrix}
a_1 & \cdots & a_n \\
b_1 & \cdots & b_n
\end{pmatrix}
\]
be a $2 \times n$ matrix
with entries in $\KK$ and of rank $2$.
Consider the polynomial ring $S = \KK[x_1, \dots, x_n]$ and the ideal $I \subseteq S$ generated by
\[
\alpha_k = \sum_{i=1}^n a_i x_i^k  \quad \text{and} \quad \beta_k = \sum_{i=1}^n b_i x_i^k \qquad \text{for } k \geq 1.
\]
By Lemma~\ref{lem:newton_2} $I$ is generated by $\al_1, \beta_1, \dots, \al_{n-1}, \beta_{n-1}$.
The quotient $S/I$ is a standard graded $\KK$-algebra.

The ideal $I$ does not change if we multiply the matrix above by a matrix in $\GL_2(\KK)$ on the left.
The isomorphism class of $S/I$ does not change if we permute the columns of the matrix above.
This implies that the isomorphism class of $S/I$ depends only on the point in $\mathrm{Gr}(2, \KK^n) / \mathfrak{S}_n$, which is the quotient of the Grassmannian $\mathrm{Gr}(2, \KK^n) $ under the action of the symmetric group $\mathfrak{S}_n$.
\end{remark}

Here we consider a slight variation of the construction in Remark~\ref{rmk:newton_algebra}.

\begin{lemma} \label{lem:removing_one_indeterminate}
	Let $\KK$ be a field and consider a $2 \times m$ matrix
\[
\begin{pmatrix}
a_1 & \cdots & a_m \\
b_1 & \cdots & b_m
\end{pmatrix}
\]
with entries in $\KK$ and such that the sum of the columns is the zero vector.
For each $j=1, \dots, m$, consider the polynomial ring $S_j = \KK[x_1, \dots, \widehat{x_j}, \dots, x_m]$ and the ideal $I_j \subseteq S_j$ generated by
\[
\alpha_{j,k} = \sum_{\substack{1 \leq i \leq m \\ i \neq j}}   a_i x_i^k \quad \text{and} \quad \beta_{j,k} =\sum_{\substack{1 \leq i \leq m \\ i \neq j}} b_i x_i^k
\]
for all $k \geq 1$.
Then the standard graded $\KK$-algebras $S_1 / I_1$, \dots, $S_m / I_m$ are all isomorphic.
\end{lemma}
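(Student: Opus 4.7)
The plan is to embed all the rings $S_j$ into a common ambient ring $S = \KK[x_1, \ldots, x_m]$ equipped with the homogeneous ideal $J = (\al_k, \beta_k \mid k \geq 1)$, where $\al_k = \sum_{i=1}^m a_i x_i^k$ and $\beta_k = \sum_{i=1}^m b_i x_i^k$. Setting $x_j = 0$ in $S/J$ kills exactly the $i = j$ contribution in each $\al_k$ and $\beta_k$, so there is a tautological graded isomorphism $S_j/I_j \cong S/(J + (x_j))$. Hence it suffices to show that these $m$ quotients are pairwise isomorphic as graded $\KK$-algebras.

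Fix $j$ and apply the (non-graded) $\KK$-algebra automorphism $\phi$ of $S$ given by $\phi(x_j) = x_j$ and $\phi(x_i) = x_i - x_j$ for $i \neq j$; renaming the variables $x_i$ ($i \neq j$) in the source as $y_i$, the domain becomes $T = \KK[x_j, y_1, \ldots, \widehat{y_j}, \ldots, y_m]$. The decisive computation is the binomial expansion
\[
\phi(\al_k) = a_j x_j^k + \sum_{i \neq j} a_i (y_i + x_j)^k = x_j^k \sum_{i=1}^m a_i + \sum_{l=1}^k \binom{k}{l} x_j^{k-l} \widetilde{\al}_l,
\]
where $\widetilde{\al}_l := \sum_{i \neq j} a_i y_i^l$. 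By the column-sum-zero hypothesis $\sum_{i=1}^m a_i = 0$, so $\phi(\al_k)$ lies in the ideal $(\widetilde{\al}_1, \ldots, \widetilde{\al}_k)$. An analogous computation for $\phi(\beta_k)$, together with the symmetric inverse computation recovering $\widetilde{\al}_l$ and $\widetilde{\beta}_l$ from the $\phi(\al_k)$ and $\phi(\beta_k)$ (again using column-sum-zero), gives $\phi(J) = I_j \cdot T$ after identifying $y_i$ with $x_i$. This yields a $\KK$-algebra isomorphism $S/J \cong (S_j/I_j)[x_j]$ under which $x_j$ is a free polynomial indeterminate over $S_j/I_j$.

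For any other index $k$, the image of $x_k \in S$ in $(S_j/I_j)[x_j]$ equals $x_j + \bar{x}_k$, where $\bar{x}_k$ denotes the class of $x_k$ in $S_j/I_j$. Therefore
\[
S_k/I_k \cong S/(J + (x_k)) \cong (S_j/I_j)[x_j]/(x_j + \bar{x}_k) \cong S_j/I_j,
\]
the last map being the specialisation $x_j \mapsto -\bar{x}_k$. Chasing through the composition, the resulting $\KK$-algebra homomorphism sends $x_i \mapsto \bar{x}_i - \bar{x}_k$ for $i \neq j, k$ and $x_j \mapsto -\bar{x}_k$; since all these images live in degree $1$ of $S_j/I_j$, the isomorphism preserves degrees and is therefore graded.

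The only subtle step in this plan is the binomial-cancellation argument identifying $\phi(J)$ with $I_j \cdot T$: one must run the computation in both directions (to show $\phi(\al_k) \in I_j T$ and conversely to recover each $\widetilde{\al}_l$ from the $\phi(\al_k)$), and each direction crucially depends on the column-sum-zero hypothesis to annihilate the constant contribution in the binomial expansion. Everything else reduces to formal manipulation of graded polynomial rings.
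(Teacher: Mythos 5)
Your proposal is correct, and it rests on exactly the same key computation as the paper's proof: the binomial expansion of $\sum_{i\ne j} a_i(y_i+x_j)^k$, where the column-sum-zero hypothesis kills the $l=0$ term, run in both directions to get equality (not just containment) of the two ideals. The final isomorphism $S_k/I_k\to S_j/I_j$ you obtain, $x_i\mapsto \bar x_i-\bar x_k$ ($i\ne j,k$) and $x_j\mapsto-\bar x_k$, is literally the substitution the paper uses (with $j=1$, $k=m$). The packaging, however, is different: the paper directly writes down a $\KK$-linear isomorphism $S_1\to S_m$ and verifies $\phi(I_1)=I_m$, whereas you embed all the $S_j/I_j$ in a common ring $S/J$ (where $J=(\alpha_k,\beta_k\mid k\ge1)$ in $\KK[x_1,\dots,x_m]$) and observe that $S/J\cong (S_j/I_j)[x_j]$ as a polynomial ring over each quotient simultaneously, after which specialisation at $x_j+\bar x_k=0$ does the rest. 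Your version makes the $\mathfrak S_m$-symmetry of the construction manifest and isolates a cleaner structural statement ($S/J$ is a one-variable polynomial ring over every $S_j/I_j$), at the cost of an extra auxiliary ring; the paper's version is shorter and verifies nothing beyond what the lemma asks for. One small notational slip: you write $\phi(\alpha_k)$ for what is actually $\phi^{-1}(\alpha_k)$ given your declared direction of $\phi$, but the computation itself is right.
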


\begin{proof}
It is enough to show that $S_1 / I_1$ and $S_m / I_m$ are isomorphic.
To avoid confusion, we denote the indeterminates of $S_m$ as $y_1, \dots, y_{m-1}$.
Consider the $\KK$-linear isomorphism $\phiv \colon S_1 \to S_m$ given by $x_2 \mapsto y_2 - y_1$, \dots, $x_{m-1} \mapsto y_{m-1} - y_1$, $x_m \mapsto -y_1$.
Then for every $k \geq 1$
\begin{align*}
\phiv(\al_{1,k}) &= \phiv \left(\sum_{i=2}^m a_i x_i^k  \right) = \sum_{i=2}^{m-1} a_i (y_i-y_1)^k + a_m (-y_1)^k \\
&= \sum_{i=2}^{m-1} a_i      \sum_{l=0}^k {k \choose l}     y_i^l (-y_1)^{k-l}      + a_m (-y_1)^k \\
&= \sum_{l=0}^k {k \choose l} (-y_1)^{k-l} \left( \al_{m,l} - a_1 y_1^l   \right) + a_m (-y_1)^k \\
&= \sum_{l=0}^k {k \choose l} (-y_1)^{k-l} \al_{m,l} + a_m (-y_1)^k \\
&= \sum_{l=1}^k {k \choose l} (-y_1)^{k-l} \al_{m,l} \in I_m.
\end{align*}
Notice that in the last equality we have used $a_1 + \cdots + a_m = 0$. In a completely analogous way we can prove $\phiv(\beta_{1,k}) \in I_m$.
Therefore $\phiv(I_1) \subseteq I_m$. By using the inverse of $\phiv$, one can show $\phiv(I_1) = I_m$.
  \end{proof}

\section{Deformations of isolated toric singularities}
\label{sec:toric_singularities}
\subsection{From polygons to algebras}

Here we consider a construction, due to Altmann~\cite{altmann_versal}, of a standard graded algebra associated to a polygon.
Here $\CC$ denotes the field of complex numbers (see Remark~\ref{rmk:field}).

\begin{construction}[{Altmann~\cite{altmann_versal}}] \label{constr:algebra_A_F}
Let $F$ be a polygon in $\RR^2$ with $m$ edges.
Fix an orientation on $\RR^2$ and a compatible ordering of the edges of $F$: $E_1, \dots, E_m$.
For each $i=1,\dots,m$, let
\[
\begin{pmatrix}
a_i \\ b_i
\end{pmatrix}
\in \RR^2
\]
be the vector associated to the edge $E_i$, i.e.\ the difference between the second vertex of $E_i$ and the first vertex of $E_i$.
In the polynomial ring $\CC[x_1, \dots, x_{m-1}]$ we consider the homogeneous ideal $I_F$ generated by
\[
\sum_{i=1}^{m-1} a_i x_i^k \quad \text{and} \quad \sum_{i=1}^{m-1} b_i x_i^k \quad \text{for } k \geq 1.
\]
We consider the standard graded $\CC$-algebra
\[
A_F := \CC[x_1, \dots, x_{m-1}] / I_F.
\]
\end{construction}

\begin{remark}
Since
\[
\begin{pmatrix}
a_m \\ b_m
\end{pmatrix}
= -
\sum_{i=1}^{m-1}
\begin{pmatrix}
a_i \\ b_i
\end{pmatrix},
\]
by Lemma~\ref{lem:removing_one_indeterminate}, the isomorphism class of the graded $\CC$-algebra $A_F$ depends neither on the ordering of the edges of $F$ nor on the orientation of $\ZZ^2$.

If we apply an affine transformation to $F$, the matrix whose columns are the edges of $F$ is multiplied on the left by an element of $\GL_2(\RR)$. Therefore, by Remark~\ref{rmk:newton_algebra}, the isomorphism class of $A_F$ 
 depends only on the $\GL_2(\RR) \ltimes \RR^2$-equivalence class of the polygon $F$.

Since $F$ has dimension $2$, the matrix
\begin{equation*}
\begin{pmatrix}
	a_1 & \cdots & a_{m-1} \\
	b_1 & \cdots & b_{m-1}
\end{pmatrix}
\end{equation*}
has rank $2$; therefore, by Lemma~\ref{lem:newton_2}, the ideal $I_F$ is generated by
\[
\sum_{i=1}^{m-1} a_i x_i^k \quad \text{and} \quad \sum_{i=1}^{m-1} b_i x_i^k \quad \text{for } 1 \leq k \leq m-2.
\]
\end{remark}

\begin{example} \label{ex:klaus}
	In \cite[\S9]{altmann_versal} there are the following three examples.
If $F$ is the quadrilateral with vertices $(1,1)$, $(-1,0)$, $(-1,-1)$, $(0,-1)$, then $A_F$ is isomorphic to $\CC[x] / (x^2)$.
If $F$ is the pentagon with vertices $(1,0)$, $(0,1)$, $(-1,1)$, $(-1,0)$, $(0,-1)$, then $A_F$ is isomorphic to $\CC[x,y]/(x^2,xy)$.
If $F$ is the hexagon with vertices $(1,0)$, $(1,1)$, $(0,1)$, $(-1,0)$, $(-1,-1)$, $(0,-1)$, then $A_F$ is isomorphic to $\CC[x,y,z]/(xy,xz)$.
\end{example}

\begin{proposition} \label{prop:triangles_and_quadrilaterals}
	Let $F$ be a polygon in $\RR^2$.
	\begin{enumerate}[label=(\roman*)]
		\item If $F$ is a triangle, then $A_F$ is isomorphic to $ \CC$.
		\item If $F$ is a parallelogram, then $A_F$ is isomorphic to $\CC[x]$.
		\item If $F$ is a quadrilateral but not a parallelogram, then $A_F$ is isomorphic to $\CC[x] / (x^2)$.
	\end{enumerate}
\end{proposition}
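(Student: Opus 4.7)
The plan is to exploit the remark following Construction~\ref{constr:algebra_A_F}: since $\dim F = 2$, the $2 \times (m-1)$ edge matrix has rank $2$, so $I_F$ is generated by the relations corresponding to $1 \leq k \leq m-2$.

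For part (i), $m = 3$ and only the two $k = 1$ relations survive in $\CC[x_1, x_2]$. These are linear forms whose $2 \times 2$ coefficient matrix has rank $2$, so together they span the maximal ideal $(x_1, x_2)$, giving $A_F \cong \CC$.

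For (ii) and (iii), $m = 4$ and $A_F = \CC[x_1, x_2, x_3]/I_F$ with $I_F$ generated by the $k = 1$ and $k = 2$ relations. The $k = 1$ relations form a rank-$2$ linear system, so $R := \CC[x_1, x_2, x_3]/(\text{$k=1$ relations})$ is a polynomial ring $\CC[y]$ in one variable; writing $\lambda = (\lambda_1, \lambda_2, \lambda_3)$ for a generator of the $1$-dimensional kernel of the edge matrix $M$, we have $x_i \equiv \lambda_i y$ in $R$. The $k = 2$ relations then become $(\textstyle\sum a_i \lambda_i^2) y^2$ and $(\textstyle\sum b_i \lambda_i^2) y^2$, so $A_F \cong \CC[y]$ when the pair $(\sum a_i \lambda_i^2, \sum b_i \lambda_i^2)$ vanishes, and $A_F \cong \CC[y]/(y^2)$ otherwise. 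The task reduces to deciding when this pair vanishes.

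The pair vanishes iff $(\lambda_1^2, \lambda_2^2, \lambda_3^2) \in \ker M = \CC\lambda$, i.e.\ there is a common scalar $c$ with $\lambda_i(\lambda_i - c) = 0$ for all $i$; rescaling we may assume $c = 1$, so $\lambda \in \{0, 1\}^3 \setminus \{0\}$. I would then rule out six of the seven $0/1$ candidates using the non-degeneracy of $F$: $\lambda \in \{(1,0,0), (0,1,0), (0,0,1)\}$ forces some $e_i = 0$; $(1,1,0)$ and $(0,1,1)$ force two vertices of $F$ to coincide (via $e_1 + e_2 = 0$ or $e_2 + e_3 = 0$); and $(1,1,1)$ gives $e_4 = 0$. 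The surviving case $\lambda = (1, 0, 1)$ is equivalent to $e_1 + e_3 = 0$, hence (using $\sum e_i = 0$) to $e_2 + e_4 = 0$, i.e.\ to $F$ being a parallelogram. For such $F$ a direct substitution gives $\sum a_i \lambda_i^2 = a_1 + a_3 = 0$ and likewise $\sum b_i \lambda_i^2 = 0$, confirming (ii); for any other quadrilateral the pair is nonzero and we obtain (iii). The main obstacle is executing this enumeration carefully and pinning down exactly the non-degeneracy constraints it uses; the algebraic computations themselves are routine.
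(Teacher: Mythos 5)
Your proof is correct but takes a genuinely different route from the paper's. The paper normalizes $F$ by an affine transformation --- a triangle to a unimodular one, a parallelogram to the unit square, and a general quadrilateral to one with vertices $(0,0),(1,0),(1,1),(c,d)$ --- and then computes $I_F$ directly in those coordinates, reducing case (iii) to observing that the ideal $(c(c-1)x^2,\, d(d-1)x^2)$ equals $(x^2)$ under the constraints $c<1$, $d>0$, $(c,d)\neq(1,1)$. Your argument avoids coordinate choices: after killing the two linear relations via $x_i \equiv \lambda_i y$ for $\lambda$ spanning the one-dimensional $\ker M$, the surviving degree-$2$ relations vanish iff $(\lambda_1^2,\lambda_2^2,\lambda_3^2) \in \ker M = \CC\lambda$, which after rescaling means $\lambda \in \{0,1\}^3\setminus\{0\}$; non-degeneracy of $F$ then eliminates every candidate except $(1,0,1)$, which is precisely the parallelogram condition $e_1+e_3=0$ (equivalently $e_2+e_4=0$). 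The linear-algebraic criterion you isolate --- the quadratic part of $I_F$ collapses iff $\ker M$ is spanned by a $0/1$-vector --- is a clean structural observation that the paper's explicit computation does not surface. For quadrilaterals the two routes are of comparable length, but yours is more conceptual and makes the parallelogram dichotomy visible directly in the kernel of the edge matrix rather than in a choice of normal form.
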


\begin{proof}
	(i) Up to affine equivalence we can assume that $F$ is the convex hull of $(0,0)$, $(1,0)$ and $(1,1)$.
	We consider the $2 \times 2$ matrix whose columns are the first $2$ edges of $F$:
	\[
	\begin{matrix}
	x_1 & x_2 \\
	\hline
	1 & 0 \\
	0 & 1
	\end{matrix}
	\]
	where we have used the indeterminates $x_1$ and $x_2$ to label the edges.
	The ideal $I_F \subseteq \CC[x_1, x_2]$ is generated by $x_1$ and $x_2$.
	
\begin{figure}
	\centering
	\includegraphics[width=0.5\linewidth]{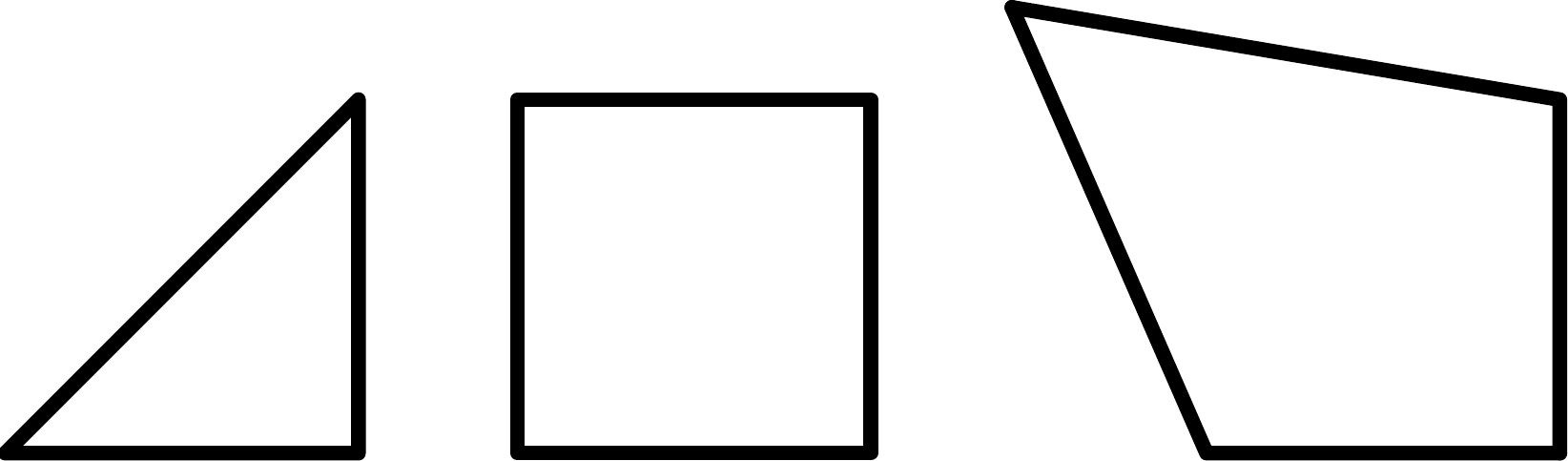}
	\caption{The polygons considered in Proposition~\ref{prop:triangles_and_quadrilaterals}}
	\label{fig:triangolo_quadrilateri}
\end{figure}

	(ii) Up to affine equivalence we can assume that $F$ is the convex hull of $(0,0)$, $(1,0)$, $(1,1)$, $(0,1)$.
	We consider the $2 \times 3$ matrix whose columns are the first $3$ edges of $F$
	\[
	\begin{matrix}
	p & q & x \\
	\hline
	1 & 0 & -1 \\
	0 & 1 & 0
	\end{matrix}
	\]
	where we have used the indeterminates $p,q,x$ to label the edges.
	The ideal $I_F \subseteq \CC[p,q,x]$ is generated by $p-x$, $p^2 - x^2$, $q$, and $q^2$.

	(iii) Up to affine equivalence we can assume that $F$ is the convex hull of $(0,0)$, $(1,0)$, $(1,1)$, $(c,d)$, where $c,d \in \RR$ are such that $c < 1$, $d>0$ and $(c,d) \neq (1,1)$.
	We consider the $2 \times 3$ matrix whose columns are the first $3$ edges of $F$
	\[
	\begin{matrix}
	p & q & x \\
	\hline
	1 & 0 & c-1 \\
	0 & 1 & d-1
	\end{matrix}
	\]
	where we have used the indeterminates $p,q,x$ to label the edges.
	
	The ideal $I_F \subseteq \CC[p,q,x]$ is generated by $p+(c-1)x$, $p^2+(c-1)x^2$, $q + (d-1)x$, and $q^2 + (d-1)x^2$.	
	Therefore $A_F = \CC[p,q,x] / I_F$ is isomorphic to $\CC[x] / J$, where $J$ is the ideal generated by $p^2+(c-1)x^2$ and $q^2 + (d-1)x^2$, where $p = (1-c) x$ and $q = (1-d) x$.
	Now
	$p^2 + (c-1) x^2 = c(c - 1) x^2$
	and
	$q^2 + (d-1) x^2 = d (d-1) x^2$.
	By the hypotheses on $c$ and $d$, we have $J = (x^2)$.
	\end{proof}

\begin{proposition} \label{prop:hilbert_function_A_F}
	Let $F$ be a polygon in $\RR^2$ with $m$ vertices and let $H \colon \NN \to \NN$ be the Hilbert function of $A_F$. We have $H(1) = m-3$ and, if $m \geq 5$, then
	\[
	H(2) =  \frac{m^2 - 5m + 2}{2}.
	\]
\end{proposition}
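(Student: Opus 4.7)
The plan is as follows. By Lemma~\ref{lem:newton_2}, $I_F$ is generated by $\alpha_k := \sum_{i=1}^{m-1} a_i x_i^k$ and $\beta_k := \sum_{i=1}^{m-1} b_i x_i^k$ for $1 \leq k \leq m-2$, so only the generators of degree $\leq 2$ contribute to $(I_F)_1$ and $(I_F)_2$. Set $R := \CC[x_1,\dots,x_{m-1}]$, $L_1 := \alpha_1$, and $L_2 := \beta_1$. Since $F$ is $2$-dimensional, the $2 \times (m-1)$ matrix with columns $(a_i, b_i)^T$ has rank $2$, so $L_1, L_2$ are $\CC$-linearly independent in $R_1$. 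Therefore $\dim(I_F)_1 = 2$ and $H(1) = (m-1) - 2 = m-3$.

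For $H(2)$, we have $(I_F)_2 = L_1 R_1 + L_2 R_1 + \CC\alpha_2 + \CC\beta_2$. Since $m-1 \geq 4$, the linearly independent forms $L_1, L_2$ are a regular sequence in the polynomial ring $R$, so by Koszul the only degree-$2$ syzygy between them is $L_1 \cdot L_2 - L_2 \cdot L_1 = 0$. Hence the multiplication map $R_1 \oplus R_1 \to R_2$, $(f,g) \mapsto L_1 f + L_2 g$, has one-dimensional kernel and image of dimension $2(m-1) - 1 = 2m-3$.

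The crux is to prove that $\overline{\alpha_2}$ and $\overline{\beta_2}$ are $\CC$-linearly independent in $R_2/(L_1 R_1 + L_2 R_1)$; equivalently, that for every $(\lambda,\mu) \in \CC^2 \setminus \{0\}$, the diagonal quadratic form $Q_{\lambda,\mu} := \sum_{i=1}^{m-1}(\lambda a_i + \mu b_i)\, x_i^2$ does not lie in $L_1 R_1 + L_2 R_1$. My approach is a rank comparison of the associated symmetric $(m-1)\times(m-1)$ matrices. Writing $a = (a_i)$, $b = (b_i)$, and $f, g$ for the coefficient vectors, any element $L_1 f + L_2 g$ is represented by $\tfrac{1}{2}(a f^T + f a^T + b g^T + g b^T)$, the sum of two rank-$\leq 2$ symmetric matrices, hence of rank $\leq 4$; meanwhile $Q_{\lambda,\mu}$ is represented by the diagonal matrix $\mathrm{diag}(\lambda a_i + \mu b_i)$, whose rank equals the number of non-vanishing entries. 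Convexity of $F$ forces the $m$ edge-directions to rotate strictly through a full $2\pi$ as one traverses $\partial F$, so at most two edges of $F$ can be parallel to any given line in $\RR^2$; hence at most two of the coefficients $\lambda a_i + \mu b_i$ can vanish, giving $\mathrm{rank}(Q_{\lambda,\mu}) \geq m-3$. For $m \geq 8$ this already exceeds $4$, settling the claim.

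The main obstacle will be the remaining cases $m \in \{5,6,7\}$, where $m-3 \leq 4$ and the rank bound alone is insufficient. Here I would argue by direct computation: exploiting the fact that $A_F$ depends only on the $\GL_2(\RR) \ltimes \RR^2$-equivalence class of $F$, I normalize two edges to $(1,0)$ and $(0,1)$, parametrize $W^\perp = V(L_1,L_2)$ by $m-3$ free coordinates, and express $\overline{\alpha_2}, \overline{\beta_2}$ as explicit quadratic forms in those coordinates. Linear independence then reduces to the non-vanishing of the rank-$2$ minors of a $2 \times \binom{m-2}{2}$ coefficient matrix, and a case analysis shows that simultaneous vanishing of all these minors would force enough edges of $F$ to be mutually parallel (or $F$ to degenerate to dimension $\leq 1$), contradicting convexity. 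Granting this, $\dim(I_F)_2 = (2m-3) + 2 = 2m-1$, whence $H(2) = \binom{m}{2} - (2m-1) = \frac{m^2-5m+2}{2}$.
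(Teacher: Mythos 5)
Your computation of $H(1)=m-3$ is fine and matches the paper. For $H(2)$, your rank-comparison argument for $m\geq 8$ is genuinely different from the paper's and is correct: the symmetric matrix of anything in $L_1R_1+L_2R_1$ has rank at most $4$ (sum of two rank-$\leq 2$ symmetric matrices), while the diagonal matrix of $Q_{\lambda,\mu}$ has at most two zero entries because a convex polygon has at most two edges parallel to any given direction (and this persists for complex $(\lambda,\mu)\neq 0$: the vanishing indices must have pairwise parallel edge vectors). So for $m\geq 8$ you get $\operatorname{rank}Q_{\lambda,\mu}\geq m-3\geq 5>4$, a contradiction. That is a slick argument not in the paper.

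However, there is a genuine gap for $m\in\{5,6,7\}$, which is exactly where the claim begins ($m\geq 5$). For those $m$ you only sketch a plan (normalize, write down the quadrics, argue that simultaneous vanishing of the $2\times 2$ minors forces too many parallel edges), but the crucial case analysis is asserted, not proved. As it stands the proposal does not establish the proposition for $m\in\{5,6,7\}$.

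For comparison, the paper handles all $m\geq 5$ uniformly: since $F$ is not a parallelogram it has three consecutive pairwise non-parallel edges; normalize the first two to $(1,0)$ and $(0,1)$ so $A_F\simeq \CC[x_1,\dots,x_{m-3}]/J$ with $J_1=0$, and the two generating quadrics $f,g$ of $J_2$ have coefficient matrix (in $x_1^2,x_1x_2$) with determinant $2a_1b_1(a_1b_2-a_2b_1-a_2+b_2)$. Convexity gives $a_1\neq 0$, $b_1\neq 0$, and the strict inequality $a_1b_2-a_2b_1-a_2+b_2>0$ (the fourth vertex and $(1,0)$ lie on opposite sides of the line through $(0,0)$ and the third vertex), so the determinant is nonzero and $\dim_\CC J_2=2$ for every $m\geq 5$. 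You would need to either carry out your case analysis for $m\in\{5,6,7\}$, or replace it with an argument of this kind, to close the gap.
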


\begin{proof}
The first assertion follows from the fact that among the generators of the ideal $I_F$ there are two linear polynomials which are linearly independent. Therefore $\dim_\CC (I_F)_1 = 2$. So $\dim_\CC (A_F)_1 = (m-1) - 2$.

Now we prove the second assertion, so assume $m \geq 5$.
Since $F$ is not a parallelogram, there exist $3$ consecutive edges which are pairwise non-parallel.
Up to $\GL_2(\RR) \ltimes \RR^2$ we can assume that these $3$ edges are the following: the first edge goes from $(0,0)$ to $(1,0)$, the second edge goes from $(1,0)$ to $(1,1)$,
the third edge goes from $(1,1)$ to $(1+a_1, 1+b_1)$.
See Figure~\ref{fig:poligono_con_linea_rossa}.
As the third edge cannot be parallel to the second edge, $a_1 \neq 0$. Moreover, by convexity at $(1,1)$, one must have $a_1 < 0$.
Moreover, it is clear that $1+b_1 > 0$.
From the non-parallelism assumption between the first edge and the third edge, $b_1 \neq 0$.

\begin{figure}
	\centering
	\def\svgwidth{40mm}
	\medskip
\begingroup%
  \makeatletter%
  \providecommand\color[2][]{%
    \errmessage{(Inkscape) Color is used for the text in Inkscape, but the package 'color.sty' is not loaded}%
    \renewcommand\color[2][]{}%
  }%
  \providecommand\transparent[1]{%
    \errmessage{(Inkscape) Transparency is used (non-zero) for the text in Inkscape, but the package 'transparent.sty' is not loaded}%
    \renewcommand\transparent[1]{}%
  }%
  \providecommand\rotatebox[2]{#2}%
  \newcommand*\fsize{\dimexpr\f@size pt\relax}%
  \newcommand*\lineheight[1]{\fontsize{\fsize}{#1\fsize}\selectfont}%
  \ifx\svgwidth\undefined%
    \setlength{\unitlength}{354.35833142bp}%
    \ifx\svgscale\undefined%
      \relax%
    \else%
      \setlength{\unitlength}{\unitlength * \real{\svgscale}}%
    \fi%
  \else%
    \setlength{\unitlength}{\svgwidth}%
  \fi%
  \global\let\svgwidth\undefined%
  \global\let\svgscale\undefined%
  \makeatother%
  \begin{picture}(1,1.24492753)%
    \lineheight{1}%
    \setlength\tabcolsep{0pt}%
    \put(0,0){\includegraphics[width=\unitlength,page=1]{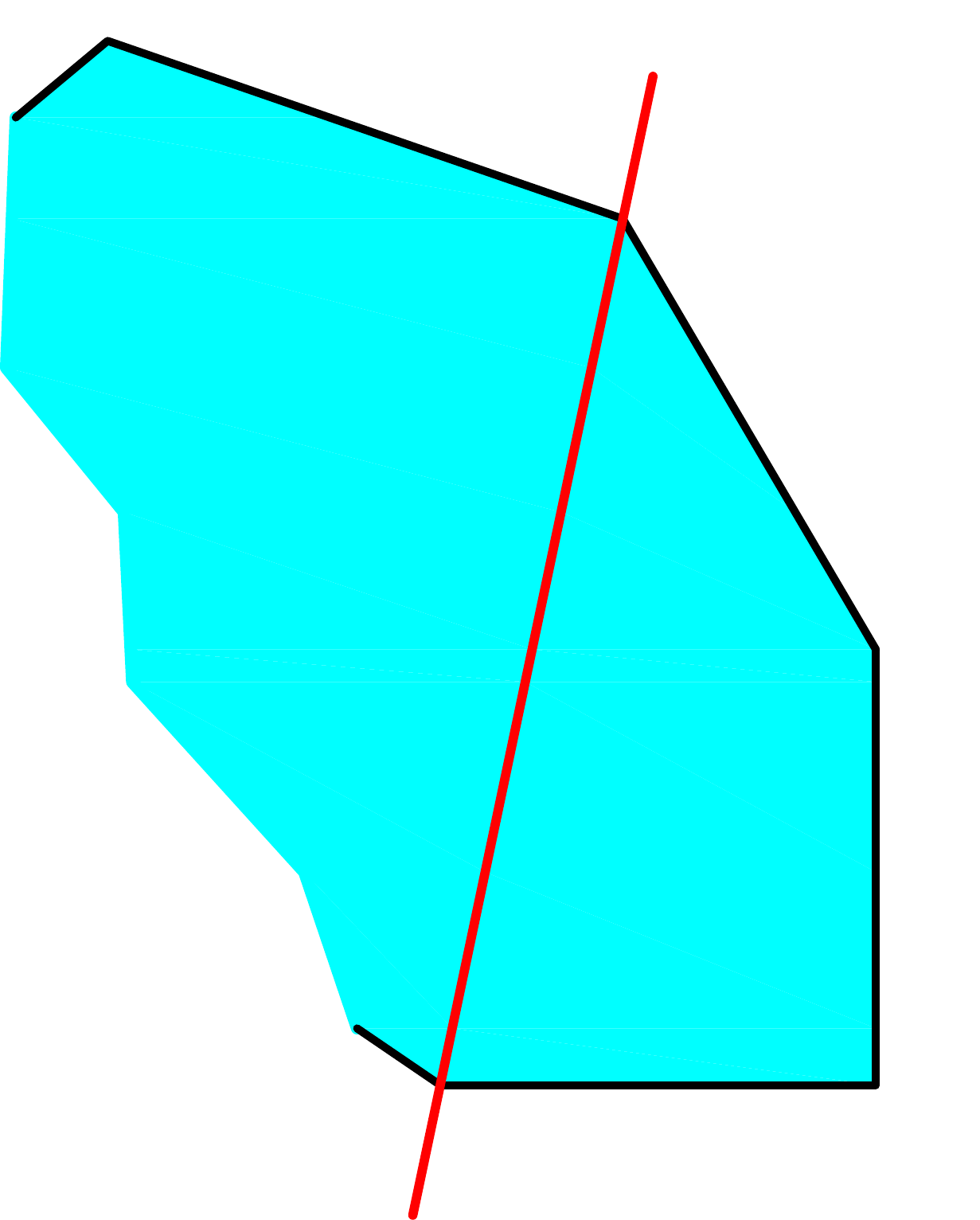}}%
    \put(0.450,0.060){\color[rgb]{0,0,0}\makebox(0,0)[lt]{\lineheight{1.25}\smash{\begin{tabular}[t]{l}$(0,0)$\end{tabular}}}}%
    \put(0.880,0.060){\color[rgb]{0,0,0}\makebox(0,0)[lt]{\lineheight{1.25}\smash{\begin{tabular}[t]{l}$(1,0)$\end{tabular}}}}%
    \put(0.905,0.58){\color[rgb]{0,0,0}\makebox(0,0)[lt]{\lineheight{1.25}\smash{\begin{tabular}[t]{l}$(1,1)$\end{tabular}}}}%
    \put(0.655,1.027){\color[rgb]{0,0,0}\makebox(0,0)[lt]{\lineheight{1.25}\smash{\begin{tabular}[t]{l}$(1+a_1,1+b_1)$\end{tabular}}}}%
    \put(0.040,1.225){\color[rgb]{0,0,0}\makebox(0,0)[lt]{\lineheight{1.25}\smash{\begin{tabular}[t]{l}$(1+a_1+a_2,1+b_1+b_2)$\end{tabular}}}}%
  \end{picture}%
\endgroup%

	\caption{}
	\label{fig:poligono_con_linea_rossa}
\end{figure}

Let $\ell$ be the line passing through $(0,0)$ and $(1+a_1, 1+b_1)$, which is depicted in red in Figure~\ref{fig:poligono_con_linea_rossa}.
Now consider the fourth edge, which goes from $(1+a_1, 1+b_1)$ to $(1+a_1+a_2, 1+b_1+b_2)$.
It is clear that the vertices $(1+a_1+a_2, 1+b_1+b_2)$ and $(1,0)$ lie on  different sides with respect to $\ell$; this easily implies the inequality
\begin{equation*}
a_1 b_2 - a_2 b_1 - a_2 + b_2 > 0.
\end{equation*}

We consider the $2 \times (m-1)$ matrix
\[
\begin{matrix}
p & q & x_1 & x_2 & \cdots & x_{m-3} \\
\hline
1 & 0 & a_1 & a_2 & \cdots & a_{m-3} \\
0 & 1 & b_1 & b_2 & \cdots & b_{m-3}
\end{matrix}
\]
and the $\CC$-algebra $A_F = \CC[p,q,x_1, \dots, x_{m-3}] / I_F$. The polynomials
\begin{gather*}
	p + a_1 x_1 + a_2 x_2 + \cdots + a_{m-3} x_{m-3} \\
	p^2 + a_1 x_1^2 + a_2 x_2^2 + \cdots + a_{m-3} x_{m-3}^2 \\
			q + b_1 x_1 + b_2 x_2 + \cdots + b_{m-3} x_{m-3} \\
		q^2 + b_1 x_1^2 + b_2 x_2^2 + \cdots + b_{m-3} x_{m-3}^2
	\end{gather*}
are contained in $I_F$.
Then $A_F$ is isomorphic to $\CC[x_1, x_2, \dots, x_{m-3}] / J$, where $J$ is a certain homogeneous ideal such that the degree $1$ part of $J$ is zero and the degree $2$ part of $J$ is spanned by the following two quadrics:
\begin{align*}
f &= (-a_1 x_1 - a_2 x_2 - \cdots - a_{m-3} x_{m-3})^2 + a_1 x_1^2 + a_2 x_2^2 + \cdots + a_{m-3} x_{m-3}^2 \\
&= a_1(a_1 + 1) x_1^2 + 2 a_1 a_2 x_1 x_2 + a_2(a_2 + 1) x_2^2 + \cdots, \\
g &= (-b_1 x_1 - b_2 x_2 - \cdots - b_{m-3} x_{m-3})^2 + b_1 x_1^2 + b_2 x_2^2 + \cdots + b_{m-3} x_{m-3}^2 \\
&= b_1(b_1 + 1) x_1^2 + 2 b_1 b_2 x_1 x_2 + b_2(b_2 + 1) x_2^2 + \cdots.
\end{align*}
Since
\begin{equation*}
\det \begin{pmatrix}
	a_1 (a_1 + 1) & 2 a_1 a_2 \\
		b_1 (b_1 + 1) & 2 b_1 b_2
\end{pmatrix}
= 2 a_1 b_1 (a_1 b_2 - a_2 b_1 - a_2 + b_2) \neq 0,
\end{equation*}
the quadrics $f$ and $g$ are linearly independent. This implies $\dim_\CC J_2 = 2$, therefore $\dim_\CC (A_F)_2 = \frac{(m-3)(m-2)}{2}-2$.
\end{proof}

\begin{lemma} \label{lem:two_quadrics_in_P^1}
	Let $f,g \in \CC[x,y]$ be two non-zero homogeneous polynomials of degree $2$
	 such that they are coprime.
	Then $(x,y)^3 \subseteq (f,g)$  and $\CC[x,y] / (f,g)$ is isomorphic to $\CC[x,y] / (x^2, y^2)$ as standard graded $\CC$-algebras.
\end{lemma}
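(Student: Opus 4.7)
The plan is to handle both claims through the pencil of quadrics $\CC f + \CC g \subseteq \CC[x,y]_2$. First, coprimality of $f$ and $g$ in the UFD $\CC[x,y]$ together with $\deg f = \deg g = 2 > 0$ implies that $(f,g)$ is a regular sequence in $\CC[x,y]$, so the Koszul complex
\[
0 \to \CC[x,y](-4) \to \CC[x,y](-2)^{\oplus 2} \to \CC[x,y] \to \CC[x,y]/(f,g) \to 0
\]
is exact. This yields the Hilbert series
\[
\mathrm{HS}_{\CC[x,y]/(f,g)}(t) = \frac{(1-t^2)^2}{(1-t)^2} = (1+t)^2 = 1 + 2t + t^2.
\]
In particular $\CC[x,y]/(f,g)$ vanishes in all degrees $\geq 3$, which is exactly the statement $(x,y)^3 \subseteq (f,g)$.

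For the isomorphism claim, I would produce two linearly independent linear forms $u,v \in \CC[x,y]_1$ whose squares both lie in the pencil $\CC f + \CC g$. Since that pencil is $2$-dimensional and equals $(f,g) \cap \CC[x,y]_2$, if $u^2$ and $v^2$ are linearly independent in the pencil they automatically span it, giving $(u^2,v^2) = (f,g)$; the linear change of variables $x \mapsto u$, $y \mapsto v$ is then a $\CC$-algebra automorphism of $\CC[x,y]$ carrying $(x^2,y^2)$ isomorphically onto $(f,g)$, producing the required graded isomorphism. To locate $u^2$ and $v^2$, I would consider the discriminant $\Delta(s,t)$ of the generic pencil element $sf + tg$ regarded as a binary quadratic form in $x,y$; this $\Delta$ is a homogeneous polynomial of degree $2$ in $(s,t)$, and its projective zeros parametrise exactly the projective squares appearing in the pencil. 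Note that $\Delta \not\equiv 0$ since the image of the Veronese $\PP(\CC[x,y]_1) \to \PP(\CC[x,y]_2)$ is a smooth conic and hence contains no projective line.

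The main obstacle is therefore to rule out the possibility that $\Delta$ has a double root. I expect to argue by contradiction: if $\Delta$ had a double root, then up to changing the basis of the pencil I may assume that the unique pencil square is $f$, and after a linear change of variables in $\CC[x,y]$ that this square is $x^2$. Writing $g = ax^2 + bxy + cy^2$ and computing $\Delta$ explicitly for the pencil $\{\, sx^2 + tg\,\}$ as the discriminant of $(s+ta)x^2 + tb\,xy + tc\,y^2$, the requirement that $\Delta$ acquire a double zero at $t = 0$ forces $c = 0$; but then $g = x(ax + by)$ is divisible by $x$, contradicting $\gcd(f,g) = 1$. Once this is settled, $\Delta$ has two distinct projective roots, the corresponding squares $u^2, v^2$ are linearly independent (the squaring map $\PP(\CC[x,y]_1) \to \PP(\CC[x,y]_2)$ is injective), and both conclusions of the lemma follow.
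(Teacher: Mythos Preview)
Your proof is correct and follows the same strategy as the paper: both locate two distinct squares in the pencil $\CC f + \CC g$ by analysing the discriminant of $sf+tg$, then pass to $(x^2,y^2)$ via the corresponding linear change of variables. The only cosmetic differences are that the paper deduces $(x,y)^3 \subseteq (f,g)$ from the isomorphism rather than from the Koszul complex, and it rules out a repeated root of $\Delta$ by first normalising to $f = x(x-\alpha y)$, $g = y(y-\beta x)$ (using $\rV(f)\cap \rV(g)=\emptyset$ in $\PP^1$) and checking directly that the discriminant of $\Delta$ equals $4(1-\alpha\beta)\neq 0$.
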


\begin{proof}
	The ideal containment follows from the graded $\CC$-algebra isomorphism between $\CC[x,y] / (f,g)$ and $\CC[x,y] / (x^2, y^2)$, because the isomorphism will be given by a linear change of coordinates. Therefore it is enough to construct the graded isomorphism between these two $\NN$-graded $\CC$-algebras.

	Since $f$ and $g$ are coprime, they are linearly independent and their zero loci $\rV(f)$ and $\rV(g)$ in $\PP^1$ are disjoint.
	With a linear change of coordinates we can assume $[0:1] \in \rV(f)$ and $[1:0] \in \rV(g)$.
	Thus, up to multiplicative constants, $f = x(x-\al y)$ and $g = y(y-\beta x)$ for $\al, \beta \in \CC$ with $\al \beta \neq 1$.
	Notice that $\rV(f) = \{ [0:1], [\al : 1] \}$ and $\rV(g) = \{[1:0], [1:\beta]\}$.
	
	For $[\lambda : \mu] \in \PP^1$ consider the form $\lambda f + \mu g = \lambda x^2 - (\lambda \al + \mu \beta)xy + \mu y^2$. Its discriminant is $\Delta = (\lambda \al + \mu \beta)^2 - 4 \lambda \mu = \al^2 \lambda^2 + 2 (\al \beta -2) \lambda \mu + \beta^2 \mu^2$.
	The discriminant of $\Delta$ is $(\al \beta - 2)^2 - \al^2 \beta^2 = 4 (1-\al \beta) \neq 0$.
	Therefore $\Delta$ has  two simple zeroes in $\PP^1$. In other words, the pencil spanned by $f$ and $g$ contains exactly $2$ non-reduced forms $h_1$ and $h_2$.
	With a linear change of coordinates we can assume that $h_1 = x^2$ and $h_2 = y^2$.
\end{proof}

\begin{example} \label{ex:pentagono_indecomponibile}

		\begin{figure}
		\centering
		\includegraphics[width=0.2\linewidth]{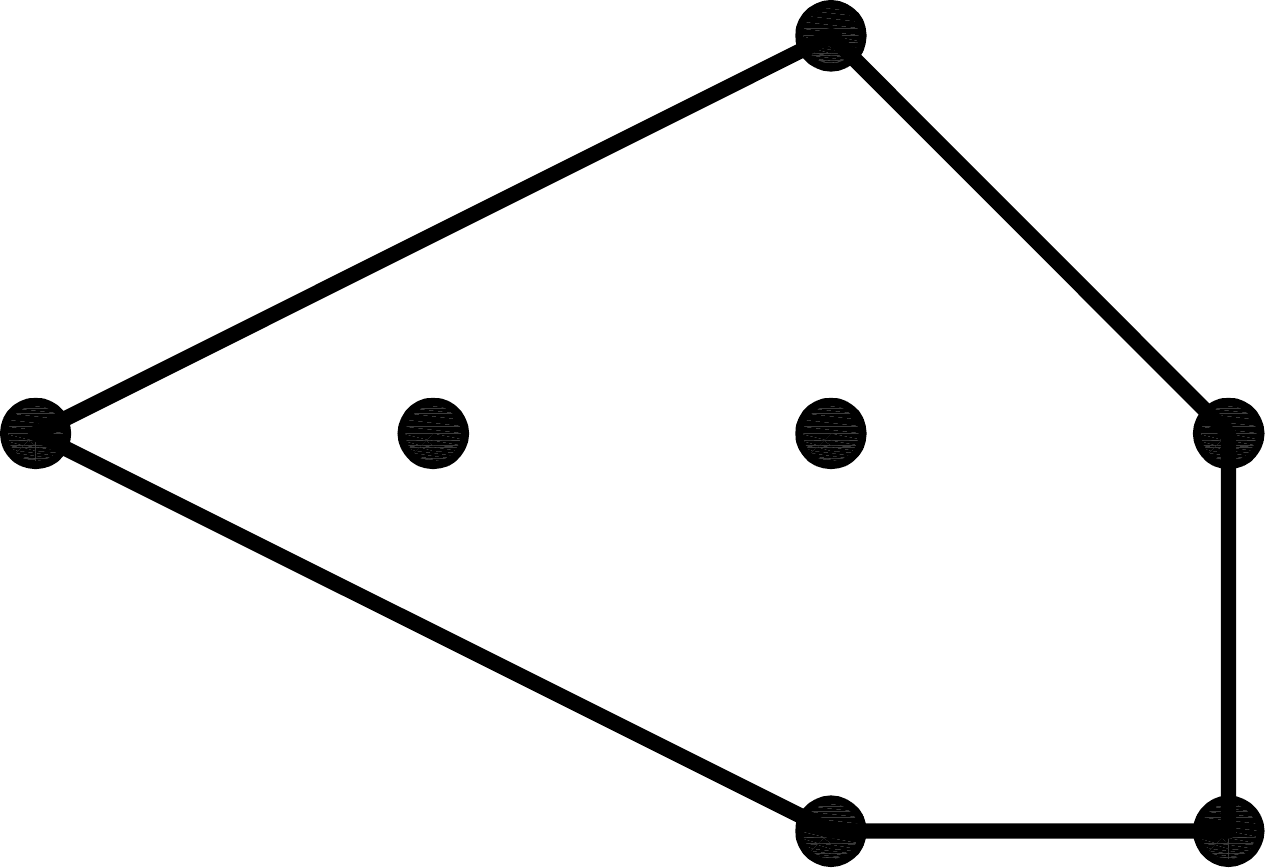}
		\caption{The pentagon in Example~\ref{ex:pentagono_indecomponibile}}
		\label{fig:pentagono_indecomponibile}
	\end{figure}
	
	Let $F$ be the lattice pentagon depicted in Figure~\ref{fig:pentagono_indecomponibile}.
	The first $4$ edges of $F$, starting from the bottom, are the columns of
	\begin{equation*}
	\begin{matrix}
	p & q & x & y \\
	\hline
	1 & 0 & -1 & -2 \\
	0 & 1 & 1 & -1
	\end{matrix}
	\end{equation*}
	and the algebra $A_F$ is isomorphic to $\CC[x,y]/J$, where $J$ is the ideal generated by $f = p^2 - x^2 - 2 y^2$, $g = q^2 + x^2 - y^2$,
	$p^3 - x^3 - 2 y^3$, $q^3 + x^3 - y^3$, where $p = x+2y$ and $q = -x+y$. One has $f = 2xy + y^2$ and $g = x^2 - xy$.
	Since $f$ and $g$ are coprime, by Lemma~\ref{lem:two_quadrics_in_P^1} we have $J = (f,g)$ and that $A_F$ is isomorphic to $\CC[x,y]/(x^2, y^2)$.
\end{example}

\begin{example} \label{ex:pentagono_strano}
	
		\begin{figure}
		\centering
		\includegraphics[width=0.33\linewidth]{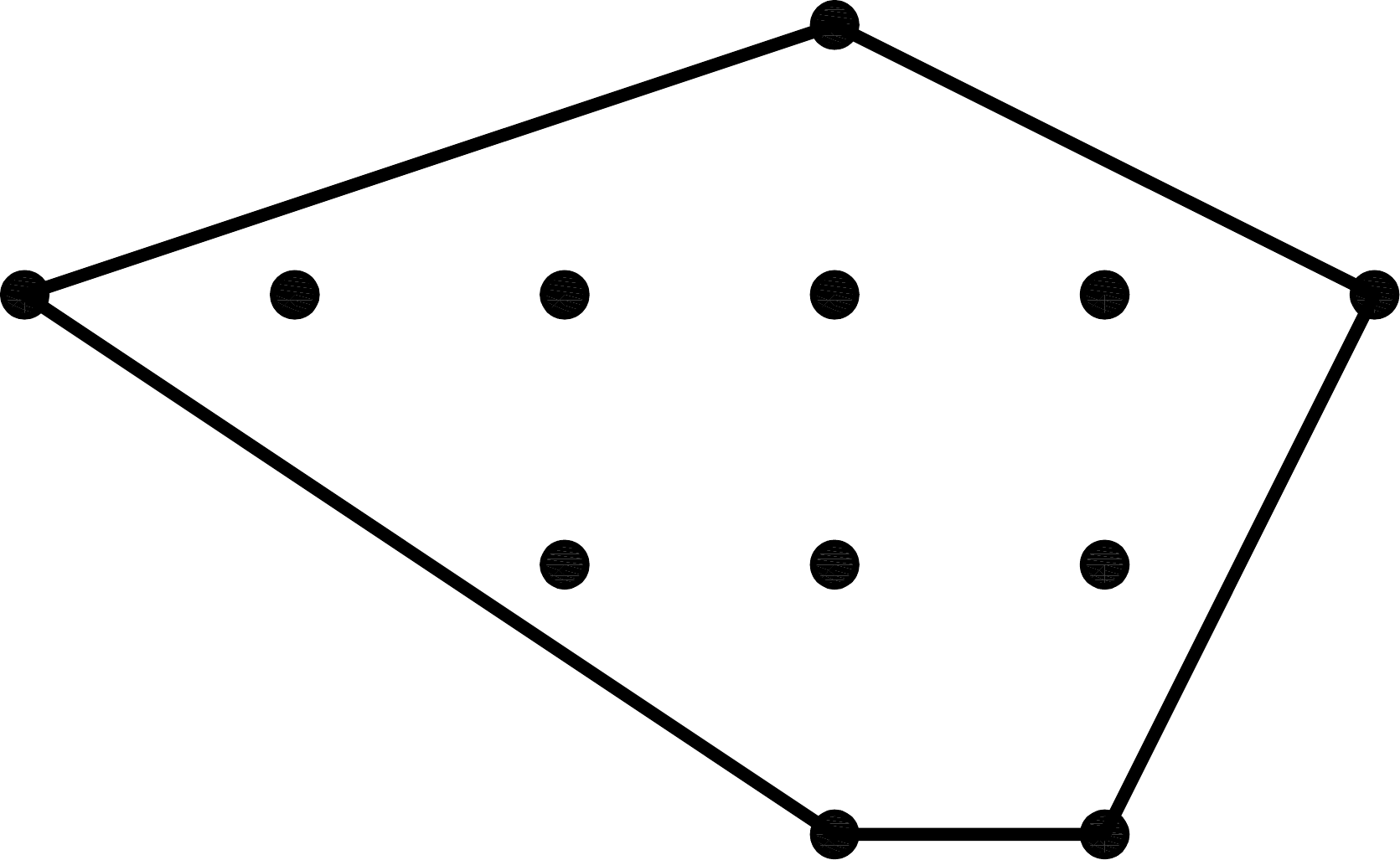}
		\caption{The pentagon in Example~\ref{ex:pentagono_strano}}
		\label{fig:pentagono_strano}
	\end{figure}
	
	Let $F$ be the lattice pentagon depicted in Figure~\ref{fig:pentagono_strano}.
	The first $4$ edges of $F$, starting from the bottom, are the columns of
	\begin{equation*}
		\begin{matrix}
			p & q & u & v \\
			\hline
			1 & 1 & -2 & -3 \\
			0 & 2 & 1 & -1
		\end{matrix}
	\end{equation*}
and the algebra $A_F$ is isomorphic to $\CC[u,v]/J$, where $J$ is the ideal generated by $f_2 = p^2 + q^2 - 2u^2 - 3v^2$, $g_2 = 2q^2 + u^2 - v^2$, $f_3 = p^3 + q^3 - 2u^3 - 3v^3$, $g_3 = 2q^3 + u^3 - v^3$, where $p = - (q -2u-3v)$ and $q = - \frac{1}{2}(u-v)$. One has
\begin{gather*}
	f_2 = \frac{9}{2} u^2 + 12uv + \frac{7}{2} v^2 = \frac{9}{2} \left( u + \frac{1}{3}v \right) \left( u + \frac{7}{3}v \right) \\
	g_2 = \frac{3}{2} u^2 - uv - \frac{1}{2} v^2 = \frac{3}{2} \left( u + \frac{1}{3}v \right) \left( u-v \right).
	\end{gather*} 
So $f_2$ and $g_2$ are not coprime.
With the linear change of coordinates given by $x = u + \frac{1}{3}v$ and $y = u$, one sees that $A_F$ is isomorphic to $\CC[x,y] / (x^2, xy, y^3)$.

\end{example}

\begin{proposition} \label{prop:pentagon}
	If $F$ is a pentagon in $\RR^2$, then
	$A_F$ is isomorphic to one of the following standard graded  $\CC$-algebras:
	\[
	\CC[x,y]/(x^2, y^2), \quad \CC[x,y]/(x^2, xy, y^3), \quad \CC[x,y]/(x^2, xy).
	\] 
\end{proposition}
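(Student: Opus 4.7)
The plan is to reduce the problem to a quotient of $\CC[x,y]$ by a homogeneous ideal $J$ whose degree-$2$ part is a pencil of quadrics, and then do a short case analysis according to whether a basis of this pencil is coprime. Applying Proposition~\ref{prop:hilbert_function_A_F} with $m=5$ gives the Hilbert function values $H(1) = 2$ and $H(2) = 1$. As in the proof of that proposition, the two linearly independent degree-$1$ elements $\alpha_1, \beta_1 \in I_F$ can be used to eliminate two of the variables, realising $A_F \cong \CC[x,y]/J$ for some homogeneous ideal $J$ with $J_1 = 0$ and $\dim_\CC J_2 = 2$. Moreover by Lemma~\ref{lem:newton_2} the ideal $I_F$ is generated in degrees $\leq m-2 = 3$, so $J$ is generated by a basis $\{f,g\}$ of the pencil $J_2$ together with the images in $\CC[x,y]$ of the two cubic generators $\alpha_3, \beta_3$.

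If $f$ and $g$ are coprime, Lemma~\ref{lem:two_quadrics_in_P^1} applies directly: it yields $(x,y)^3 \subseteq (f,g)$ and a graded $\CC$-algebra isomorphism $\CC[x,y]/(f,g) \cong \CC[x,y]/(x^2,y^2)$. In particular, every cubic form, hence the two extra cubic generators of $J$, already belongs to $(f,g)$, so $J = (f,g)$ and $A_F \cong \CC[x,y]/(x^2,y^2)$.

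If instead $f$ and $g$ share a common linear factor $\ell$, I will write $f = \ell m_1$ and $g = \ell m_2$ with $m_1, m_2$ linearly independent (forced by the linear independence of $f,g$), and after a linear change of coordinates sending $\ell$ to $x$, a short $2\times 2$ linear algebra computation shows that the degree-$2$ combinations of $f$ and $g$ span exactly $\langle x^2, xy\rangle$, so $(f,g) = (x^2,xy)$. Now the quotient $\CC[x,y]/(x^2,xy)$ has $\CC$-basis $\{1, x, y, y^2, y^3, \dots\}$, and in particular in each degree $d \geq 2$ the complement of $(x^2,xy)_d$ in $\CC[x,y]_d$ is spanned by $y^d$; hence each of the cubic images of $\alpha_3, \beta_3$ is congruent modulo $(x^2,xy)$ to a scalar multiple of $y^3$. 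Thus either both already lie in $(x^2,xy)$, giving $J = (x^2,xy)$ and $A_F \cong \CC[x,y]/(x^2,xy)$, or at least one has a non-zero $y^3$-component, in which case $y^3 \in J$ and $J = (x^2,xy,y^3)$, giving $A_F \cong \CC[x,y]/(x^2,xy,y^3)$. The one place that needs real care is the identification $(f,g) = (x^2, xy)$ in the non-coprime case; once the Hilbert function pins down $\dim J_2 = 2$, the rest reduces to the coprime/non-coprime dichotomy for the pencil together with Lemma~\ref{lem:two_quadrics_in_P^1}.
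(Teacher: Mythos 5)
Your proof is correct and follows essentially the same route as the paper's: both reduce to $\CC[x,y]/J$ with $\dim_\CC J_2 = 2$ (via Proposition~\ref{prop:hilbert_function_A_F}) and $J$ generated in degrees $\leq 3$ (via Lemma~\ref{lem:newton_2}), handle the coprime case with Lemma~\ref{lem:two_quadrics_in_P^1}, and in the non-coprime case normalise the pencil to $(x^2, xy)$ and split on whether $y^3$ lies in $J$. The only cosmetic difference is that you argue $\dim_\CC J_2 = 2$ directly from the Hilbert function value $H(2)=1$, whereas the paper cites the determinant computation inside the proof of Proposition~\ref{prop:hilbert_function_A_F}; these are equivalent.
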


\begin{proof}
We know that $I_F$ is generated by two linear forms, two degree $2$ forms, and two degree $3$ forms.
As in the proof of Proposition~\ref{prop:hilbert_function_A_F} we have that $A_F$ is isomorphic to $\CC[x,y] / J$, where $J$ is a homogeneous ideal generated by two linearly independent quadrics $f, g$ and by two degree $3$ forms.
If $f$ and $g$ are coprime, then we conclude by Lemma~\ref{lem:two_quadrics_in_P^1}.

Assume that $f$ and $g$ are not coprime.
With a linear change of coordinates we can assume that the greatest common divisor between $f$ and $g$ is $x$.
So $f = xh_1$ and $g = x h_2$, where $h_1$ and $h_2$ are some degree $1$ forms.
By replacing $f$ and $g$ with appropriate linear combinations with coefficients in $\CC$, we can assume $f = x^2$ and $g = xy$. Then
\[
\CC[x,y]_3 = \mathrm{span}_\CC \left( x^3, x^2 y, x y^2, y^3 \right) \supseteq J_3 \supseteq \mathrm{span}_\CC \left( x^3, x^2 y, x y^2 \right).
\]
Therefore there are two cases depending on whether $J \ni y^3$ or not.
\end{proof}

\begin{remark}
By Proposition~\ref{prop:triangles_and_quadrilaterals} and Proposition~\ref{prop:pentagon} it is natural to ask whether $A_F$ is always a monomial algebra for every polygon $F$. This is not the case: for instance see Example~\ref{ex:esagono_sghembo}.
\end{remark}

\begin{remark} \label{rmk:field}
	All the results in this section remain valid if the field of complex numbers is replaced by any algebraically closed field $\KK$ which contains the coordinates of the edges of the polygon $F$: the linear parts of the affine transformations used in the proofs are in $\GL_2(\KK \cap \RR)$.
Starting from the next section we will be using lattice polygons only, therefore we can work over an arbitrary algebraically closed field of characteristic $0$, which is denoted by $\CC$ for simplicity.
\end{remark}


\subsection{Deformations of isolated Gorenstein toric $3$-fold singularities after Altmann}
\label{sec:altmann}

Every Gorenstein toric affine $3$-fold without torus factors arises in the following way from a lattice polygon:
if $F$ is a lattice polygon in $\ZZ^2$,  consider the cone $\RR_{\geq 0} (F \times \{1\})$ in the lattice $\ZZ^2 \oplus \ZZ = \ZZ^3$ and the corresponding toric affine variety.
This establishes a $1$-to-$1$ correspondence between isomorphism classes of Gorenstein toric affine $3$-folds without torus factors and $\GL_2(\ZZ) \ltimes \ZZ^2$-equivalence classes of lattice polygons in $\ZZ^2$.
Furthermore, in this situation, the singular locus has dimension at most $0$ if and only if $F$ has unit edges, i.e.\ if its edges have lattice length $1$.

If $F_0, \dots, F_r$ are lattice polytopes in $\ZZ^2$, then their \emph{Minkowski sum} is
\[
F_0 + \cdots + F_r := \{ v_0 + \cdots + v_r \mid v_0 \in F_0, \dots, v_r \in F_r \}.
\]
A Minkowski decomposition of a lattice polygon $F$ is the expression of $F$ as a Minkowski sum of lattice polytopes; of course we identify two Minkowski decompositions if they are the same after reordering the summands and after translation.
In Figure~\ref{fig:esagono_sghembo} two Minkowski decompositions of the same lattice hexagon are depicted.

Altmann~\cite{altmann_minkowski} has noticed that Minkowski decompositions of $F$ induce deformations of the toric affine variety associated to $F$ (see also \cite{ilten_vollmert, hochenegger_ilten, mavlyutov, petracci_mavlyutov}).
Moreover, he computes the miniversal deformation of an isolated Gorenstein toric $3$-fold singularity:

\begin{theorem}[{Altmann~\cite{altmann_versal}}] \label{thm:klaus_versal}
Let $F$ be a lattice polygon in $\ZZ^2$ with unit edges.
Let $X$ be the toric affine $\CC$-variety associated to the cone $\RR_{\geq 0} (F \times \{1\})$ in the lattice $\ZZ^2 \oplus \ZZ$.
Let $R$ be the hull of $\Def{X}$.
Then the following statements hold true.
\begin{enumerate}
	\item $R$ is the completion of the standard $\CC$-algebra $A_F$, introduced in Construction~\ref{constr:algebra_A_F}, at its maximal homogeneous ideal.
	\item The embedding dimension of $R$ is $m-3$, where $m$ is the number of edges of $F$.
	\item There is a canonical $1$-to-$1$ correspondence between minimal primes of $R$ and maximal Minkowski decompositions of $F$. Moreover, if a minimal prime $\frakp \subset R$ corresponds to the maximal Minkowski decomposition $F = F_0 + F_1 + \cdots + F_r$, then $\dim R / \frakp = r$.
	\item $R$ is artinian if and only if $F$ is Minkowski indecomposable.
\end{enumerate}	
\end{theorem}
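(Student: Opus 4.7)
The plan is to follow Altmann's strategy from \cite{altmann_versal}, exploiting the torus action on $X$ to reduce the infinitesimal deformation problem to a graded one, where everything becomes combinatorial in $F$.

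First, I would compute the tangent space $\TT^1_X$ and show it has dimension $m-3$, which forces the embedding dimension of the hull. Since $X$ carries a $\Gm^3$-action, $\TT^1_X$ splits as a direct sum indexed by characters of the torus. For a Gorenstein toric $3$-fold whose cone is $\RR_{\geq 0}(F \times \{1\})$, the key observation is that isolatedness (equivalently, unit edges of $F$) forces $\TT^1_X$ to be concentrated in the single degree $R^\ast = -e_3^\ast$, the primitive element in the Gorenstein direction. At this height, a standard \v{C}ech-type calculation on the cone presents the homogeneous piece $\TT^1_X(-R^\ast)$ as the cokernel of a map built from the edge vectors of $F$: the $m$ edges give $m$ generators, modulo the $2$ linear relations coming from closure of $F$ and modulo $1$ further trivial direction corresponding to translation along the Gorenstein axis, yielding dimension $m-3$. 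This proves~(2) and identifies the linear part of the ideal $I_F \subset \CC[x_1,\dots,x_{m-1}]$ with the relations encoding $\sum a_i = \sum b_i = 0$.

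Next, I would construct an honest versal family, not just an infinitesimal one, using Minkowski decompositions. Given a Minkowski decomposition $F = F_0 + F_1 + \cdots + F_r$, each summand $F_i$ (appropriately normalised) produces a one-parameter flat deformation of $X$, obtained by separating the corresponding toric Cartier divisors at height one; this is the construction from \cite{altmann_minkowski}. Putting these together over all maximal decompositions gives explicit smoothing components of the base, and dimension counting (translating each of the $r+1$ summands in the plane, modulo global translation) shows that the component attached to $F = F_0 + \cdots + F_r$ has dimension $r$. The main task is then to verify that these assembled families sweep out \emph{all} of the versal base, which is where Altmann's analysis of $\TT^2_X$ enters: the obstruction map must take the higher-degree versions $\sum a_i x_i^k$, $\sum b_i x_i^k$ of the linear relations to zero, and these are exactly the higher Newton-type generators of $I_F$ described in Construction~\ref{constr:algebra_A_F}. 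This is the step I expect to be the hardest, because one needs both that these Newton relations are necessary (genuine obstructions) and that they are sufficient (the formal family over $A_F$ lifts to an actual flat family). Altmann handles sufficiency by exhibiting the deformation fibrewise using the Minkowski data; necessity comes from an explicit computation of the obstruction pairing with $\TT^2_X$ in the relevant degree.

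Finally, I would translate the graded picture to the complete local one. Granting that a $\Gm$-equivariant versal family exists over $\Spec A_F$ with $A_F$ standard graded, part~(1) follows from a general principle: the completion of such a graded versal family at the homogeneous maximal ideal is the hull of $\Def{X}$, because graded and local versality agree in the presence of a positive grading on the base. For~(3), I would use Lemma~\ref{lem:minimal_primes_completion}: minimal primes of $R$ correspond bijectively to homogeneous minimal primes of $A_F$, and these in turn correspond to the irreducible components of the versal base, which by the construction of the previous paragraph are exactly the loci swept out by maximal Minkowski decompositions; the dimension $r$ computation then transfers to $R/\frakp$ via Lemma~\ref{lem:minimal_primes_completion}(1). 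Part~(4) is immediate: $R$ is artinian iff $\Spec A_F$ has dimension $0$ iff there is no nontrivial Minkowski decomposition of $F$, since any proper decomposition produces a positive-dimensional component.
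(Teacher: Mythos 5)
This statement is attributed to Altmann and the paper does not reprove it: the only content the paper adds is the brief remark that the original formulation of part~(1) in \cite{altmann_versal} differs slightly and that the equivalence follows from the proof of \cite[Lemma~3.4(1)]{altmann_versal}. Your proposal is therefore a reconstruction of Altmann's argument rather than a comparison with a proof given in this paper; with that caveat, as a high-level roadmap of Altmann's strategy (reduction via the torus action to the single relevant degree, a combinatorial presentation of the homogeneous piece of $\TT^1_X$ in terms of edge vectors, Minkowski deformations, Newton-type relations as the obstruction ideal, and passage from graded to complete local versality) it is broadly reasonable.

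There is, however, a genuine error in the dimension count you offer for part~(3). You claim that the component attached to $F = F_0 + \cdots + F_r$ has dimension $r$ because one translates each of the $r+1$ summands in the plane and quotients by global translation; that count gives $2(r+1)-2 = 2r$, not $r$. In Altmann's construction the family associated to a decomposition into $r+1$ summands is built over $\AA^r$: after fixing one summand as a base, each of the remaining $r$ summands is lifted into its own auxiliary lattice direction (a Cayley coordinate) and contributes a single deformation parameter, so the count is $(r+1)-1=r$. In-plane translations of the Minkowski summands do not change the sum at all and do not move the deformation. A secondary gap is that you assert, rather than argue, that the higher Newton polynomials are exactly what the obstruction map produces; this is the technical core of \cite{altmann_versal}, requiring Altmann's explicit description of $\TT^2_X$ in the relevant degree and of the cup product pairing, and it does not follow merely from exhibiting the Minkowski families. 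Your sketch should flag this as the step that cannot be carried out without reproducing a substantial part of Altmann's computation.
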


The original formulation of Theorem~\ref{thm:klaus_versal}(1) in \cite{altmann_versal} is slightly different, but it is equivalent to what we have written above thanks to the proof of \cite[Lemma~3.4(1)]{altmann_versal}.

\begin{example}\label{ex:esagono_sghembo}
	Consider the lattice hexagon $F$ depicted in Figure~\ref{fig:esagono_sghembo}. The first $5$ edges of $F$, starting from the bottom and going anticlockwise, are the columns of
		\begin{figure}
		\centering
		\def\svgwidth{120mm}
		\medskip
\begingroup%
  \makeatletter%
  \providecommand\color[2][]{%
    \errmessage{(Inkscape) Color is used for the text in Inkscape, but the package 'color.sty' is not loaded}%
    \renewcommand\color[2][]{}%
  }%
  \providecommand\transparent[1]{%
    \errmessage{(Inkscape) Transparency is used (non-zero) for the text in Inkscape, but the package 'transparent.sty' is not loaded}%
    \renewcommand\transparent[1]{}%
  }%
  \providecommand\rotatebox[2]{#2}%
  \newcommand*\fsize{\dimexpr\f@size pt\relax}%
  \newcommand*\lineheight[1]{\fontsize{\fsize}{#1\fsize}\selectfont}%
  \ifx\svgwidth\undefined%
    \setlength{\unitlength}{483.17880362bp}%
    \ifx\svgscale\undefined%
      \relax%
    \else%
      \setlength{\unitlength}{\unitlength * \real{\svgscale}}%
    \fi%
  \else%
    \setlength{\unitlength}{\svgwidth}%
  \fi%
  \global\let\svgwidth\undefined%
  \global\let\svgscale\undefined%
  \makeatother%
  \begin{picture}(1,0.21553972)%
    \lineheight{1}%
    \setlength\tabcolsep{0pt}%
    \put(0,0){\includegraphics[width=\unitlength,page=1]{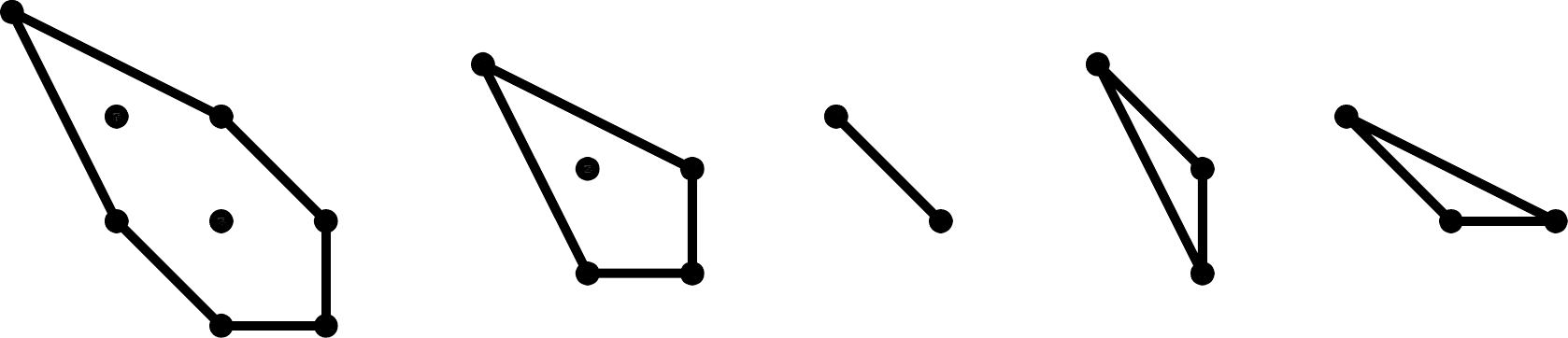}}%
    \put(0.2497,0.1032){\color[rgb]{0,0,0}\makebox(0,0)[lt]{\lineheight{1.25}\smash{\begin{tabular}[t]{l}$=$\end{tabular}}}}%
    \put(0.48,0.1032){\color[rgb]{0,0,0}\makebox(0,0)[lt]{\lineheight{1.25}\smash{\begin{tabular}[t]{l}$+$\end{tabular}}}}%
    \put(0.645,0.1032){\color[rgb]{0,0,0}\makebox(0,0)[lt]{\lineheight{1.25}\smash{\begin{tabular}[t]{l}$=$\end{tabular}}}}%
    \put(0.81,0.1032){\color[rgb]{0,0,0}\makebox(0,0)[lt]{\lineheight{1.25}\smash{\begin{tabular}[t]{l}$+$\end{tabular}}}}%
  \end{picture}%
\endgroup%

		\caption{The hexagon in Example~\ref{ex:esagono_sghembo} and its two maximal Minkowski decompositions}
		\label{fig:esagono_sghembo}
	\end{figure}
	\[
	\begin{matrix}
		p & q & x & y & z \\
		\hline
		1 & 0 & -1 & -2 & 1 \\
		0 & 1 & 1 & 1 & -2
	\end{matrix}
	\]
	and the algebra $A_F$ is isomorphic to $\CC[x,y,z] / J$, where $J$ is the ideal generated by
	\begin{align*}
		f_2 &= (x+ 2y - z)^2 - x^2 - 2y^2 + z^2 \\
		f_3 &= (x+ 2y - z)^3 - x^3 - 2y^3 + z^3 \\
		f_4 &= (x+ 2y - z)^4 - x^4 - 2y^4 + z^4 \\
		g_2 &= (-x-y+2z)^2 + x^2 + y^2 - 2 z^2 \\
		g_3 &= (-x-y+2z)^3 + x^3 + y^3 - 2 z^3 \\
		g_4 &= (-x-y+2z)^4 + x^4 + y^4 - 2 z^4.
		\end{align*}
Via the linear change of coordinates
\[
\left\{  
\begin{array}{l}
	x = 4u + 4v \\
	y = u + w \\
	z = 3u + 4v + w
\end{array}
\right.
\]
and via a Gr\"obner basis calculation we see that $A_F$ is isomorphic to $\CC[u,v,w]/K$, where $K$ is the ideal generated by  $uv$, $uw+vw$, $u^3$, $v^2w$. Let $R$ denote the completion of $A_F$ with respect to the homogeneous maximal ideal, hence $R$ is isomorphic to
\[
\CC \pow{u,v,w} / (uv, uw+vw, u^3, v^2w).
\]
The primary components of the ideal $K$ are:
\begin{itemize}
	\item $(u+v,v^2)$ with radical $(u,v)$,
	\item $(u,w)$ which is prime,
	\item $(u^3,v,w)$ with radical $(u,v,w)$.
\end{itemize}
The first primary component of $K$ gives the irreducible component of $\Spec A_F$ (or of $\Spec R$ by Lemma~\ref{lem:minimal_primes_completion}(4)) which, under the $1$-to-$1$ correspondence in Theorem~\ref{thm:klaus_versal}(3), corresponds to the Minkowski decomposition of $F$ into the sum of a quadrilateral and a segment (see the middle of Figure~\ref{fig:esagono_sghembo}).
The second primary component of $K$ gives the irreducible component of $\Spec A_F$ which corresponds to the Minkowski decomposition of $F$ into the sum of two triangles (see the right of Figure~\ref{fig:esagono_sghembo}).
The third primary component of $K$ gives an embedded prime of $A_F$.

We want to show that $R$ is not isomorphic to the quotient of a power series ring modulo a monomial ideal.

The degree $2$ part of the ideal $K$, i.e.\ the $\CC$-vector space spanned by $uv$ and $uw+vw$, gives a pencil $\Lambda$ of conics in $\PP^2$ such that the $4$ lines $(u=0)$, $(v=0)$, $(w=0)$, $(u+v =0)$ in $\PP^2$ are the irreducible components of the $2$ reducible members of $\Lambda$. Since $\PP^2$ has only $3$ coordinate lines, it is not possible to have a monomial basis of $\Lambda$ after any linear coordinate change. This implies that the standard graded algebra $A_F$ is not isomorphic to a standard graded monomial algebra.

Let $\frakm$ denote the maximal ideal of $R$. If $R$ were isomorphic to the quotient of a power series ring modulo a monomial ideal, then by Lemma~\ref{lem:minimal_primes_completion}(1) $\mathrm{gr}_\frakm R \simeq A_F$ would be isomorphic to a standard graded monomial algebra; but this was excluded above. Therefore $R$ is not isomorphic to the quotient of a power series ring modulo a monomial ideal.

\end{example}

\subsection{Necessary conditions and characterisation of the deformation spaces of isolated Gorenstein toric $3$-fold singularities}

Let $\compC$ denote the category of noetherian complete local $\CC$-algebras with residue field $\CC$.

\begin{proposition} \label{prop:consequences_thm_altmann}
	Let $R \in \compC$ with maximal ideal $\frakm$ and with embedding dimension $d$.
	If $R$ is the hull of the deformation functor of an isolated Gorenstein toric $3$-fold singularity over $\CC$, then the following statements hold true.
	\begin{enumerate}
		\item If $d \geq 2$, then $\dim_\CC (\frakm^2 / \frakm^3) = (d^2 +d - 4)/2$.
		\item If $R$ is formally smooth over $\CC$, then $R$ is isomorphic either to $\CC$ or to $\CC \pow{x}$.
	\end{enumerate}
\end{proposition}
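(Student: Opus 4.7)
The plan is to reduce both statements to Altmann's Theorem~\ref{thm:klaus_versal} together with the Hilbert function computation of Proposition~\ref{prop:hilbert_function_A_F}. By Theorem~\ref{thm:klaus_versal}(1), there exists a lattice polygon $F$ with unit edges such that $R$ is the completion of the standard graded $\CC$-algebra $A_F$ at its homogeneous maximal ideal, and by part (2) of the same theorem the embedding dimension satisfies $d = m - 3$, where $m$ is the number of edges of $F$.

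For part (1), I would first invoke Lemma~\ref{lem:minimal_primes_completion}(1), which identifies the associated graded ring $\mathrm{gr}_\frakm R$ with $A_F$ as graded $\CC$-algebras; in particular $\dim_\CC(\frakm^n/\frakm^{n+1})$ equals the value $H(n)$ of the Hilbert function of $A_F$ for every $n \geq 0$. The assumption $d \geq 2$ translates into $m \geq 5$, so Proposition~\ref{prop:hilbert_function_A_F} applies and gives
\[
\dim_\CC(\frakm^2/\frakm^3) = H(2) = \frac{m^2 - 5m + 2}{2}.
\]
Substituting $m = d + 3$ and expanding yields $(d^2 + d - 4)/2$, as claimed. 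This is essentially a direct calculation once the ingredients are in place.

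For part (2), if $R$ is formally smooth over $\CC$, then $R \simeq \CC\pow{x_1, \dots, x_d}$, and therefore $\dim_\CC(\frakm^2/\frakm^3) = \binom{d+1}{2} = d(d+1)/2$. If $d = 0$, then $R \simeq \CC$, and if $d = 1$, then $R \simeq \CC\pow{x}$, so both allowed cases appear. If instead $d \geq 2$, then part (1) forces the equality $d(d+1)/2 = (d^2 + d - 4)/2$, which is absurd, and this contradiction rules out $d \geq 2$. Hence $d \in \{0, 1\}$, proving the claim.

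The only potentially delicate point is the application of Altmann's theorem: one must remember that it is phrased in terms of polygons with unit edges (to guarantee the singularity is isolated) and that the embedding dimension is $m - 3$, not $m$. Once this dictionary is fixed, both statements follow immediately. There is no real obstacle here; the proposition is a clean numerical consequence of \textbf{\ref{thm:klaus_versal}} combined with the Hilbert function formula of \textbf{\ref{prop:hilbert_function_A_F}}.
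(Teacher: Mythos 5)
Your proof is correct and follows exactly the same route as the paper: combine Theorem~\ref{thm:klaus_versal} with Proposition~\ref{prop:hilbert_function_A_F} (via Lemma~\ref{lem:minimal_primes_completion}(1) to identify $\dim_\CC(\frakm^n/\frakm^{n+1})$ with the Hilbert function of $A_F$), and then deduce (2) from (1) by comparing with $\dim_\CC(\frakm^2/\frakm^3) = d(d+1)/2$ for a formally smooth $R$. The paper's proof is the same argument, stated more tersely.
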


\begin{proof}
(1) follows from Theorem~\ref{thm:klaus_versal} and Proposition~\ref{prop:hilbert_function_A_F}.
If $R$ is formally smooth over $\CC$, then $\dim_\CC \frakm^2 / \frakm^3 = (d^2 + d)/2$; thus by (1) we deduce $d \leq 1$.
\end{proof}

   \begin{theorem} \label{thm:explicit_versal_Gorenstein_3fold_toric}
	Let $R \in \compC$ with embedding dimension $\leq 2$.
	Then  $R$ is the hull of the deformation functor of an isolated Gorenstein toric $3$-fold singularity if and only if $R$ is isomorphic to (exactly) one of the following $\CC$-algebras:
\begin{enumerate}
	\item[0)] $\CC $,
	\item[1.a)] $\CC [x] / (x^2)$,
	\item[1.b)] $\CC \pow{x}$,
	\item[2.a)] $\CC [x,y]/(x^2, y^2)$,
	\item[2.b)] $\CC [x,y] / (x^2, xy, y^3)$,
	\item[2.c)] $\CC \pow{x,y} / (x^2, xy)$.
\end{enumerate}
\end{theorem}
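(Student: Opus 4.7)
The plan is to reduce everything to Altmann's Theorem~\ref{thm:klaus_versal} and the algebraic classification carried out in Section~\ref{sec:toric_singularities}. For the ``only if'' direction, suppose $R$ is the hull of $\Def{X}$ for an isolated Gorenstein toric $3$-fold singularity $X$. By the correspondence recalled at the beginning of \S\ref{sec:altmann}, $X$ arises from $\RR_{\geq 0}(F \times \{1\})$ for a unique (up to $\GL_2(\ZZ) \ltimes \ZZ^2$) lattice polygon $F$ with unit edges, and isolatedness of the singularity forces the edges to be unit. By Theorem~\ref{thm:klaus_versal}(1)--(2), $R$ is the completion of $A_F$ at its homogeneous maximal ideal, and its embedding dimension is $m-3$ where $m$ is the number of edges of $F$. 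The hypothesis $d := \embdim R \leq 2$ therefore forces $m \in \{3,4,5\}$.

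I would then dispatch the three cases. If $m=3$, Proposition~\ref{prop:triangles_and_quadrilaterals}(i) gives $A_F \simeq \CC$, so $R \simeq \CC$ (case~0). If $m=4$, Proposition~\ref{prop:triangles_and_quadrilaterals}(ii)--(iii) shows that $A_F$ is either $\CC[x]$ (when $F$ is a parallelogram) or $\CC[x]/(x^2)$ (otherwise); completing yields $\CC\pow{x}$ (case~1.b) or $\CC[x]/(x^2)$ (case~1.a). If $m=5$, Proposition~\ref{prop:pentagon} singles out three possible isomorphism classes of $A_F$. Observing that $\CC[x,y]/(x^2,y^2)$ and $\CC[x,y]/(x^2, xy, y^3)$ are already artinian (their maximal ideals are nilpotent) and therefore equal to their completions, whereas $\CC[x,y]/(x^2,xy)$ has Krull dimension $1$ with completion $\CC\pow{x,y}/(x^2, xy)$, one obtains exactly cases 2.a), 2.b), 2.c).

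For the ``if'' direction, I must exhibit a lattice polygon with unit edges realising each of the six rings. Case~0 is realised by the standard triangle $\mathrm{conv}\{(0,0),(1,0),(0,1)\}$; case~1.b by the unit square (a parallelogram with unit edges); case~1.a by the quadrilateral in Example~\ref{ex:klaus}. For the three pentagonal cases, Example~\ref{ex:pentagono_indecomponibile} realises case~2.a, Example~\ref{ex:pentagono_strano} realises case~2.b, and the pentagon in Example~\ref{ex:klaus} realises case~2.c; in each instance one checks directly that the polygon has unit edges. To conclude, one verifies that the six rings are pairwise non-isomorphic, which is immediate from their embedding dimensions and Hilbert functions ($\dim_\CC \frakm/\frakm^2$, $\dim_\CC \frakm^2/\frakm^3$, Krull dimension).

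The main obstacle is the pentagonal case $m=5$: although Proposition~\ref{prop:pentagon} gives the three possible isomorphism classes of $A_F$ abstractly, one must produce explicit lattice pentagons with unit edges whose associated algebras realise each of the three types. This is precisely the content of Examples~\ref{ex:klaus}, \ref{ex:pentagono_indecomponibile} and~\ref{ex:pentagono_strano}, and it is the computational core of the theorem; the remaining assembly is essentially bookkeeping via Theorem~\ref{thm:klaus_versal} and Lemma~\ref{lem:minimal_primes_completion}.
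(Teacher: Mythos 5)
Your proposal is correct and takes essentially the same approach as the paper: the paper's proof is a one-line assembly of Theorem~\ref{thm:klaus_versal}, Proposition~\ref{prop:triangles_and_quadrilaterals}, Proposition~\ref{prop:pentagon}, and Examples~\ref{ex:klaus}, \ref{ex:pentagono_indecomponibile}, \ref{ex:pentagono_strano}, and your argument simply spells out that assembly (including the correct observation that cases 2.a) and 2.b) are artinian and hence already complete, while 2.c) has completion $\CC\pow{x,y}/(x^2,xy)$).
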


\begin{proof}
It follows from Theorem~\ref{thm:klaus_versal}, Proposition~\ref{prop:triangles_and_quadrilaterals}, Proposition~\ref{prop:pentagon},
Example~\ref{ex:klaus}, Example~\ref{ex:pentagono_indecomponibile}, Example~\ref{ex:pentagono_strano}.
\end{proof}

\subsection{Deformations of isolated $\QQ$-Gorenstein toric singularities} \label{sec:subsection_toric_singularities_final}

Cyclic quotient surface singularities are exactly toric surface singularities. Their deformations have been studied quite extensively \cite{riemenschneider_cyclic, christophersen_cyclic, stevens_cyclic, behnke_riemenschneide_cyclic, stevens_cyclic_new}.
An easy consequence is the following:

\begin{lemma} \label{lem:surface}
Let $R \in \compC$ with embedding dimension $\leq 3$.
Then $R$ is the hull of the deformation functor of an isolated toric surface singularity if and only if $R$ is isomorphic to one of the following $\CC$-algebras: $\CC \pow{x}$, $\CC \pow{x,y}$, $\CC \pow{x,y,z}$.
\end{lemma}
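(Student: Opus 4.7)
The plan is to reduce to the known classification of versal deformations of cyclic quotient surface singularities developed in the cited works of Riemenschneider, Christophersen, Stevens, and Behnke--Riemenschneider.

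Every isolated toric surface singularity over $\CC$ is isomorphic to a cyclic quotient $X = \frac{1}{n}(1,q)$ for coprime integers $n \geq 2$ and $1 \leq q \leq n-1$. For the ``if'' direction, I would verify each of the three rings directly. Taking $X = A_k := \frac{1}{k+1}(1,k) \cong V(xy - z^{k+1}) \subset \AA^3$, which is a hypersurface and hence has unobstructed deformations, the miniversal family is $V(xy - z^{k+1} - \sum_{i=0}^{k-1} t_i z^i) \to \Spec \CC[t_0, \dots, t_{k-1}]$; so the hull of $\Def{A_k}$ is the power series ring $\CC\pow{t_0, \dots, t_{k-1}}$, and specialising to $k = 1, 2, 3$ produces exactly the three rings listed.

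For the ``only if'' direction, I would invoke the following consequence of the cited structural results: if $X = \frac{1}{n}(1,q)$ is a cyclic quotient surface singularity not of type $A$, then the hull of $\Def X$ has embedding dimension at least $4$, i.e., $\dim_\CC T^1_X \geq 4$. Granting this, any hull $R$ of an isolated toric surface singularity with embedding dimension $\leq 3$ must come from an $A_k$-singularity, and the embedding dimension of the hull (namely $k$) then forces $k \leq 3$, so $R$ is isomorphic to one of the three listed power series rings. The possibility $R = \CC$ is excluded because no non-smooth cyclic quotient surface singularity is rigid: every such $X$ admits at least the trivial P-resolution, which yields a positive-dimensional smoothing component.

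The main obstacle is the numerical lower bound $\dim T^1_X \geq 4$ for non-$A$ cyclic quotient surface singularities. This is a purely combinatorial statement about the Hirzebruch--Jung continued fraction $n/q = [b_1, \dots, b_r]$ (where some $b_i \geq 3$ whenever $X$ is not of type $A$), which I would verify case-by-case from the explicit description of the versal deformation in the cited works. Two cases arise naturally: (a) $r = 1$ with $b_1 \geq 3$, so $X$ is the cone on a rational normal curve of degree $\geq 3$, where Pinkham's analysis shows the versal base has two smooth components whose combined tangent dimension is at least $4$; and (b) $r \geq 2$ with at least one $b_i \geq 3$, where the Christophersen--Stevens description of components via P-resolutions again forces $\dim T^1_X \geq 4$. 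Together with the explicit $A_k$-computation, this classification yields the lemma.
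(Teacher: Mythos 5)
Your ``only if'' direction rests on the claim that every non-$A$ cyclic quotient surface singularity $X=\frac{1}{n}(1,q)$ has $\dim_\CC T^1_X\geq 4$, and you then conclude that any hull of embedding dimension $\leq 3$ must come from an $A_k$-singularity. This claim is false, and so is your supporting case (a). The cone over the rational normal curve of degree $3$, i.e.\ $\frac{1}{3}(1,1)$, is not of type $A$ and has $\dim T^1=2$; similarly $\frac{1}{5}(1,3)$ (Hirzebruch--Jung data $5/2=[3,2]$) is not of type $A$ and has $\dim T^1=3$. Pinkham's two-component phenomenon for cones over rational normal curves starts only at degree $\geq 4$; for degree $3$ the versal base is smooth of dimension $2$, not reducible with tangent dimension $\geq 4$.

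The paper's proof does not, and cannot, reduce to the $A$-series. Writing $n/(n-q)=[a_2,\dots,a_{e-1}]$ and using Riemenschneider's formula $\dim T^1_X=\sum_{i=2}^{e-1}a_i-2$, one gets the inequality $\dim T^1_X\geq 2e-6$ for non-$A$ singularities ($e\geq 4$); the constraint $\dim T^1_X\leq 3$ forces $e=4$, so $[a_2,a_3]$ with $a_2+a_3\leq 5$, giving exactly $\frac{1}{3}(1,1)$ and $\frac{1}{5}(1,3)$ (the latter via both orderings of $[3,2]$ and $[2,3]$). One must then separately verify that these two non-$A$ singularities are unobstructed, so that their hulls are $\CC\pow{x,y}$ and $\CC\pow{x,y,z}$ respectively. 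Your proposal skips this verification entirely because it wrongly excludes these cases; by coincidence the final list of rings in the lemma is unchanged, but the argument as written contains a genuine gap that would become an error if (hypothetically) one of these two singularities had been obstructed.
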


\begin{proof}
One implication is easy: for each integer $m \geq 1$, the formally smooth algebra $\CC \pow{x_1, \dots, x_m}$ is the hull of the deformation functor of the surface $A_m$-singularity, i.e.\ $\frac{1}{m+1}(1,m)$.

For the other implication, assume that $R$ is the hull of the singularity $X = \frac{1}{n}(1,q)$, where $n$ and $q$ are coprime integers such that $1 \leq q \leq n-1$.
Let $e$ denote the embedding dimension of $X$.
The case $q=n-1$ (i.e.\ $e=3$) gives the $A_{n-1}$-singularity and has been considered above. We can assume $q < n-1$ (i.e.\ $e \geq 4$). One considers
the continuous fraction expansion
\[
\frac{n}{n-q} = [a_2, a_3, \dots, a_{e-1}] = a_2 - \cfrac{1}{a_3 - \cfrac{1}{a_4 - \ddots}}
\]
where $a_2, \dots, a_{e-1}$ are integers $\geq 2$.
 By \cite[Satz~11]{riemenschneider_cyclic} we have
\[
3 \geq \embdim R = \dim \TT^1_X = \sum_{i=2}^{e-1} a_i - 2 = \sum_{i=2}^{e-1} (a_i - 2) + 2e-6 \geq 2e-6.
\]
This implies that we need to have $e = 4$, hence
\[
\frac{n}{n-q} = a_2 - \frac{1}{a_3}
\]
and $3 \geq \embdim R = a_2 + a_3 -2$, i.e.\ $a_2 + a_3 \leq 5$. There are $3$ possibilities.
\begin{itemize}
	\item The continuous fraction expansion $[2,2] =\frac{3}{2}$ is associated to the singularity $X = \frac{1}{3}(1,1)$; this implies that $R$ is isomorphic to $\CC \pow{x,y}$.
\item The continuous fraction expansion $[3,2] = \frac{5}{2}$ is associated to the singularity $X = \frac{1}{5}(1,3)$; this implies that $R$ is isomorphic to $\CC \pow{x,y,z}$.
\item The continuous fraction expansion $[2,3] = \frac{5}{3}$ is associated to the singularity $X = \frac{1}{5}(1,2)$ which is isomorphic to $\frac{1}{5}(1,3)$.
\end{itemize}
This concludes the proof of Lemma~\ref{lem:surface}.
\end{proof}

\begin{remark} \label{rem:klaus_rigidity}
We now mention two consequences of \cite[(6.5)]{altmann_minkowski}:
\begin{enumerate}
\item every isolated $\QQ$-Gorenstein toric singularity of dimension $\geq 4$ is rigid;

\item every isolated $\QQ$-Gorenstein non-Gorenstein toric $3$-fold singularity is rigid.
\end{enumerate}
\end{remark}

\begin{proof}[Proof of Theorem~\ref{thm:main_toric_singularity}]
Combine Theorem~\ref{thm:explicit_versal_Gorenstein_3fold_toric}, Lemma~\ref{lem:surface}, and Remark~\ref{rem:klaus_rigidity}.
\end{proof}

\begin{proof}[Proof of Theorem~\ref{thm:main_no_murphys_law_for_toric_singularities}]
Fix $n \geq 1$ and consider $R = \CC \pow{x_1, \dots, x_n} / (x_1^3)$.
We need to prove that there is no isolated $\QQ$-Gorenstein toric singularity $X$ such that $R$ is the hull of the deformation functor of $X$.
By contradiction let us assume that there is such an $X$.

If $X$ has dimension $2$, then each irreducible component of $\Spf R$ is a smoothing component of $X$; therefore, by openness of versality, we have that every irreducible component of $\Spf R$ is generically smooth, in particular generically reduced. This contradicts the fact that $\Spf \CC \pow{x_1, \dots, x_n} / (x_1^3)$ has a unique irreducible component, which has multiplicity $3$. Therefore $X$ has dimension $\geq 3$.

By Remark~\ref{rem:klaus_rigidity}, $X$ has dimension $3$ and is Gorenstein. Also this situation is impossible by Proposition~\ref{prop:consequences_thm_altmann}(1).
\end{proof}

\section{Deformations of toric Fano varieties}
\label{sec:toric_Fano}
\subsection{From toric singularities to toric Fano varieties} \label{sec:subsection_Fano}

From a lattice polygon we construct a $3$-dimensional lattice polytope:

\begin{construction} \label{constr:P_F}
	If $F$ be a lattice polygon in $\ZZ^2$, then $P_F$ denotes the $3$-dimensional lattice polytope which is the convex hull of $(F \times \{1\}) \cup ((-F) \times \{-1\})$ in $\ZZ^3$.
\end{construction}

The polytope $P_F$ has two facets that are isomorphic to $F$: the top one and the bottom one.

It is clear that the origin lies in the interior of $P_F$ and that every vertex of $P_F$ is a primitive lattice vector of $\ZZ^3$. In other words, $P_F$ is a \emph{Fano polytope}.
Therefore, one can consider the face fan of $P_F$ (which is the fan given by the cones over the faces of $P_F$) and the corresponding toric variety, which is a Fano $3$-fold.

Since $P_F$ is centrally symmetric, the Fano $3$-fold associated to the face fan of $P_F$ is K-polystable by \cite{berman_polystability}.

\begin{proof}[Proof of Theorem~\ref{thm:main_hull_K-moduli_stack}]
Let $Y$ be an isolated Gorenstein toric $3$-fold singularity and let $R$ be the hull of its deformation functor.
The toric affine variety $Y$ is associated to a lattice polygon $F$ in $\ZZ^2$ with unit edges.
Consider the $3$-dimensional lattice polytope $P_F$ as in Construction~\ref{constr:P_F} and the K-polystable toric Fano $3$-fold $X$ associated to the face fan of $P_F$.

Let $Y_+$ (resp.\ $Y_-$) denote the toric affine variety associated to the cone over the top (resp.\ bottom) face of $P_F$.
In this way $Y_+$ and $Y_-$ are isomorphic to $Y$ and  are open neighbourhoods of two isolated points of the singular locus of $X$.

Since $\rH^1(X, \cT^0_X) = 0$ and $\rH^2(X, \cT^0_X) = 0$ by \cite[Proof of Theorem~5.1]{totaro} (see also \cite{petracci_survey}), by Proposition~\ref{prop:section_deformation_qG} we have that the product of restriction maps
\begin{equation} \label{eq:restriction_map_toric}
\DefqG{X} \longrightarrow \DefqG{Y_+} \times \DefqG{Y_-}
\end{equation}
is surjective and admits a section.
As $Y$ is Gorenstein, $ \DefqG{Y_\pm} = \Def{Y_\pm}$.
This proves Theorem~\ref{thm:main_hull_K-moduli_stack} in dimension $n=3$. In higher dimension it is enough to consider the product of $X$ with a projective space, by \cite[Proposition~4.1]{ask_petracci}.
\end{proof}

\begin{remark}
One can prove that away from the singular points of $Y_+$ and of $Y_-$ the stack $\frakX$ is locally unobstructed, i.e.\ $X$ is locally qG-unobstructed.
It is tempting to speculate that the map~\eqref{eq:restriction_map_toric} is smooth.
Unfortunately we were not able to prove this (see Remark~\ref{rmk:restriction_map_smooth?}).
\end{remark}

\begin{example}
Two examples of Construction~\ref{constr:P_F} and of Theorem~\ref{thm:main_hull_K-moduli_stack} are contained in \cite{ask_petracci} and one example is contained in \cite{petracci_agplus}.
In all these 3 examples the map~\eqref{eq:restriction_map_toric} is smooth.
\end{example}

\subsection{Arbitrarily many branches} \label{sec:arbitrarily_many_branches}

Here we prove that the K-moduli stack and the K-moduli space can have arbitrarily many local branches.

\begin{lemma} \label{lem:infinite_iterates}
	Let $e_1$ and $e_2$ be the vectors of the standard basis of the lattice $\ZZ^2$. 
	Consider the matrix
\begin{equation*}
	L = \begin{pmatrix}
		5 & 2 \\ 2 & 1
	\end{pmatrix} \in \mathrm{SL}_2(\ZZ).
\end{equation*}
	Then,  whenever $m$ and $n$ are distinct non-negative integers, the two subsets
	\begin{gather*}
		\left\{ L^m e_1, L^m(e_1+e_2), L^m e_2, - L^m e_1, - L^m(e_1+e_2), - L^m e_2   \right\} \\
		\left\{ L^n e_1, L^n(e_1+e_2), L^n e_2, - L^n e_1, - L^n(e_1+e_2), - L^n e_2   \right\}
		\end{gather*}
				are disjoint.
\end{lemma}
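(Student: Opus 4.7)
The plan is to reduce the lemma to the following single claim: for every $v$ in the six-element set $S := \{\pm e_1,\ \pm(e_1+e_2),\ \pm e_2\}$ and every integer $k \geq 1$, one has $L^k v \notin S$. Indeed, if the two sets in the statement shared a common element for some distinct $m, n \geq 0$, there would exist $v, w \in S$ with $L^m v = L^n w$; after swapping $m$ and $n$ if necessary we may assume $m > n$, and since $L \in \mathrm{SL}_2(\ZZ)$ is invertible we may rewrite this as $L^{m-n} v = w \in S$ with $m - n \geq 1$, contradicting the claim.

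Next I would split $S = S_+ \sqcup S_-$, where $S_+ = \{e_1,\ e_1+e_2,\ e_2\}$ and $S_- = -S_+$. Since every entry of $L$ is non-negative, $L$ preserves each of the closed cones $\RR^2_{\geq 0}$ and $\RR^2_{\leq 0}$; combined with the fact that $L$ commutes with $v \mapsto -v$, this reduces the remaining claim to showing $L^k v \notin S_+$ for $v \in S_+$ and $k \geq 1$.

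The base case $k = 1$ is a direct computation: $L e_1 = (5,2)$, $L(e_1+e_2) = (7,3)$, $L e_2 = (2,1)$, and none of these belongs to $S_+ = \{(1,0),\ (1,1),\ (0,1)\}$. For the inductive step, the key observation is that each of the three images above lies in the sub-semigroup $C := \{(a,b) \in \ZZ^2 : a \geq 1,\ b \geq 1\}$, and that $L$ sends $C$ strictly deeper into itself: if $(a,b) \in C$ then $L(a,b) = (5a+2b,\ 2a+b)$ has first entry $\geq 7$ and second entry $\geq 3$, so $L(a,b) \in C$ and is manifestly not in $S_+$. An easy induction then gives $L^k v \in C \setminus S_+$ for all $k \geq 2$ and all $v \in S_+$, completing the proof of the reduced claim and hence of the lemma.

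The argument is essentially bookkeeping, and I do not anticipate any serious obstacle: the only conceptual ingredients are the reduction step, which uses invertibility of $L$, and the positivity of $L$, which keeps iterates of vectors in $S_+$ firmly inside $C$ and away from $S$. A more quantitative alternative would invoke the dominant eigenvalue $3 + 2\sqrt{2} > 1$ of $L$ to conclude that $\|L^k v\|$ tends to infinity, but this would require checking that each element of $S$ has a non-zero component along the expanding eigenvector of $L$, so the explicit semigroup argument seems cleaner.
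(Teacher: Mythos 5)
Your proof is correct and takes a genuinely different route from the paper's. After the same initial observation that $L$ preserves the closed first quadrant and commutes with negation, the paper splits the remaining work into two independent ingredients: (i) $L$ is diagonalisable with eigenvalues $3\pm 2\sqrt{2}$, which are not roots of unity, so no power of $L$ has eigenvalue $1$, hence the iterates of a \emph{fixed} vector $e_1$, $e_2$, or $e_1+e_2$ are pairwise distinct; and (ii) modulo $2$ the matrix $L$ is the identity, so the iterates of $e_1$, $e_2$, $e_1+e_2$ reduce mod $2$ to $(1,0)$, $(0,1)$, $(1,1)$ respectively, separating iterates of \emph{different} vectors. You instead combine the invertibility reduction (it suffices to show $L^k v\notin S$ for all $v\in S$, $k\geq 1$) with a single semigroup observation: $L$ maps $C=\ZZ_{\geq 1}^2$ into the subset with both coordinates $\geq 3$, and already $L\,S_+=\{(5,2),(7,3),(2,1)\}\subset C\setminus S_+$, so $L^k v\in C\setminus S_+$ for all $k\geq 1$, $v\in S_+$. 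Your argument is more elementary — no eigenvalues, no modular arithmetic — and handles the same-vector and cross-vector cases uniformly; the paper's argument uses two soft general principles that might transfer to a matrix without the clean positivity your approach exploits, but for this particular $L$ your version is the leaner read.
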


\begin{proof}
Since $L$ maps the first quadrant $(\RR_{\geq 0})^2$ to itself, it is enough to consider the iterates of $e_1$, $e_2$ and $e_1+e_2$ under $L$.

The matrix $L$ is diagonalisable and its eigenvalues are $3+2 \sqrt{2}$ and $3-2 \sqrt{2}$, which are not roots of unity.
Therefore no power of $L$ has an eigenvalue which is equal to $1$.
This implies that the iterates of $e_1$ (resp.\ $e_2$, $e_1+e_2$) are all distinct.

In order to conclude it is enough to show that the three sets $\{L^n e_1 \mid n \in \NN \}$, $\{L^n e_1 \mid n \in \NN \}$, $\{L^n (e_1+e_2) \mid n \in \NN \}$ are pairwise disjoint. This is true because modulo $2$ the matrix $L$ becomes the identity, so the reduction modulo $2$ of every iterate of $e_1$ (resp.\ $e_2$, $e_1+e_2$) is $(1,0)$ (resp.\ $(0,1)$, $(1,1)$) in $(\ZZ / 2 \ZZ)^2$.
	\end{proof}

\begin{proof}[Proof of Theorem~\ref{thm:main_many_branches}]
	Fix an arbitrary integer $r \geq 1$.
	Consider the hexagon $H$ which is the convex hull of $(1,0)$, $(1,1)$, $(0,1)$, $(-1,0)$, $(-1,-1)$, $(0,-1)$ in $\ZZ^2$.
	It is well known that $H$ has two maximal Minkowski decompositions: one expresses $H$ as the sum of two triangles and one expresses $H$ as the sum of three unit segments.
	
	Let $L$ be the matrix considered in Lemma~\ref{lem:infinite_iterates}. Consider the Minkowski sum
	\[
	F = H + LH + L^2 H + \cdots + L^r H.
	\]
	By Lemma~\ref{lem:infinite_iterates} the polygon $F$ has $6r+6$ vertices and has unit edges. Moreover, $F$ has at least $2^{r+1}$ distinct maximal Minkowski decompositions.
	
	Let $A = A_F$ be the standard graded algebra associated to $F$ via Construction~\ref{constr:algebra_A_F}.
	Let $Y$ be the isolated Gorenstein toric $3$-fold singularity associated to the cone over $F$. Let $R$ denote the hull of $\Def{Y}$.
	By Theorem~\ref{thm:klaus_versal} $R$ is the completion of $A$ at its maximal homogeneous ideal and $\Spf R$ has at least $2^{r+1}$ irreducible components.
	By Lemma~\ref{lem:minimal_primes_completion} $A$ has at least $2^{r+1}$ minimal primes.
	
	Consider the $3$-dimensional polytope $P = P_F$ as in Construction~\ref{constr:P_F}.
	Since $H$ and $F$ are centrally symmetric, it is clear that $P$ is a prism over $F$, i.e.\ $P$ is the product of $F$ with a segment of lattice length $2$.
	Let $X$ be the toric Fano $3$-fold associated to the face fan of $P$.
	Let $R'$ denote the hull of $\DefqG{X}$.
	From the surjectivity of \eqref{eq:restriction_map_toric} in the proof of Theorem~\ref{thm:main_hull_K-moduli_stack} we deduce that $R'$ has at least $(2^{r+1})^2 = 2^{2r + 2}$ minimal primes. This shows that the K-moduli stack $\stack{3}$ has at least $2^{2r+2}$ local branches at the point corresponding to $X$.
	
	Now we investigate the local structure of the K-moduli space $\modspace{3}$ near the point corresponding to $X$.
	Let $G$ be the automorphism group of $X$, which is reductive by \cite{ABHLX}.
	By \cite{luna_etale_slice_stacks} the completion of the stalk of the structure sheaf of $\modspace{3}$ at $[X]$ is isomorphic to the invariant subring $(R')^G$ of $R'$ under the action of $G$.

Let $T$ denote the big torus $(\CC^*)^3$ acting on the toric variety $X$.
	Since every facet of the polar of $P$ has no interior lattice points, by \cite[Proposition~2.8]{ask_petracci} $G$ is the semidirect product of the torus $T$ with the automorphism group $\Aut(P)$ of the polytope $P$. In our case we have that $\Aut(P)$ is generated by two reflections: one which swaps top and bottom
	\[
	\begin{pmatrix}
	1 & 0 & 0 \\
	0 & 1 & 0 \\
	0 & 0 & -1
	\end{pmatrix}
	\]
	and one which is the central symmetry on the horizontal plane and keeps the vertical direction fixed
		\[
	\begin{pmatrix}
	-1 & 0 & 0 \\
	0 & -1 & 0 \\
	0 & 0 & 1
	\end{pmatrix}.
	\]
	In particular $\Aut(P)$ is isomorphic to $C_2 \times C_2$, where $C_2$ denotes the cyclic group of order $2$.
	
	The map  \eqref{eq:restriction_map_toric} and its section are clearly $T$-equivariant. Therefore, by taking invariants with respect to the action of $T$, there exist two local ring homomorphisms
	\[
	f \colon (R \widehat{\otimes}_\CC R )^T \longrightarrow (R')^T \quad \text{and} \quad g \colon (R')^T \longrightarrow (R \widehat{\otimes}_\CC R )^T
	\]
	such that $g \circ f = \mathrm{id}_{(R \widehat{\otimes}_\CC R )^T}$.
	Let us better understand the ring $(R \widehat{\otimes}_\CC R )^T$. Let $d$ be the embedding dimension of $R$, so that $R$ is a quotient of the power series ring $\CC \pow{t_1, \dots, t_d}$.
	Therefore $R \widehat{\otimes}_\CC R$ is a quotient of $\CC \pow{t_{1,+}, \dots, t_{d,+}, t_{1,-}, \dots, t_{d,-}}$, where $t_{i,+}$ (resp.\ $t_{i,-}$) denotes a coordinate on $\TT^1_{Y_+}$ (resp.\ $\TT^1_{Y_-}$).
	Recall that $Y_+$ and $Y_-$ are the toric affine open subschemes of $X$ corresponding to the top face and to the bottom face of $P$.
	Moreover $\TT^1_{Y_+}$ and $\TT^1_{Y_-}$ are linear representations of the torus $T$, so they split completely as direct sums of characters of $T$.
	By \cite{altmann_versal} $\TT^1_{Y_+}$ (resp.\ $\TT^1_{Y_-}$) is homogeneous of degree $(-1,0,0) \in \ZZ^3$ (resp. $(1,0,0) \in \ZZ^3$).
	By Remark~\ref{rmk:segre_product_and_torus_actions} $(R \widehat{\otimes}_\CC R)^T$ is the completion of the Segre product $A \# A$.
	By Lemma~\ref{lem:irr_comp_segre_product} $A \# A$ has at least $2^{2r+2}$ minimal primes.
	By Lemma~\ref{lem:minimal_primes_completion} $(R \widehat{\otimes}_\CC R)^T$ has at least $2^{2r+2}$ minimal primes.
	From the existence of $f$ and $g$ we deduce that $(R')^T$ has  at least $2^{2r+2}$ minimal primes.
	Since $T$ has index $4$ in $G$, we deduce that $(R')^G$ has at least $2^{2r+2}/4 = 2^{2r}$ minimal primes.
	
	In conclusion, $\stack{3}$ has at least $2^{2r+2}$ local branches at $[X]$ and $\modspace{3}$ has at least $2^{2r}$ local branches at $[X]$.
	This proves Theorem~\ref{thm:main_many_branches} in dimension $n=3$. In higher dimension it is enough to consider the product of $X$ with a projective space.
	\end{proof}

\bibliography{Biblio_murphy}


\end{document}